\documentclass[12pt]{amsart}
\usepackage{amsfonts}
\usepackage{amssymb}
\usepackage{amsmath}
\usepackage{amsthm}
\usepackage{enumerate}
\usepackage[all]{xy}
\usepackage{graphicx}
\usepackage{appendix}
\newtheorem{theorem}{Theorem}
\newtheorem{corollary}{Corollary}

\newtheorem{remark}{Remark}

\newtheorem{example}{Example}
\newtheorem{proposition}{Proposition}

\theoremstyle{definition}
\newtheorem{definition}{Definition}

\def\Qp{\mathbb{Q}_{p}}
\def\Zp{\mathbb{Z}_{p}}

\def\P{\mathbb{P}^{1}(\mathbb{Q}_p)}

\def\Q{\mathbb{Q}}

\begin{document}

\title[]{Oscillating Sequences, Minimal Mean Attractability and Minimal Mean-Lyapunov-Stability}

\author{Aihua Fan and Yunping Jiang}

%\date {July 20, 2007}

\address{Fan Ai-Hua: LAMFA UMR 7352, CNRS \\
Facult\'e des Sciences \\
Universit\'e de Picardie Jules Verne \\
33, rue Saint Leu \\
80039 Amiens CEDEX 1, France}
\email[]{ai-hua.fan@u-picardie.fr}

\address{Yunping Jiang: Department of Mathematics\\
Queens College of the City University of New York\\
Flushing, NY 11367-1597\\
and\\
Department of Mathematics\\
Graduate School of the City University of New York\\
365 Fifth Avenue, New York, NY 10016}
\email{yunping.jiang@qc.cuny.edu}

\subjclass[2010]{Primary 11K65, 37A35, Secondary 37A25, 11N05}

\keywords{oscillating sequence, minimal mean attractable, minimally mean-L-stable, linearly disjoint, Sarnak's conjecture}

\thanks{The first author  is partially supported by NSFC [grant number 11471132]. The second author is partially supported by the collaboration grant from the Simons Foundation [grant number 199837] and the CUNY collaborative incentive research grants [grant number 2013] and awards from PSC-CUNY and a grant from NSFC [grant number 11171121].}

\begin{abstract}
We define oscillating sequences which include the M\"obius function in the number theory.
We also define minimally mean attractable flows and  minimally mean-L-stable flows.
It is proved that all oscillating sequences are linearly disjoint from minimally mean attractable and
minimally mean-L-stable flows. In particular,  that is the case for the M\"obius function.
Several minimally mean attractable and
minimally mean-L-stable flows are examined. 
These flows include the ones defined by all $p$-adic polynomials,  all $p$-adic rational maps with good reduction, all automorphisms of $2$-torus with 
zero topological entropy, all diagonalized affine maps of $2$-torus with zero topological entropy, all Feigenbaum zero topological entropy flows, and all  orientation-preserving  circle homeomorphisms.
\end{abstract}

\maketitle

%\Section 1
\section{\bf Introduction}
Consider a pair $\mathcal{X} = (X,T)$, where $X$ is a compact metric space
with metric $d (\cdot, \cdot)$ and $T: X\to X$ is a continuous map. We call $\mathcal{X}$ a {\em flow} or a {\em dynamical system} because we will consider iterations $\{ T^{n}\}_{n=0}^{\infty}$.
A sequence of complex numbers $\xi(n)$, $n=1, 2,\cdots$,  is said to be {\em observed} in $\mathcal{X}$ if there is a continuous function
$f: X\to \mathbb{C}$ and a point $x \in X$ such that $\xi(n) = f(T^n x)$.

Given a sequence of complex numbers ${\bf c}: =(c_{n})$, $n=1, 2, \cdots$.
We say that it is {\em linearly disjoint} from the flow $\mathcal{X}$ if we have
\begin{equation}\label{disjointness}
   \lim_{N\to \infty} \frac{1}{N}\sum_{n=1}^N c_n \xi(n) =0
\end{equation}
for any observable $\xi(n)$ in $\mathcal{X}$.
An interesting sequence is the M\"obius function $c_{n}:=\mu (n)$. 
Recall that by definition $\mu(1) =1$, $\mu(n)=(-1)^r$ if $n$ is a product of $r$ distinct primes, and $\mu(n)=0$
if  $n$ is not square-free. Sarnak has conjectured that the M\"{o}bius function $\mu(n)$ is linearly disjoint from
all flows with zero topological entropy (see~\cite{Sa1,Sa2}).
This is now called Sarnak's conjecture, which remains open in its generality although it is proved in several
special cases~\cite{Bou,BSZ,GT,Kar,LS,MR}.  
As pointed out in~\cite{Sa1,Sa2}, this conjecture has
a connection with many important problems in number theory, for example, the Riemann hypothesis.
More recently, in~\cite{Ge}, Ge studied 
Sarnak's conjecture in association with a $C^*$ algebra and its maximal
ideal space and in~\cite{EKLR},  which contains a good survey of the subject and a rather complete list of refereces, El. Abdalaoui et al.  considered 
the Sarnak conjecture for sequences of numbers in $\{-1,0,1\}$ by comparing  several natural generalizations. 

An important property satisfied by the M\"obius function is
\begin{equation}~\label{pnt}
\sum_{n=1}^{N} \mu (n) =o(N)
\end{equation}
which is equivalent to the prime number theorem that the number of prime numbers less than or equal to $N$ is approximately $N/\ln N$.
A stronger statement (conjecture) is that for $\epsilon >0$
\begin{equation}~\label{pnt}
\sum_{n=1}^{N} \mu (n) =O_{\epsilon}(N^{\frac{1}{2}+\epsilon})
\end{equation}
which is equivalent to the Riemann hypothesis.
Another property satisfied by the M\"obius function is Davenport's theorem~\cite{Da} that
\begin{equation}\label{daven}
\sum_{n=1}^{N} \mu (n) e^{2\pi i \alpha n} = o(N)
\end{equation}
for any real number $0\leq \alpha <1$
and Hua's theorem~\cite{Hua}
\begin{equation}\label{hua}
\sum_{n=1}^{N} \mu (n) e^{2\pi i \alpha n^{k}} = o(N)
\end{equation}
for any real number $0\leq \alpha <1$ and any integer $k\geq  2$.
The reader can refer to~\cite{LZ} for more delicate estimates for these sums for $k\geq 1$.

In this paper,  we will not only consider the M\"obius function but also
a sequence of complex numbers ${\bf c}=(c_n)$ satisfying the oscillating condition
\begin{equation}\label{oscillating}
\sum_{n=1}^{N} c_n e^{-2\pi i n t} =o_{t} (N), \quad \forall t \in [0, 1),
\end{equation}
 and the growth condition
\begin{equation}\label{growth}
   \sum_{n=1}^{N} |c_n|^{\lambda} =O(N)
\end{equation}
for some $\lambda>1$
(such a sequence ${\bf c}$ will be called an oscillating sequence).
There are many arithmetic functions, including
the M\"obius function and the Liouville function, which are oscillating sequences  (see~\cite{DD,De}).
Recall that the Liouville function $l(n)$ is defined as $l(n) =(-1)^{\Omega (n)}$ where $\Omega(n)$ is 
the number of prime factors of $n$, counted with multiplicity. 
On the other hand,  we will define a minimally mean attractable flow and a minimally mean-L-stable flow.
We will prove that all oscillating sequences are linearly disjoint from all minimally mean attractable and minimally
mean-L-stable flows. We will prove that $p$-adic polynomial flows, $p$-adic rational flows with good reduction,  automorphism flows 
with zero topological entropy on $2$-torus, all diagonalized affine flows of $2$-torus with zero topological entropy, all Feigenbaum zero topological 
entropy flows and all orientation-preserving circle homeomorphism flows are all 
minimally mean attractable and minimally mean-L-stable flows. 

We organize our paper as follows. In Section 2, we define oscillating sequences (Definition~\ref{os}). In Section 3, we define minimally mean-L-stable flows (Definition~\ref{mmls}) and minimal mean attractable flows (Definition~\ref{mma}).
We will also review mean-L-stable flows and
its equivalent notion, mean-equicontinuous flows.
In Section 4, we prove that all oscillating sequences are linearly disjoint from all  minimal mean attractable and minimally mean-L-stable flows (Theorem~\ref{main1}).
In particular, as an example, we have that the M\"obius function is linearly disjoint from all  minimal mean attractable and minimally mean-L-stable flows (Corollary~\ref{scmmammls}).
As another example,  we have that all oscillating sequences
are linearly disjoint from all equicontinuous flows (Corollary~\ref{M2}).
In Section 5, we discuss $p$-adic polynomial flows and $p$-adic rational flows.
All polynomials with $p$-adic integral coefficients define equicontinuous flows on the ring of $p$-adic integers.
Thus they are minimal mean attractable and minimally mean-L-stable flows. 
All $p$-adic rational maps with good reduction define minimal mean attractable and minimally mean-L-stable flows on the projective line $\mathbb{P}^1(\mathbb{Q}_p)$.
Therefore,  all oscillating sequences are linearly disjoint from all these $p$-adic polynomial flows and  $p$-adic rational flows
(Corollary~\ref{pp} and Corollary~\ref{rp}). 
In Section 6, we explain how to use our method to study affine maps and automorphisms of $2$-torus with zero topological entropy.
We first prove that in the diagonalized case, all oscillating sequences are linearly disjoint from all flows defined by affine maps of $2$-torus with zero topological entropy (Corollary~\ref{diagcoro})
since these flows are equicontinuous (Proposition~\ref{diag}).  
We then prove that all flows defined by automorphisms of $2$-torus with zero topological entropy are minimal mean attractable 
and minimally mean-L-stable flows~\label{nondiagthm} (they are not equicontinuous).
Thus all oscillating sequences are linearly disjoint from all flows defined by automorphisms of $2$-torus with zero topological entropy (Corollary~\ref{nondiagcoro}). Furthermore, we give  an affine map of $2$-torus with zero topological entropy and  an oscillating sequence such that this oscillating sequence is not linearly disjoint from the flow defined by this affine map. Notice that Liu and Sarnak has proved that the M\"obius function is linearly disjoint from this example in~\cite{LS}. We further note that in order to be linearly disjoint from all flows with zero topological entropy, an oscillating sequence should not only be oscillating in the first order but also be oscillating for any higher order (Remark~\ref{higher}) and the M\"obius function has this property as shown by Hua in~\cite{Hua} (see Equation (\ref{hua})). 
In Section 7, we prove that all Feigenbaum zero topological entropy flows are minimal mean attractable and minimally mean-L-stable flows. 
Thus all oscillating sequences are linearly disjoint from all Feigenbaum zero topological entropy flows (Theorem~\ref{main2}). 
In Section 8, we prove that Denjoy counter-examples are minimal mean attractable but not equicontinuous even 
when restricted to their non-wandering sets, which are minimal subsets. However, we  prove that Denjoy counter-examples 
are minimally mean-L-stable (Theorem~\ref{ceq}). In the proof of this theorem, we first prove that the flow defined by  
a Denjoy counter-example is minimally mean-L-stable when its non-wandering set   
has zero Lebesgue measure,  and then we show that every Denjoy counter-example is conjugate to a 
Denjoy counter-example whose non-wandering set has zero Lebesgue measure (an obseravtion pointed out to us by Davit Karagulyan).   
Thus all oscillating sequences are linearly disjoint from all flows defined by Denjoy counter-examples (Corollary~\ref{scdenjoy}).
The case of M\"obius sequence was already considered by Karagulyan \cite{Kar} and now is one of the consequences of Theorems~\ref{main1} and~\ref{ceq} (Corollary~\ref{Kar}).
%In the same section, we point out that %all orientation-preserving rational circle %homeomorphisms and irrational circle %homeomorphism conjugating to rigid rotations %are trivial examples of minimally mean %attractable and minimally mean-L-stable flows. 

\medskip
\medskip
\noindent {\em Acknowledgement.} We would like to thank Professors Hedi Daboussi, Liming Ge,  Davit Karagulyan,  Jianya Liu, J\"{o}rg Schmeling,  Jie Wu, Xiangdong Ye and Enrique Pujals  for sharing information and having many interesting discussions with us. We would also like to thank Professor Peter Sarnak for his encouragement comments on our initial version of this paper. 
The work was started when the first  author was visiting Lund University and he would like to thank the Knut and Alice 
Wallenberg Foundation for its support.  Both authors would like to thank 
the Academy of Mathematics and Systems Science at the Chinese Academy of Sciences for its hospitality when they visited there and worked on this project.

%Section 2
 \section{\bf Oscillating Sequences}

 %We begin this section with the formal definition of an oscillating sequence.
 In classical analysis (see~\cite{Kahane}), a sequence of complex numbers $(u_n)_{n\in \mathbb{Z}}$ is called a generalized almost
periodic sequence if for any $t\in [0, 1)$ the following limit exists
$$
c(t) =\lim_{N\to \infty} \frac{1}{2N+1}\sum_{n=-N}^N u_n e^{- 2\pi i n t}.
$$
Almost periodic and generalized almost periodic sequence were studied by Bohr, Hartman, et al.
 %provide many examples of almost periodic sequences.
 The spectrum of a generalized almost periodic sequence is the set of $t\in[0, 1)$ such that $c(t)\not=0$.  In general, the spectrum is countable.
We are interested in generalized almost periodic sequence with empty spectrum defined on positive integers. Thus we introduce the following definition.

\medskip
\begin{definition}~\label{os}
Let ${\bf c}:=(c_n)$, $n=1, 2, \cdots$, be a sequence of complex numbers.
We say that ${\bf c}$ is an {\em oscillating sequence} if
the Cesaro means
$$
\sigma_N(t):= \frac{1}{N}\sum_{n=1}^{N} c_n e^{-2\pi i n t}
$$
converge to zero as $N$ tends to the infinity for every $0\leq t <1$.
 %and if ${\bf c}=(c_{n})$ satisfies the growth condition (\ref{growth}).
\end{definition}

\medskip
\begin{remark}
All oscillating sequences in this paper are also assumed satisfying the growth condition (\ref{growth}).
\end{remark}

\medskip
If we only assume that $\lim \sigma_N(\alpha)$ exists but is not necessarily zero for every
$0\le t <1$, %, we could say that ${\bf c}=(c_n)$ is semi-oscillating sequence.
we denote
$$
 \mathcal{Z}({\bf c}) = \left\{t \in [0,1)\;|\;  \lim_{N\to \infty} \sigma_N(t)=0\right\}.
$$
So, the statement that ${\bf c}$ is an oscillating sequence means
$
\mathcal{Z}({\bf c})=[0, 1).
$
%and ${\bf c}$ satisfies the growth condition (\ref{growth}).
The complementary of $\mathcal{Z}({\bf c})$ is defined to be the spectrum of {\bf c}.

Let us give several examples of oscillating sequences.

\medskip
\begin{example}~\label{ex1}
The sequence of complex numbers
$
{\bf c} =\big( e^{ 2\pi i n \alpha}\big)
$
for a fixed $0\leq \alpha <1$ is %a semi-oscillating sequence but
not an oscillating sequence, because
$
\mathcal{Z}({\bf c})= [0, 1)\setminus \{ \alpha\}.
$
Note that the spectrum is one-point set $\{\alpha\}$.
\end{example}

\medskip
\begin{example}~\label{ex2}
The sequence of complex numbers
$
{\bf c}= \big( e^{2\pi i c n \log n}\big),  \;\; c >0,
$
is an oscillating sequence because the Cesaro means
$$
\sigma_N(t)= \frac{1}{N}\sum_{n=1}^N e^{2\pi i c n \log n} e^{-2\pi i n t} = O\Big(\frac{1}{\sqrt{N}}\Big)
$$
uniformly on $0\leq t <1$ (\cite[Vol. 1, p. 197]{Z}).
%It satisfies clearly the growth condition (\ref{growth}).
\end{example}

\medskip
\begin{example}~\label{ex3}
The sequence of complex numbers
$
{\bf c} = \big( e^{2\pi i n^2 \alpha}\big)
$
for any fixed irrational $\alpha$ is an oscillating sequence
because
the sequence
$$
\{n^2\alpha - n t   \pmod{1}   \}_{n=0}^{\infty} 
$$
is uniformly distributed on the unit interval $[0,1]$ for any fixed $0\leq t <1$.
So the Cesaro means
$$
\sigma_N(t)= \frac{1}{N}\sum_{n=1}^N e^{2\pi i  (n^2\alpha - n t)}=o_{t} (1).
$$
%It satisfies clearly the growth condition (\ref{growth}).
\end{example}

\medskip
\begin{proposition}\label{n2}
 Consider the sequence of complex numbers
$
{\bf c} : = \big( e^{2\pi i n^2 \alpha}\big)
$
where  $0\leq \alpha =p/q<1$ is a rational number with $(p, q)=1$.   The spectrum
 of ${\bf c}$ is the set of rational numbers $\frac{r}{s}$  such that
 $s|q$ and
 $$
 \sum_{k=0}^{q-1} e^{2\pi i (k^2p/q + k r/s)}\not=0.
 $$
\end{proposition}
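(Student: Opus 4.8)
The plan is to exploit the fact that $e^{2\pi i n^2 p/q}$ depends only on $n \bmod q$, so that the Cesàro sum $\sigma_N(t)$ splits into $q$ geometric progressions. First I would note that if $n \equiv k \pmod q$, writing $n = k + mq$ gives $n^2 p/q = k^2 p/q + (2km + m^2 q)p \equiv k^2 p/q \pmod 1$, so the quadratic phase is constant on each residue class. Hence
$$\sigma_N(t) = \frac{1}{N}\sum_{k=0}^{q-1} e^{2\pi i k^2 p/q}\, S_{N,k}(t), \qquad S_{N,k}(t) := \sum_{\substack{1\le n\le N\\ n\equiv k\,(q)}} e^{-2\pi i n t}.$$
Each $S_{N,k}(t)$ is a geometric progression in the terms $n = k, k+q, k+2q,\dots$ with common ratio $\rho = e^{-2\pi i q t}$, so the entire analysis reduces to whether $\rho = 1$, i.e. whether $qt \in \Z$.

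Next I would split into two cases. If $qt \notin \Z$ --- equivalently, $t$ is irrational, or $t = a/b$ in lowest terms with $b \nmid q$ --- then $\rho \neq 1$ and each $S_{N,k}(t)$ is bounded by $2/|1-\rho|$ uniformly in $N$; dividing by $N$ yields $\sigma_N(t) \to 0$, so such $t$ lie in $\mathcal{Z}({\bf c})$ and are absent from the spectrum. This already forces the spectrum into the set of $t$ with $qt \in \Z$, that is, $t = r/s$ with $s \mid q$. If instead $qt \in \Z$, write $t = r/s$ in lowest terms with $s\mid q$; then $e^{-2\pi i n t}$ equals the constant $e^{-2\pi i k r/s}$ on the class $n \equiv k$, whence $S_{N,k}(t) = e^{-2\pi i k r/s}\,\#\{1\le n\le N : n\equiv k\,(q)\} \sim e^{-2\pi i k r/s}\,N/q$. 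Passing to the limit,
$$\lim_{N\to\infty}\sigma_N(t) = \frac{1}{q}\sum_{k=0}^{q-1} e^{2\pi i(k^2 p/q - k r/s)},$$
which is nonzero precisely when the displayed Gauss-type sum is nonzero. This gives the characterization of the spectrum, up to the sign in the exponent.

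The one point requiring care is reconciling the $-kr/s$ produced by this computation with the $+kr/s$ appearing in the statement. I would settle it by observing that, as $r/s$ ranges over the reduced fractions in $[0,1)$ with $s\mid q$, the substitution $r \mapsto s-r$ is a bijection of this index set which carries $\sum_{k} e^{2\pi i(k^2 p/q - k r/s)}$ to $\sum_{k} e^{2\pi i(k^2 p/q + k r/s)}$; consequently the two conditions single out exactly the same subset of $[0,1)$, so the statement as written is correct. The remaining ingredients --- the uniform geometric bound in the first case and the equidistribution count $\#\{n\le N : n\equiv k\,(q)\}\sim N/q$ in the second --- are routine, so the only genuinely delicate issues are this sign bookkeeping together with the clean dichotomy governed by whether $qt\in\Z$.
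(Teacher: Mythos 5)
Your decomposition is exactly the paper's: group $n$ by its residue class mod $q$, observe that $e^{2\pi i n^2 p/q}$ is constant on each class, and reduce each $S_{N,k}(t)$ to a geometric progression whose behaviour is governed by whether $qt\in\Z$. The dichotomy, the uniform bound $2/|1-\rho|$ when $qt\notin\Z$, and the limit $\frac{1}{q}\sum_{k=0}^{q-1}e^{2\pi i(k^2p/q-kr/s)}$ when $t=r/s$ with $s\mid q$ are all correct. (The paper is itself cavalier about the same sign: its displayed identity carries $+kt$ and $+mqt$ where a direct expansion of $e^{-2\pi i nt}$ gives minus signs.)

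The one flaw is your sign reconciliation. Write $f(r/s)=\sum_{k}e^{2\pi i(k^2p/q-kr/s)}$ and $g(r/s)=\sum_{k}e^{2\pi i(k^2p/q+kr/s)}$. Your substitution $r\mapsto s-r$ shows $f\bigl((s-r)/s\bigr)=g(r/s)$, hence that the sets $\{t: f(t)\neq0\}$ and $\{t: g(t)\neq0\}$ are images of each other under the reflection $t\mapsto 1-t$ --- not that they are equal. A bijection of the index set interchanging the two conditions does not by itself make the two conditions cut out the same subset; you would still need to know that one of these sets is invariant under $t\mapsto 1-t$. The correct (and shorter) observation is that the two sums are literally equal at every $t=r/s$ with $s\mid q$: substituting $k\mapsto q-k$ inside the sum gives $(q-k)^2p/q\equiv k^2p/q\pmod 1$ and $-(q-k)r/s=-qr/s+kr/s\equiv kr/s\pmod 1$, where $s\mid q$ is used to see that $qr/s\in\Z$; hence $f(r/s)=g(r/s)$. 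With that one-line replacement your proof is complete and coincides with the paper's.
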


\begin{proof}
%Assume $0\le \alpha <1$ is irrational. Then $\big( e^{2\pi i n^2 \alpha}\big)$
%is Hartman-oscillating just because
% $\{n^2\alpha - n t\}$ is uniformly distributed for any $0\leq t <1$.
% Now assume $\alpha =p/q$ is rational.
We have
$$
\lim_{N\to \infty} \frac{1}{N}  \sum_{n=1}^N e^{2\pi i n^2 \alpha} e^{-2\pi i n t}
   =\sum_{k=0}^{q-1} e^{2\pi i (k^2p/q + k t)} \lim_{N\to \infty} \frac{1}{N}   \sum_{m: mq + k \le N}
    e^{2\pi i mq t}.
$$
If $t$ is irrational or if $t=r/s$ with $s\not|q$, the last sum is bounded and the above limit is zero.
Then such a $t$ is not in the spectrum. If $s|q$, we have
$$
\lim_{N\to \infty} \frac{1}{N}  \sum_{n=1}^N e^{2\pi i n^2 \alpha} e^{-2\pi i n r/s}
   =\frac{1}{q}\sum_{k=0}^{q-1} e^{2\pi i (k^2p/q + k r/s)}.
$$
\end{proof}

It is easy to check that
$$
   \mathcal{Z}(\{ e^{2\pi i n^2 \cdot \frac{1}{2}} \}) =[0, 1)\setminus \big\{ \frac{1}{2}\big\},
$$
$$
    \mathcal{Z}(\{ e^{2\pi i n^2 \cdot \frac{1}{3}} \})= \mathcal{Z}(\{ e^{2\pi i n^2 \cdot \frac{2}{3}} \}) =[0, 1)\setminus \big\{ 0, \frac{1}{3}, \frac{2}{3}\big\},
$$
$$
    \mathcal{Z}(\{ e^{2\pi i n^2 \cdot \frac{1}{4}} \}) = \mathcal{Z}(\{ e^{2\pi i n^2 \cdot \frac{3}{4}} \}) =[0, 1)\setminus \big\{0,  \frac{1}{2}\big\}.
$$

The following example is due to Davenport~\cite{Da} (see also~\cite{GT2}).
 %and important for Sarnak's conjecture.

\medskip
\begin{example}~\label{ex4}
%The sequence of numbers defined by
The M\"obius function $\big(\mu (n)\big)$ is an oscillating sequence.
%as we have already mentioned in (\ref{daven}) by taking the complex conjugation and because it satisfies clearly the growth condition (\ref{growth}).
Actually a stronger result holds for the Cesaro means
$$
\sigma_N (t) =\frac{1}{N} \sum_{n=1}^{N} \mu(n) e^{- 2\pi i n t} = O\Big(\frac{1}{\log^h N}\Big)
$$
for any $h>0$ where the estimate is uniform on $0\leq t <1$.
\end{example}

Another example can be obtained from the following proposition.
Let $(\xi_n)$ be a sequence of independent and identically
distributed real random variables such that % the expectations
$$
%\mathbb{E} e^{\lambda \xi_{n}}=
\mathbb{E} e^{\lambda \xi_1}\le e^{\frac{\lambda^2}{2}}, \quad \forall \lambda \in \mathbb{R}.
 %\;\;\forall n\geq 1
$$
Such a sequence is called a subnormal sequence.

\medskip
\begin{proposition}~\label{ex5}
Suppose $(\xi_{n}$) is a subnormal sequence.  Let $(u_n)$ be a sequence of real numbers such that
$u_n = O(n^\tau)$ for some $0<\tau<1/2$. Then almost surely the random sequence
${\bf c}: = (u_n \xi_n)$ is an oscillating sequence.
\end{proposition}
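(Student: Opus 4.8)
The plan is to prove the stronger statement that, almost surely, $\sigma_N(t)\to 0$ \emph{uniformly} in $t\in[0,1)$; this certainly implies that $\mathbf{c}=(u_n\xi_n)$ is an oscillating sequence. The engine behind everything is that the subnormal hypothesis makes each $\xi_n$ subgaussian with variance proxy $1$, so that for fixed real coefficients $a_n$ the sum $\sum_n a_n\xi_n$ is again subgaussian with proxy $\sum_n a_n^2$, i.e. $\mathbb{E}\exp(\lambda\sum_n a_n\xi_n)\le\exp(\tfrac{\lambda^2}{2}\sum_n a_n^2)$ by independence. Writing $S_N(t)=N\sigma_N(t)=\sum_{n=1}^N u_n\xi_n e^{-2\pi int}$ and splitting into real and imaginary parts, whose coefficients $u_n\cos 2\pi nt$ and $u_n\sin 2\pi nt$ have squares bounded by $u_n^2$, both parts are subgaussian with proxy at most $V_N:=\sum_{n=1}^N u_n^2$. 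The growth hypothesis $u_n=O(n^\tau)$ gives $V_N=O(N^{2\tau+1})$.

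First I would fix $t$ and apply the standard subgaussian tail bound to each part, obtaining
\[
\mathbb{P}\big(|S_N(t)|\ge N\varepsilon\big)\le 4\exp\!\Big(-\frac{N^2\varepsilon^2}{4V_N}\Big)\le 4\exp\!\big(-c\,\varepsilon^2 N^{1-2\tau}\big).
\]
Here the hypothesis $\tau<1/2$ is decisive: it makes the exponent $1-2\tau$ positive, so the bound decays faster than any power of $N$, which is what permits a union bound. I would then choose a grid $G_N=\{j/N^2 : 0\le j<N^2\}$ and sum the tail estimate over its $N^2$ points; since $N^2\exp(-c\varepsilon^2N^{1-2\tau})$ is summable in $N$ for each fixed $\varepsilon>0$, Borel--Cantelli yields that almost surely $\max_{g\in G_N}|\sigma_N(g)|\to 0$.

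To upgrade from the grid to every $t\in[0,1)$ I would use a Lipschitz interpolation. Each $\sigma_N$ is a trigonometric polynomial with
\[
|\sigma_N(t)-\sigma_N(s)|\le \frac{2\pi|t-s|}{N}\sum_{n=1}^N n|u_n||\xi_n| =: L_N|t-s|,
\]
and for any $t$ the nearest grid point $g$ satisfies $|t-g|\le N^{-2}$, so $|\sigma_N(t)|\le \max_{g\in G_N}|\sigma_N(g)|+L_N N^{-2}$. It remains to show $L_N N^{-2}\to 0$ almost surely. Since $\mathbb{E}|\xi_1|<\infty$ one has $\mathbb{E}L_N=O(N^{1+\tau})$, hence the deterministic mean $\mathbb{E}[L_N N^{-2}]=O(N^{\tau-1})\to 0$; centering $L_N$ and estimating the variance of $\sum_{n}n^{1+\tau}(|\xi_n|-\mathbb{E}|\xi_n|)$ by $O(N^{3+2\tau})$ gives $\mathrm{Var}(L_N N^{-2})=O(N^{2\tau-3})$, which is summable, so Chebyshev plus Borel--Cantelli forces $L_N N^{-2}\to 0$ almost surely. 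Combining the two displays shows $\sup_t|\sigma_N(t)|\to 0$ on a single probability-one event.

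The main obstacle is precisely the exchange of quantifiers: the naive pointwise argument produces, for each $t$, a null set off which $\sigma_N(t)\to 0$, but these null sets are uncountable in number and their union need not be null. The discretization-plus-Lipschitz scheme resolves this, and the two genuinely delicate points are (i) making the grid fine enough (spacing $N^{-2}$) that $L_N|t-g|$ is negligible while keeping it coarse enough ($N^2$ points) that the union bound survives, which is possible only because $\tau<1/2$ controls the tradeoff between degree and coefficient size, and (ii) controlling the \emph{random} Lipschitz constant $L_N$ almost surely, where the finiteness of the moments of the subgaussian $\xi_n$ is used. An alternative, slicker route would invoke a Salem--Zygmund sup-norm bound $\mathbb{E}\sup_t|S_N(t)|\lesssim\sqrt{\log N}\,V_N^{1/2}=o(N)$ together with concentration, but the elementary grid argument is self-contained and avoids that machinery.
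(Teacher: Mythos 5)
Your argument is correct, but it takes a genuinely different route from the paper. The paper's proof is essentially a one-line appeal to the Salem--Zygmund sup-norm inequality for random trigonometric polynomials (citing Kahane's book, with the remark that the Rademacher case treated there extends verbatim to subnormal coefficients), which immediately yields the uniform rate $\sigma_N(t)=O\bigl(\sqrt{\log N}\,/\,N^{1/2-\tau}\bigr)$. You instead build a self-contained chaining-in-miniature argument: the pointwise subgaussian tail bound $\exp(-c\varepsilon^2 N^{1-2\tau})$, a union bound over a grid of $N^2$ points, Borel--Cantelli, and a Lipschitz interpolation whose random Lipschitz constant $L_N$ you control by Chebyshev plus a second Borel--Cantelli. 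All the estimates check out: the subnormal hypothesis does give variance proxy $\sum a_n^2$ for independent sums, $V_N=O(N^{1+2\tau})$ makes the exponent $N^{1-2\tau}$ beat the polynomial number of grid points precisely because $\tau<1/2$, and $\mathrm{Var}(L_N N^{-2})=O(N^{2\tau-3})$ is summable. What the paper's approach buys is brevity and an explicit near-optimal uniform rate; what yours buys is complete self-containment (no black-box citation) and transparency about exactly where $\tau<1/2$ and independence are used. You correctly identify the quantifier exchange over uncountably many $t$ as the real issue --- indeed both proofs establish the stronger uniform statement for exactly this reason --- and you even note the Salem--Zygmund shortcut as an alternative, which happens to be the route the authors actually take.
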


\begin{proof}
Actually, as a consequence of the Salem-Zygmund inequality on the uniform estimate
of random trigonometric polynomials and as we refer to~\cite[p.73]{Kahane}, where
the Rademacher random sequence was considered but the proof for subnormal sequences is the same,
we have almost surely the Cesaro means
$$
\sigma_N (t) =\frac{1}{N} \sum_{n=1}^{N} u_n \xi_n e^{-2\pi i n t} = O\Big(\frac{\sqrt{\log N}}{N^{\frac{1}{2}-\tau}}\Big)$$
where the estimate is uniform on $0\leq t <1$.
This proves the proposition.
\end{proof}

From \cite{FS} we can get other  oscillating sequences similar to those in Proposition~\ref{ex5}.
The next example is due to Daboussi-Delange \cite[p. 254]{DD}.

\medskip
\begin{proposition}~\label{ex6}
Let ${\bf c}=(c_n)$ be a multiplicative arithmetical function such that
$|c_n|\le 1$ for all $n$.  Then ${\bf c}$ is an oscillating sequence if and only if
for any Dirichlet character $\chi$ and and real number $u$, we have
$$
\sum_{p }
    \frac{1}{p} \Big( 1 - \mbox{\rm Re} \ \big( \chi(p) f(p) p^{-i u} \big) \Big) =\infty
$$
where the sum is taken over prime numbers $p$.
\end{proposition}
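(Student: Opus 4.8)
The plan is to reduce the statement to two classical pillars of the theory of mean values of multiplicative functions — Daboussi's theorem on multiplicative functions twisted by additive characters at irrational points, and the Halász--Wirsing--Delange mean value theorem — glued together by finite Fourier analysis through Gauss sums. Writing $e(x):=e^{2\pi i x}$, recall that $\mathbf{c}$ is oscillating precisely when $\sigma_N(t)=\frac{1}{N}\sum_{n=1}^N c_n e(-nt)\to 0$ for every $t\in[0,1)$. I would first split the range of $t$ into irrational and rational points, since the two regimes are governed by completely different mechanisms.

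For irrational $t$ I would invoke Daboussi's theorem: for any multiplicative $\mathbf{c}$ with $|c_n|\le 1$ and any irrational $t$, one has $\sigma_N(t)\to 0$ unconditionally. Thus irrational points never lie in the spectrum and impose no constraint, and the whole content of the proposition is carried by the rational points. For $t=a/q$ with $(a,q)=1$, the additive character $n\mapsto e(-na/q)$ is $q$-periodic, and I would expand it in Dirichlet characters modulo $q$ by means of Gauss sums. Concretely, for a primitive character $\chi$ one has $\chi(n)\tau(\overline{\chi})=\sum_{b \bmod q}\overline{\chi(b)}\,e(bn/q)$, which after inversion expresses $e(-na/q)$, restricted to $(n,q)=1$, as a finite linear combination of the $\chi(n)$. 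Since $c_n\chi(n)$ vanishes when $(n,q)>1$, this cleanly rewrites $\sigma_N(a/q)$ as a linear combination of the character-twisted means $\frac{1}{N}\sum_{n\le N}c_n\chi(n)$ over $\chi \bmod q$, with the residues not coprime to $q$ contributing a negligible density-reducing term that also tends to zero.

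Now each $c_n\chi(n)$ is again multiplicative with modulus at most one, so I would apply the Halász--Wirsing--Delange theorem: $\frac{1}{N}\sum_{n\le N}c_n\chi(n)\to 0$ if and only if for every real $u$ the series $\sum_p \frac{1}{p}\bigl(1-\mathrm{Re}(c_p\chi(p)p^{-iu})\bigr)$ diverges; when it converges for some $u$, the function pretends to be $n^{iu}$ and the mean value, by the explicit main term of Halász's theorem, fails to tend to zero. Assembling the two directions, if the divergence condition holds for all $\chi$ and all $u$ then every character-twisted mean vanishes, hence $\sigma_N(a/q)\to 0$ for all rationals, and together with the irrational case $\mathbf{c}$ is oscillating. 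Conversely, if the condition fails for some primitive $\chi \bmod q$ and some $u$, then $\frac{1}{N}\sum_{n\le N}c_n\chi(n)\not\to 0$; reading the Gauss-sum relation backwards shows that $\sigma_N(-b/q)$, for at least one residue $b$ and taken modulo $1$, fails to tend to zero, so that rational point lies in the spectrum and $\mathbf{c}$ is not oscillating.

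The main obstacle is the rational case, and within it the careful invocation of Halász's theorem in its quantitative form: one needs not merely the dichotomy ``mean value zero versus nonzero'' but the nonvanishing of the explicit main term in the pretentious regime, so that a convergent series for some pair $(\chi,u)$ genuinely produces a point of the spectrum. Equally delicate is the two-way bookkeeping between the additive characters $e(\cdot\, a/q)$ and the multiplicative characters $\chi$ through Gauss sums, including the reduction to primitive characters and the control of the residues with $(n,q)>1$. By contrast the irrational range is a direct citation of Daboussi's theorem and requires no further work.
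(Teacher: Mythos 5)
First, a point of comparison: the paper offers no proof of this proposition at all --- it is stated as a quotation of Daboussi--Delange \cite[p.~254]{DD} --- so your sketch is really a reconstruction of the argument of the cited reference. The architecture you propose (Daboussi's theorem at irrational $t$, finite Fourier analysis at rational $t$, Hal\'asz--Delange for the twisted means) is indeed the standard one and is essentially how Daboussi and Delange proceed; the irrational half is fine as a citation.

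However, the rational half of your sketch has two genuine gaps. (1) You dismiss the integers $n$ with $(n,q)>1$ as ``a negligible density-reducing term that also tends to zero.'' They are neither negligible nor individually small: they have positive density and $c_n$ is unconstrained on them. Already for $c_n\equiv 1$ and $t=1/2$ the even $n$ contribute $\frac{1}{N}\sum_{n\le N,\,2\mid n}e^{-\pi i n}=\frac{1}{N}\,\sharp\{n\le N: 2\mid n\}\to \frac12$, and it is only the cancellation against the odd part that makes $\sigma_N(1/2)\to 0$ there. The correct device is to write $n=dn'$ with $d$ the largest divisor of $n$ supported on the primes dividing $q$ and $(n',q)=1$, use $c_n=c_dc_{n'}$, reduce $da/q$ to lowest terms $a'/q'$, expand over characters modulo $q'$ for each divisor $q'$ of $q$, and control the tail in $d$ by $\sum_{d\mid q^{\infty}}d^{-1}<\infty$; this is also why the hypothesis must range over characters to all moduli, not just those modulo $q$. (2) Your invocation of Hal\'asz is too strong: convergence of $\sum_p p^{-1}\big(1-\mathrm{Re}(c_p\chi(p)p^{-iu})\big)$ for some $u$ does not imply $\frac1N\sum_{n\le N}c_n\chi(n)\not\to 0$, because the Euler factor at $p=2$ in the Hal\'asz--Delange main term, $1+\sum_{k\ge1}c_{2^k}\chi(2^k)2^{-k(1+iu)}$, can vanish. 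The multiplicative function $g$ with $g(2^k)=-1$ and $g(p^k)=1$ for odd $p$ (i.e.\ $g(n)=(-1)^{n+1}$) has $\sum_p p^{-1}(1-\mathrm{Re}\,g(p))=1<\infty$ for the trivial character and $u=0$, yet $\frac1N\sum_{n\le N}g(n)\to 0$; its failure to oscillate is only detected at $t=1/2$ through the character modulo $2$. So the converse direction cannot be run one character at a time: one must show that the full linear combination over $d\mid q^{\infty}$ and over characters to divisors of $q$ has a nonzero limit at some rational point, which is where the real work in Daboussi--Delange lies. Until both points are repaired, the ``only if'' direction in particular is not established.
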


This classical result of Daboussi and Delange is generalized to
 asymptotic
orthogonality of multiplicative functions to “irrational
” nilsequences by N. Frantzikinakis and B. Host \cite{FH2014}.

The oscillating sequences shares the following property that arithmetic subsequences of an oscillating sequence are oscillating.

\begin{proposition} Suppose that ${\bf c}= (c_{n})$ is an oscillating
	sequence. Then for any $q\ge 2$ and any $r=1, 2, \cdots, q$
	we have 
	\begin{equation}\label{c-mod-q}
	\lim_{N\to \infty} \frac{1}{N} \sum_{\stackrel{1\le n\le N}{n\equiv r\, (\!\!\!\!\!\!\mod q)}}
	c_{n}e^{-2\pi i t n} =0, \quad \forall t \in [0, 1).
	\end{equation}
\end{proposition}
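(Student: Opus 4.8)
The plan is to reduce the sum over the arithmetic progression $n\equiv r\ (\mathrm{mod}\ q)$ to a finite linear combination of full Cesàro means $\sigma_N$ evaluated at finitely many shifted frequencies, and then to invoke the oscillating hypothesis at each of those frequencies separately.

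First I would detect the congruence condition using additive characters modulo $q$ via the orthogonality relation
$$
\mathbf{1}_{\{n\equiv r\ (\mathrm{mod}\ q)\}} = \frac{1}{q}\sum_{j=0}^{q-1} e^{2\pi i j (n-r)/q}.
$$
Substituting this into the left-hand side of (\ref{c-mod-q}) and interchanging the two finite sums yields
$$
\frac{1}{N}\sum_{\stackrel{1\le n\le N}{n\equiv r\ (\mathrm{mod}\ q)}} c_n e^{-2\pi i t n}
= \frac{1}{q}\sum_{j=0}^{q-1} e^{-2\pi i j r/q}\,\frac{1}{N}\sum_{n=1}^{N} c_n e^{-2\pi i (t-j/q) n}.
$$
The key algebraic step is simply $e^{-2\pi i t n} e^{2\pi i j n/q} = e^{-2\pi i (t-j/q) n}$, which turns each inner average into a value of the Cesàro mean at a translated frequency.

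Next I would recognize each inner average as a value of $\sigma_N$. For each fixed $j$, set $t_j := \{t - j/q\}\in[0,1)$, the fractional part; since $n\mapsto e^{-2\pi i (t-j/q)n}$ depends on $t-j/q$ only modulo $1$, the inner average equals $\sigma_N(t_j)$. Hence the entire expression equals the finite sum $\frac{1}{q}\sum_{j=0}^{q-1} e^{-2\pi i j r/q}\,\sigma_N(t_j)$, consisting of exactly $q$ terms (a number independent of $N$), each carrying a coefficient of modulus $1/q$.

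Finally, since $\mathbf{c}$ is an oscillating sequence, Definition~\ref{os} gives $\sigma_N(t_j)\to 0$ as $N\to\infty$ for every one of the $q$ frequencies $t_j\in[0,1)$. As the sum has only finitely many terms with bounded coefficients, the triangle inequality shows the whole quantity tends to $0$, which is precisely (\ref{c-mod-q}). I expect no serious obstacle here: the argument is a clean combination of character orthogonality and the defining property of oscillating sequences. The only point requiring a little care is to reduce each shifted frequency $t-j/q$ into $[0,1)$ before applying the hypothesis, which is harmless because the exponential is $1$-periodic in the frequency variable.
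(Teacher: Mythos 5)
Your proof is correct and follows essentially the same route as the paper: both detect the congruence class via orthogonality of additive characters mod $q$, interchange the finite sums to express the progression average as a linear combination of $q$ Cesàro means at shifted frequencies, and apply the oscillating hypothesis to each. The only cosmetic difference is the sign convention in the character sum (the paper shifts by $t+j/q$ rather than $t-j/q$), which changes nothing.
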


\begin{proof} The idea comes from \cite{DD}. The proof is based on the fact that 
	$$
	\frac{1}{q}\sum_{j=1}^q   e^{\frac{2\pi i j a}{q}} = 1 \ \mbox{\rm or}\ 0 
	$$
	according to $a \equiv 0$ or $\not\equiv 0$ ($\!\!\!\!\mod q$). It follows that
	\begin{eqnarray*}
		\frac{1}{N} \sum_{\stackrel{1\le n\le N}{n\equiv r\, (\!\!\!\!\!\!\mod q)}}
		c_{n} e^{-2\pi i t n} 
		&=& 
		\frac{1}{N}  \sum_{1\le n\le N}  c_{n} e^{-2\pi i t n} \frac{1}{q}\sum_{j=1}^q   e^{\frac{-2\pi i j (n-r)}{q}}\\
		&=&
		\frac{1}{q}\sum_{j=1}^q   e^{\frac{2\pi i jr}{q}}  \frac{1}{N}  \sum_{1\le n\le N}  c_{n} e^{-2\pi i \big(t+\frac{j}{q}\big) n}
		\to 0
	\end{eqnarray*}
as $N\to \infty$, since ${\bf c}$ is an oscillating sequence. 
	
\end{proof}

The proof shows that for (\ref{c-mod-q}) to hold for a fixed $t$ it suffices that
$t+ j/q\in \mathcal{Z}({\bf c})$ for all $j=1, 2, \cdots, q$. 
\medskip

 In \cite{KS}, Kahane and Saias studied completely multiplicative functions with zero sum.  These functions share another kind of oscillating property.

\section{\bf Minimal Mean Attractability and Minimal Mean-L-Stabity}
In this section we define a class of flows from which all oscillating sequences will be proved to
be linearly disjoint.
We start with recalling the classical definition of equicontinuous flows.

\medskip
\begin{definition}~\label{eqcn}
          A flow $\mathcal{X}=(X,T)$ is said to be {\em equicontinuous}
if the family $\{T^n\}_{n=0}^{\infty}$ is equicontinuous. That is to say, for every $\epsilon > 0$
there is a $\delta > 0$ such that whenever $x, y \in X$ with $d(x, y) <\delta$,
we have $d(T^n x,T^n y) <\epsilon$ for $n = 0,1,2, \cdots$.
\end{definition}

It is well known that a flow $\mathcal{X}=(X,T)$ with $T$ being surjective is equicontinuous if and only if there exists
a compatible metric $\rho$ on $X$ such that $T$ acts on $X$ as an isometry, i.e., $\rho(T x,Ty)=\rho (x, y)$ for all $ x, y \in X$.
Thus, if $T$ is surjective and equicontinuous, then $T$ must be a homeomorphism.
Moreover, it is also known that for an equicontinuous flow $\mathcal{X}$, if $T$ is a homeomorphism,
then $T$ acts as a minimal system on the closure $\overline{O(x)}$ of the forward orbit $O(x)=\{T^{n}\}_{n=0}^{\infty}$ of any $x\in X$  (see ~\cite{Pe}). Thus, for an equicontinuous flow $\mathcal{X}$, when $T$ is a transitive homeomorphism, it is always conjugate
to a minimal rotation on a compact abelian metric group. In this case, let $m$ be the unique Haar probability measure on $X$, then the measurable dynamical system $(X,T,m)$
has discrete spectrum.
%Sarnak's conjecture holds for an equicontinuous flow $\mathcal{X}$ such that $T$ is a transitive %homeomorphism.
An equicontinuous flow $\mathcal{X}$ such that $T$ is a homeomorphism can be  decomposed
into  minimal subsystems. Sarnak's conjecture holds for such a flow.  %We will prove it.
Actually, we will prove that Sarnak's conjecture holds  for a  much larger class of flows which we will call minimally mean attractable and minimally mean-L-stable flows. We give a more detailed description below.

We first weaken the equicontinuity condition following Fomin in~\cite{Fomin}. Suppose $\mathbb{N}=\{ 1,2, \cdots, \}$ is the set of natural numbers.
Let $E$ be a subset of $\mathbb{N}$. The upper density of $E$ is, by definition,
$$
\overline{D}(E) = \limsup_{n\to\infty} \frac{\sharp (E\cap[1, n])}{n}.
$$

\medskip
\begin{definition}~\label{mma}
A flow $\mathcal{X}=(X,T)$ is said to be {\em mean-L-stable} ({\bf MLS} for short) (here $L$ recalls the sense of Lyapunov)
if for every $\epsilon > 0$, there is a $\delta > 0$ such that
$d(x, y) <\delta$ implies $d(T^nx,T^ny) <\epsilon$ for all $n=0, 1, 2, \cdots$ except
a subset of natural numbers with the upper density less than $\epsilon$.
\end{definition}

Fomin proved that a minimal and {\bf MLS} flow is uniquely ergodic in~\cite{Fomin}.
Oxtoby proved that a transitive and {\bf MLS} flow is uniquely ergodic in \cite{Oxtoby1952}.
Li, Tu and Ye further proved that every ergodic invariant measure on a {\bf MLS} flow has discrete spectrum.

Any flow $\mathcal{X}$ admits a minimal sub-flow, for example, the restriction on the $\omega$-limit set of any point.

\medskip
\begin{definition}~\label{mmls}
We say that a flow $\mathcal{X}=(X, T)$ is {\em minimally {\bf MLS}} ({\bf MMLS} for short) if
for every minimal subset $K\subseteq X$, the sub-flow $\mathcal{K}=(K, T)$ is {\bf MLS}.
\end{definition}

Following \cite{LTY}, we say the flow ${\mathcal X}$ is {\em mean-equicontinuous} at a point $x \in X$ if for every $\epsilon > 0$, there is
$\delta > 0$ such that for every $y \in X$ with $d(y, x)<\delta$ we have
\begin{equation}\label{MEC}
    \limsup_{n\to\infty} \frac{1}{n} \sum_{k=0}^{n-1} d(T^kx, T^k y) <\epsilon.
\end{equation}
We say that the flow ${\mathcal X}$ is mean-equicontinuous if it is mean-equicontinuous at every point $x\in X$. That is, by the compactness of $X$,
the above inequality (\ref{MEC}) holds for all $x, y\in X$ such that $d(y, x)<\delta$. The following proposition is proved by Li, Tu, and Ye in~\cite{LTY}

\medskip
\begin{proposition}~\label{eq}
A flow ${\mathcal X}$ is mean-equicontinuous if and only if it is {\bf MLS}.
\end{proposition}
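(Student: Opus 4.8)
The plan is to prove the two implications separately, both resting on the elementary observation that the Cesàro average of the nonnegative sequence $k \mapsto d(T^k x, T^k y)$ both controls and is controlled by the density of the set of indices at which this sequence is large. Throughout I would use the uniform form of mean-equicontinuity already recorded just after (\ref{MEC}): by compactness of $X$, a single $\delta$ can be chosen to work simultaneously for all pairs $x,y$ with $d(x,y)<\delta$. Write $M=\mathrm{diam}(X)<\infty$.

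For the implication {\bf MLS} $\Rightarrow$ mean-equicontinuous, I would fix $\epsilon>0$ and pick $\epsilon'>0$ with $\epsilon'(1+M)<\epsilon$. By {\bf MLS} there is $\delta>0$ so that $d(x,y)<\delta$ forces $d(T^n x,T^n y)<\epsilon'$ for all $n$ outside an exceptional set $E=E_{x,y}$ with $\overline{D}(E)<\epsilon'$. Splitting the Cesàro sum over $E$ and its complement, the complementary terms each contribute at most $\epsilon'$, so their average is at most $\epsilon'$, while the terms indexed by $E$ are bounded by $M$ and occupy a proportion $\sharp(E\cap[0,n))/n$ whose $\limsup$ is $\overline{D}(E)<\epsilon'$. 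Taking $\limsup_n$ then gives $\limsup_n \frac1n\sum_{k=0}^{n-1} d(T^k x,T^k y)\le \epsilon'+M\epsilon'<\epsilon$, which is exactly (\ref{MEC}).

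For the converse, mean-equicontinuous $\Rightarrow$ {\bf MLS}, I would fix $\epsilon>0$ and apply mean-equicontinuity with $\epsilon^2$ in place of $\epsilon$ in (\ref{MEC}): there is $\delta>0$ so that $d(x,y)<\delta$ implies $\limsup_n \frac1n\sum_{k=0}^{n-1} d(T^k x,T^k y)<\epsilon^2$. Setting $E=\{k:\ d(T^k x,T^k y)\ge \epsilon\}$, we have $d(T^n x,T^n y)<\epsilon$ for every $n\notin E$ by construction. A Markov-type inequality gives $\epsilon\cdot \frac{\sharp(E\cap[0,n))}{n}\le \frac1n\sum_{k=0}^{n-1} d(T^k x,T^k y)$; passing to $\limsup_n$ yields $\epsilon\,\overline{D}(E)<\epsilon^2$, hence $\overline{D}(E)<\epsilon$. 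This is precisely the {\bf MLS} conclusion.

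The two inequalities themselves are routine; the step requiring the most care is the uniformity of $\delta$ in the mean-equicontinuity hypothesis. Passing from pointwise mean-equicontinuity at each $x$ to a single $\delta$ valid on all of $X$ is less automatic than in the ordinary uniform-continuity argument, because the quantity in (\ref{MEC}) is a $\limsup$ of Cesàro averages rather than a continuous function of $(x,y)$. I would justify it by the compactness reduction indicated in the text preceding this proposition and treat it as the one genuine obstacle, the remaining density and averaging estimates being elementary.
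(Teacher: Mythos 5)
Your two implications are both correct, and the estimates are exactly the standard ones: the Markov-type inequality $\epsilon\,\sharp(E\cap[0,n))\le\sum_{k<n}d(T^kx,T^ky)$ for the direction mean-equicontinuous $\Rightarrow$ \textbf{MLS}, and the split of the Ces\`aro sum over the exceptional set and its complement for the converse. Note, however, that the paper does not prove this proposition at all --- it is quoted from Li, Tu and Ye \cite{LTY} --- so your argument supplies a self-contained proof where the paper gives only a citation; the route you take is the natural one and presumably the same as in \cite{LTY}. One remark on the step you single out as ``the one genuine obstacle'': upgrading pointwise mean-equicontinuity to a uniform $\delta$ is in fact no harder than the classical case. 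Cover $X$ by balls $B(x,\delta_x/2)$, pass to a finite subcover, and let $\delta$ be half the minimum of the corresponding $\delta_{x_i}$; if $d(y,z)<\delta$ then $y$ and $z$ both lie within $\delta_{x_i}$ of a common center $x_i$, and since
$$
\limsup_{n\to\infty}\frac1n\sum_{k=0}^{n-1}d(T^ky,T^kz)\le
\limsup_{n\to\infty}\frac1n\sum_{k=0}^{n-1}d(T^ky,T^kx_i)+\limsup_{n\to\infty}\frac1n\sum_{k=0}^{n-1}d(T^kx_i,T^kz),
$$
the quantity in (\ref{MEC}) is subadditive under the triangle inequality even though it is not continuous in $(y,z)$. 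So the bound $2\epsilon$ follows and the uniformity is routine; you need not treat it as a gap to be outsourced to the text.
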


Let $C(X)$ be the space of all continuous functions $f: X\to \mathbb{C}$ with maximum norm
$$
\|f\|_{\infty} =\max_{x\in X} |f(x)|.
$$

\medskip
\begin{proposition}~\label{meaneq}
Suppose the flow ${\mathcal X}$ is {\bf MLS}. Suppose
${\bf c}=(c_n)$ is a sequence satisfying the growth condition (\ref{growth}).
Then for any continuous function $f\in C(X)$,
$$
 S_Nf(x) =\frac{1}{N} \sum_{n=1}^{N} c_n f(T^n x), \quad (N=1, 2, \cdots)
$$
is an equicontinuous sequence in $C(X)$.
\end{proposition}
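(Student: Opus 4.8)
The plan is to fix $f \in C(X)$ and show that for every $\epsilon>0$ one can produce a single $\delta>0$ for which $d(x,y)<\delta$ forces $|S_Nf(x)-S_Nf(y)|<\epsilon$ for all $N$. Starting from
$$|S_Nf(x)-S_Nf(y)| \le \frac{1}{N}\sum_{n=1}^N |c_n|\,\bigl|f(T^nx)-f(T^ny)\bigr|,$$
I would first record two preliminary facts. Since $X$ is compact, $f$ is uniformly continuous, so for a tolerance $\epsilon_1$ (to be fixed later) there is $\eta>0$ with $|f(u)-f(v)|<\epsilon_1$ whenever $d(u,v)<\eta$. Second, setting $\lambda'=\lambda/(\lambda-1)$ and applying Hölder's inequality together with the growth condition (\ref{growth}), I obtain a uniform bound $B:=\sup_N\bigl(\frac{1}{N}\sum_{n=1}^N|c_n|^\lambda\bigr)^{1/\lambda}<\infty$, so that in particular $\frac{1}{N}\sum_{n=1}^N|c_n|\le B$ for every $N$. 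These two facts will control the ``typical'' indices.

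Next I would invoke the hypothesis that $\mathcal X$ is \textbf{MLS}. Applying Definition~\ref{mma} with the above $\eta$ and a second tolerance $\epsilon_2$, there is $\delta>0$ such that $d(x,y)<\delta$ yields an exceptional set $E=\{n: d(T^nx,T^ny)\ge\eta\}$ of upper density $\overline D(E)<\epsilon_2$. Splitting the sum over $n\le N$ according to whether $n\notin E$ or $n\in E$, the contribution of the good indices is at most $\epsilon_1\cdot\frac1N\sum_{n\le N}|c_n|\le \epsilon_1 B$, which is made small simply by choosing $\epsilon_1$ small.

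The heart of the argument is the contribution of the bad indices, where I only control the count and must use the crude bound $|f(T^nx)-f(T^ny)|\le 2\|f\|_\infty$. Here I apply Hölder's inequality a second time, to $\sum_{n\le N,\,n\in E}|c_n|\cdot 1$, obtaining
$$\frac{1}{N}\sum_{\substack{n\le N\\ n\in E}}|c_n| \le \Bigl(\frac{1}{N}\sum_{n\le N}|c_n|^\lambda\Bigr)^{1/\lambda}\Bigl(\frac{\#(E\cap[1,N])}{N}\Bigr)^{1/\lambda'} \le B\,\Bigl(\frac{\#(E\cap[1,N])}{N}\Bigr)^{1/\lambda'},$$
where the identity $1-\tfrac1\lambda=\tfrac1{\lambda'}$ is exactly what makes the powers of $N$ cancel. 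Because $\overline D(E)<\epsilon_2$, for all sufficiently large $N$ the last factor lies below $\epsilon_2^{1/\lambda'}$, so the bad contribution is at most $2\|f\|_\infty B\,\epsilon_2^{1/\lambda'}$. This is the step where $\lambda>1$ is indispensable: for $\lambda=1$ a set of arbitrarily small density could still carry almost all of the mass of $(|c_n|)$, and the estimate would break down; it is the $L^\lambda$ growth that converts density smallness into smallness of the $(|c_n|)$-weighted average.

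Finally I would assemble the parameters: given $\epsilon$, first choose $\epsilon_2$ so small that $2\|f\|_\infty B\,\epsilon_2^{1/\lambda'}<\epsilon/3$, then $\epsilon_1$ so small that $\epsilon_1 B<\epsilon/3$, which fixes $\eta$ and hence $\delta$. The remaining point is uniformity in $N$: the density estimate governs all large $N$, while each of the finitely many averages $S_1f,\dots,S_{N_1}f$ is an honest continuous function, so shrinking $\delta$ further (using continuity of the maps $T^n$ for $n\le N_1$) makes these contribute less than $\epsilon/3$ as well. I expect this last bookkeeping—matching the range of $N$ controlled by the \emph{asymptotic} density of $E$ against the small-$N$ range controlled by ordinary continuity—to be the main technical nuisance, whereas the conceptual core is the double use of Hölder's inequality that turns the growth bound (\ref{growth}) and the density smallness supplied by mean-L-stability into equicontinuity of $\{S_Nf\}$.
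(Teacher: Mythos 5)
Your proposal is correct and follows essentially the same route as the paper: Hölder's inequality with exponents $\lambda,\lambda'$ to exploit the growth condition (\ref{growth}), the {\bf MLS} definition to produce an exceptional set of small upper density, a good/bad split of the indices, and ordinary continuity of the finitely many averages $S_1f,\dots,S_{N^*}f$ to handle small $N$. The only (cosmetic) difference is that the paper applies Hölder once to the whole sum, separating $(|c_n|)$ from the $f$-differences before splitting, whereas you split first and apply Hölder to the bad indices against the indicator of $E$; the estimates obtained are the same.
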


\begin{proof}
Suppose $\|f\|_{\infty}\not= 0$, otherwise it is trivial. 
Take a $\lambda > 1$ satisfying (\ref{growth}). Suppose $\lambda'> 1$ be the  number such that
$
\frac{1}{\lambda} + \frac{1}{\lambda'}=1.
$
There is a positive constant $C$ (we can take $C$ to be the supremum of $(\frac{1}{N} \sum_{n=1}^{N} |c_{n}|^{\lambda} )^{1/\lambda}$) such that
for any $x, y\in X$, we have
 \begin{equation}~\label{Holder1}
 |S_Nf(x) - S_N f(y)| \leq C  \Big(\frac{1}{N} \sum_{n=0}^{N-1} |f(T^n x) - f(T^n y)|^{\lambda'}\Big)^{1/\lambda'}.
 \end{equation}
This is a direct consequence of the H\"{o}lder inequality and (\ref{growth}).

By the uniform continuity of $f$, for any $\epsilon >0$ there exists $\eta >0$ such that
\begin{equation}\label{uniform_con}
d(x, y)< \eta \Rightarrow |f(x)- f(y)|<\frac{\epsilon}{2 C}.
\end{equation}
 We can take $\eta \le \frac{\epsilon}{4C\|f\|_\infty}$.
 Since $\mathcal{X}$ is {\bf MLS}, there is a $\delta' > 0$ such that $x, y\in X$ with
$d(x, y) <\delta'$ implies $d(T^nx,T^ny) <\eta$ for all positive integer $n$ except a set, denoted $E$, of upper density less than $\eta$.
There exists an integer $N^*$ such that
$$
     \frac{\sharp (E \cap [1, N])}{N} < \eta, \qquad \forall N \ge N^*
$$
Thus by (\ref{Holder1}), if $N\ge N^*$ and if $ d(x, y)<\delta'$ we have
\begin{equation}\label{difference*}
    |S_Nf(x) - S_N f(y)| < C \big(2 \|f\|_\infty \eta + \frac{\epsilon}{2C}\big)= \epsilon.
\end{equation}
In the above inequality, we first split the sum in (\ref{Holder1}) into two sums according to
$d(T^nx, T^n y)\ge \eta$ or $<\eta$.

The finite many family $\{S_1f , \cdots, S_{N^*}f\}$ being equicontinuous, there exists $\delta''$
such that
\begin{equation}\label{difference**}
      d(x, y)<\delta'' \Rightarrow \max_{1\le N\le N^*} |S_Nf(x) -S_Nf(y)|<\epsilon.
\end{equation}
Finally we conclude the proposition from (\ref{difference*}) and (\ref{difference**}).
\end{proof}

In~\cite[p. 575]{Auslander}, Auslander had decomposed a {\bf MLS} flow
into a star closed decomposition. In this decomposition each component  contains
precisely one minimal set and all invariant measures concentrated on minimal sets.
This implies that from measure-theoretical point of view, the union of these minimal sets is an attractor:
all points outside the union of these minimal sets are attracted into minimal sets eventually.
We will develop this idea into a new concept called the minimal mean attractablity.

 \medskip
 \begin{definition}~\label{mma}
Suppose $\mathcal{X}=(X,T)$ is a flow. Suppose $K$ is a closed subset of $X$ and $T: K\to K$ is minimal.
We say $x\in X$ is {\em mean attracted} to $K$ if for any $\epsilon >0$ there is a point $z=z_{\epsilon, x}\in K$
(depending on $x$ and $\epsilon$) such that
\begin{equation}\label{MA}
    \limsup_{N\to\infty} \frac{1}{N} \sum_{n=1}^{N} d(T^n x, T^n z) <\epsilon.
\end{equation}
The {\em basin of attraction} of $K$, denoted $\hbox{\rm Basin}(K)$, is defined to be the set of all
points $x$ which are mean attracted to $K$. It is trivial that
$K \subset \hbox{\rm Basin}(K)$.
We call $\mathcal{X}$ {\em minimally mean attractable} ({\bf MMA} for short) if
\begin{equation}\label{Decomp}
   X= \bigcup_{K} \mbox{\rm Basin}(K)
\end{equation}
where $K$ varies among all minimal subsets of $X$.
\end{definition}

Recall that a point $x\in X$ is attracted to $K$ if
$$
\lim_{n\to \infty} d(T^n x, K)=0.
$$
%This is different from that $x$ is mean attracted to $K$.
In general, this can not imply to that $x$ is mean attracted to $K$.
However, if $K$ is a periodic cycle, it does. %then the first statement implies the second statement.
We have even the following proposition.
%\end{remark}

\medskip
\begin{proposition} Let $x\in X$. Suppose that $\omega(x) =\{x_0,x_1, \cdots, x_{k-1}\}$ is a periodic cycle of period $k\geq 1$.
%Suppose $x\in X$ and $T^{n}x\to \omega(x)$ as $n\to \infty$. 
Then for any $\epsilon >0$ sufficiently small, there exists an integer $\tau \ge 0$ and
$z\in \omega(x)$ such that
$$
d(T^{\tau+m}x, T^{\tau +m} z) <\epsilon, \quad \forall m\ge 0.
$$
\end{proposition}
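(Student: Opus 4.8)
The plan is to exploit two facts: that the forward orbit of $x$ eventually enters every neighborhood of the finite set $\omega(x)=\{x_0,\dots,x_{k-1}\}$ (indeed $d(T^n x,\omega(x))\to 0$, which holds on any compact space), and that the $k$ points of a genuine period-$k$ cycle are distinct, hence separated by a definite gap. Writing $Tx_j=x_{j+1}$ (indices mod $k$) and, for $k\ge 2$, $\rho:=\min_{i\neq j}d(x_i,x_j)>0$, I would only need to treat $\epsilon<\rho/2$, which is the meaning of ``$\epsilon$ sufficiently small''; the case $k=1$ is the same argument with the separation issue vacuous.

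First I would fix a tracking scale. Using continuity of $T$ at each of the finitely many points $x_j$, I choose $\eta\in(0,\epsilon]$ with $\eta<\rho/2$ such that $d(a,x_j)<\eta$ forces $d(Ta,x_{j+1})<\rho/2$ for every $j$. Next, using $d(T^n x,\omega(x))\to 0$, I pick the integer $\tau$ so that $d(T^n x,\omega(x))<\eta$ for all $n\ge\tau$. Because $\eta<\rho/2$ and the cycle points are $\rho$-separated, each $T^n x$ with $n\ge\tau$ lies within $\eta$ of exactly one cycle point $x_{\phi(n)}$, which defines a well-determined index map $\phi:\{n\ge\tau\}\to\{0,\dots,k-1\}$.

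The heart of the argument, and the step I expect to be the main obstacle, is to show that the orbit runs through the cycle in the correct cyclic order, i.e.\ $\phi(n+1)\equiv\phi(n)+1\pmod k$ for all $n\ge\tau$. This is where one must prevent the tracking error from degrading under iteration, even though $T$ need not be contracting. The point is that $d(T^n x,x_{\phi(n)})<\eta$ gives $d(T^{n+1}x,x_{\phi(n)+1})<\rho/2$ by the choice of $\eta$, while simultaneously $d(T^{n+1}x,x_{\phi(n+1)})<\eta<\rho/2$; since two distinct cycle points are $\rho$ apart, these two cycle points must coincide, forcing $\phi(n+1)=\phi(n)+1$. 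Thus the a priori confinement of the whole forward orbit near the finite set $\omega(x)$ re-snaps each iterate back to within $\eta$, and the error never accumulates.

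Finally I would read off $z$. Iterating the order relation gives $d(T^{\tau+m}x,x_{\phi(\tau)+m})<\eta$ for every $m\ge 0$. Since $T$ permutes the cycle cyclically, there is a unique $z\in\omega(x)$ with $T^{\tau}z=x_{\phi(\tau)}$, namely $z=x_{\phi(\tau)-\tau\bmod k}$; then $T^{\tau+m}z=x_{\phi(\tau)+m}$, so that $d(T^{\tau+m}x,T^{\tau+m}z)<\eta\le\epsilon$ for all $m\ge 0$, as required. For $k=1$ one simply takes $z=x_0$ and any $\tau$ with $d(T^n x,x_0)<\epsilon$ for $n\ge\tau$.
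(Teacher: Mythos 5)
Your proof is correct and follows essentially the same strategy as the paper's: surround the cycle points by disjoint small balls, use continuity of $T$ at those finitely many points to show the orbit advances through the balls in the correct cyclic order, and then pull $x_0$ (resp.\ $x_{\phi(\tau)}$) back along the cycle to get $z$. The only difference is bookkeeping: you invoke $d(T^nx,\omega(x))\to 0$ to define a global index map $\phi$ and force $\phi(n+1)=\phi(n)+1$ by the $\rho$-separation of the cycle points, whereas the paper works with the visit-time sets $R_\tau(x_j)$ and the eventual inclusion $R_\tau(x_j)+1\subset R_\tau(x_{j+1})$; both mechanisms rest on the same two facts and yield the same conclusion.
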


\begin{proof} Let $K=\omega(x)$.
Take  any $\epsilon >0$ sufficient small so that
the balls $B(x_j, \epsilon)$, $j=0,1,\cdots, k-1$, are disjoint. 
For any $j$ and any $\tau \ge 0$, consider the subset of the natural numbers
$$
R_\tau(x_j) =\{n\ge \tau:   T^n x \in B(x_j, \epsilon)\}.
$$
For any fixed $0\leq j<k$, 
$$
\lim_{R_\tau(x_j) \ni n \to \infty} T^n x = x_j.
$$
By the continuity of $T$, we have
$$
 \lim_{R_\tau(x_j)+1 \ni n \to \infty} T^n x = x_{j+1 \!\!\!\!\pmod{k}}.
$$
This implies that we can find a $\tau_{j}\geq 1$ such that 
\begin{equation}\label{recurrence}
     R_\tau (x_j)+1 \subset R_\tau(x_{j+1}), \quad \forall \tau\geq \tau_{j}.
\end{equation}
Let $\tau_{\max}=\max_{0\leq j<k} \tau_{j}$. Take $\tau \geq \tau_{\max} \in R_{\tau_{\max}} (x_{0})$. Then   
$$
d(T^\tau x , x_0) <\epsilon
$$ 
and from (\ref{recurrence}),
$$
d(T^{m+\tau}x, T^m x_0) = d(T^{m+\tau}x, x_{m \!\!\!\! \pmod{k}})<\epsilon, \quad \forall m\geq 0.
$$
We take $z\in K$ such that $T^\tau z =x_0$, then we have that
$$
d(T^{\tau+m}x, T^{\tau+m} z) = d(T^{\tau+m}x, x_{m \pmod{k}})<\epsilon, \quad \forall m\geq 0.
$$

\end{proof}

\medskip
\begin{example}~\label{1ex}
The flow
$$
([-1, 1], T(x)=-x^{2})
$$  
is {\bf MMA} and {\bf MMLS}.
\end{example}

\begin{proof}
The point $-1$ and $0$ are only two fixed points. Since $T'(-1)=-2$ and $T'(0)=0$, $-1$ is the repelling fixed point and $0$ is the attracting fixed point.
  %It is clear that $T: \{-1\} \to \{-1\}$ and $T: \{0\}\to \{0\}$ are minimal and
   %equicontinuous. Furthermore,
We have that
$$
    \mbox{\rm Basin}(\{-1\}) = \{-1, 1\}, \quad \hbox{and}\quad
    \mbox{\rm Basin}(\{0\}) = (-1, 1).
$$
Thus we have  
$$
[-1,1]= {\rm Basin}(\{-1\})\cup {\rm Basin}(\{0\}).
$$ 
This implies that $\mathcal{X}$ is {\bf MMA} and {\bf MMLS}.
\end{proof}

This flow is actually equicontinuous on any closed interval $[-a,a]\subset (-1,1)$. The next example is different (see Remark~\ref{neq}).

Let $\mathbb{T}^{2}=\mathbb{R}^{2}/\mathbb{Z}^{2}$ be the standard 2-torus. Consider the automorphism
$$
T_{t}(x,y) = (x+ty,y): \mathbb{T}^{2}\to \mathbb{T}^{2}
$$
for a fixed integer $t$.

\medskip
\begin{example}~\label{2ex}
The flow 
$$
\mathcal{X}= (\mathbb{T}^{2}, T_{t})
$$
is {\bf MMA} and {\bf MMLS}. Actually we have that all
automorphisms on $\mathbb{T}^2$ with zero topological entropy are {\bf MMA} and {\bf MMLS}.
\end{example}

\begin{proof}
Let $\mathbb{T}=\mathbb{R}/\mathbb{Z}$ be the circle.  Every circle $\mathbb{T}_{y}=\mathbb{T}\times \{y\}$ ($y$ being fixed) is $T_{t}$-invariant.
The restriction of $T_{t}$ on each $\mathbb{T}_{y}$ is a one-dimensional rotation with the rotation number $ty$. If $ty$ is irrational,
then $T_{t}: \mathbb{T}_{y}\to \mathbb{T}_{y}$ is minimal.
If $ty$ is rational, then every point $(x,y)\in \mathbb{T}_{y}$ is a periodic point. Then the restriction of $T_{t}$ on each of these periodic orbits is minimal.
In this case, $\mathbb{T}_{y} = \cup_{(x,y)\in \mathbb{T}_{y}} Orb ((x,y))$. Furthermore, we have that the minimal decompsition
$$
\mathbb{T}^{2}=\Big( \cup_{ty: irrational} \mathbb{T}_{y}\Big)  \cup \Big( \cup_{ty: rational}\cup_{(x,y)\in \mathbb{T}_{y}} Orb\big( (x,y)\big)\Big) .
$$
So  this flow is {\bf MMA}.

Since $T_{t}$ on each $\mathbb{T}_{y}$ is isometric, so it is equicontinuous on each of its minimal subset. So it is {\bf MMLS}.

As we shall see in Section 6,  any automorphism $T$ on $\mathbb{T}^2$ with zero topological entropy is (linearly)
topologically conjugate to a $T_{t}$ for some integer $t$. So the flow $\mathcal{X}=(\mathbb{T}^{2}, T)$ is also {\bf MMA} and {\bf MMLS}. 
\end{proof}

\medskip
\begin{remark}~\label{neq}
The flow $(\mathbb{T}^{2}, T_{t})$ is not mean-equicontinuous (equivalently, not {\bf MLS})
because the rotation number of $T_{t}|_{\mathbb{T}_{y}}$ varies with $y$.  % So it is not {\bf MLS}.
\end{remark}
 
 \section{\bf Disjointness}

Recall that $\mathcal{X}=(X,T)$ is a flow if $X$ is a compact metric space with metric $d$ and $T:X\to X$
is a continuous map.  A sequence $(\xi(n))$ is said to be {\em observable} in $\mathcal{X}$
if there exists a continuous function $f\in C(X)$ and a point $x\in X$ such that $\xi(n) = f(T^n x)$ for all $n\ge 1$.
Following the idea of Sarnak~\cite{Sa1},  we say that a given sequence of complex numbers ${\bf c}=(c_n)$
is {\em linearly disjoint} from $\mathcal{X}$ if for any observable $(\xi(n))$ in $\mathcal{X}$, we have
\begin{equation}~\label{ld}
  \lim_{N \to \infty} \frac{1}{N}\sum_{n=1}^{N} c_n \xi(n)=0.
\end{equation}

The first main result in this paper is the following one.

\medskip
\begin{theorem}~\label{main1}
Any oscillating sequence ${\bf c}=(c_n)$ satisfying the growth condition (\ref{growth})
is linearly disjoint from all {\bf MMA} and {\bf MMLS} flows $\mathcal{X}=(X, T)$.
More precisely, %if $\mathcal{X}$ is a {\bf MMA} and {\bf MMLS} flow
%and ${\bf c}=(c_n)$ is an oscillating sequence, then
for any continuous function $f\in C(X)$, we have that
$$
 \lim_{N \to \infty} \frac{1}{N}\sum_{n=1}^{N} c_n f(T^n x)=0
$$
for every $x\in X$ and the limit is uniform on each minimal subset.
\end{theorem}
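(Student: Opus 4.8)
The plan is to prove the statement in two stages: first establish the full (uniform) conclusion on an arbitrary minimal subset $K\subseteq X$, where the hypothesis \textbf{MMLS}, the growth condition, and the oscillation of $\mathbf{c}$ can all be brought to bear; then transport this to an arbitrary $x\in X$ using the \textbf{MMA} hypothesis. The decomposition (\ref{Decomp}) guarantees that every $x$ is mean attracted to some minimal $K$, so this two-stage scheme covers all of $X$, while the uniformity asserted in the theorem is only claimed on minimal subsets, which is exactly what Stage~1 delivers.

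For Stage~1, fix a minimal subset $K$. By \textbf{MMLS} the subflow $(K,T)$ is \textbf{MLS}, hence mean-equicontinuous by Proposition~\ref{eq}, and by Proposition~\ref{meaneq} the family $\{S_Nf|_K\}_N$ is equicontinuous and uniformly bounded (by $C\|f\|_\infty$) on the compact set $K$. By Fomin's theorem $(K,T)$ is uniquely ergodic; let $m$ be its invariant measure, which has discrete spectrum by the theorem of Li--Tu--Ye. Writing $U$ for the Koopman operator $Uf=f\circ T$ on $L^2(m)$, discrete spectrum furnishes an orthonormal eigenbasis $\phi_k$ with $U\phi_k=e^{2\pi i t_k}\phi_k$, $t_k\in[0,1)$; expanding $f=\sum_k a_k\phi_k$ with $\sum_k|a_k|^2<\infty$ gives
\[
S_Nf=\sum_k a_k\Big(\frac{1}{N}\sum_{n=1}^{N}c_n e^{2\pi i n t_k}\Big)\phi_k=\sum_k a_k\,\sigma_N(-t_k)\,\phi_k,
\]
so that $\|S_Nf\|_{L^2(m)}^2=\sum_k|a_k|^2\,|\sigma_N(-t_k)|^2$. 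The H\"older inequality with (\ref{growth}) gives a uniform bound $|\sigma_N(t)|\le C$ for all $N$ and $t$, while the oscillation of $\mathbf{c}$ (together with the $1$-periodicity of $\sigma_N$ in $t$) gives $\sigma_N(-t_k)\to 0$ for each $k$; dominated convergence in the index $k$ then yields $\|S_Nf\|_{L^2(m)}\to 0$. To upgrade this to uniform convergence, I would invoke Arzel\`a--Ascoli: the sequence $\{S_Nf|_K\}$ is precompact in $(C(K),\|\cdot\|_\infty)$, and if a subsequence converged uniformly to some $g\in C(K)$ it would also converge to $g$ in $L^2(m)$, forcing $g=0$ $m$-a.e.; since $\mathrm{supp}\,m=K$ by minimality and $g$ is continuous, $g\equiv0$. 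Thus every uniform limit point is $0$, which forces $S_Nf\to 0$ uniformly on $K$.

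For Stage~2, take an arbitrary $x\in X$; by \textbf{MMA} it lies in $\mathrm{Basin}(K)$ for some minimal $K$, so given $\epsilon>0$ there is $z\in K$ with $\limsup_N\frac{1}{N}\sum_{n=1}^{N}d(T^nx,T^nz)<\epsilon$ by (\ref{MA}). Exactly as in the proof of Proposition~\ref{meaneq}, the H\"older bound (\ref{Holder1}), the uniform continuity of $f$, and the Chebyshev estimate
\[
\frac{1}{N}\,\sharp\{n\le N:\ d(T^nx,T^nz)\ge\eta\}\le\frac{1}{\eta}\cdot\frac{1}{N}\sum_{n=1}^{N}d(T^nx,T^nz)
\]
together bound $\limsup_N|S_Nf(x)-S_Nf(z)|$ by a quantity that tends to $0$ with $\epsilon$. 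Since $z\in K$ gives $S_Nf(z)\to 0$ by Stage~1, letting $\epsilon\to0$ yields $S_Nf(x)\to 0$.

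The genuinely delicate point is the passage from the purely spectral, $L^2$ information provided by discrete spectrum---which is blind to individual points---to convergence holding at every point and uniformly on $K$; the eigenfunctions $\phi_k$ need not be continuous, so one cannot simply approximate $f$ in sup norm by continuous trigonometric-type polynomials and argue directly. My resolution is to treat the a priori equicontinuity of $\{S_Nf\}$ from Proposition~\ref{meaneq} as a compactness device that manufactures uniform limit points, whose identity is then pinned down by the soft $L^2$ vanishing. A secondary technical nuisance is the mismatch in Stage~2 between the $L^1$-type control of orbit distances supplied by mean attraction (\ref{MA}) and the $L^{\lambda'}$-type quantity appearing in (\ref{Holder1}), but this is absorbed by the same "split at threshold $\eta$" argument already used to prove Proposition~\ref{meaneq}.
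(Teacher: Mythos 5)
Your proposal is correct, and it reproduces the paper's two-stage architecture exactly: equicontinuity of $\{S_Nf\}$ from Proposition~\ref{meaneq} plus Arzel\`a--Ascoli on each minimal $K$, an $L^2$ vanishing statement against an invariant measure of full support to identify every uniform limit point as $0$, and then the basin/Chebyshev/H\"older splitting for a general $x\in\mathrm{Basin}(K)$ (your Stage~2 is essentially verbatim the paper's). Where you genuinely diverge is the source of the $L^2$ vanishing. The paper takes an arbitrary weak-$*$ limit $\nu$ of the empirical measures $\frac{1}{N}\sum_{n\le N}\delta_{T^nx_0}$, uses Gottschalk's theorem on almost periodicity to see that $\nu$ charges every ball around $x_0$, and applies the spectral lemma
$\|S_Nf\|^2_{L^2(\nu)}=\int\bigl|\frac{1}{N}\sum_{n\le N} c_ne^{2\pi i nt}\bigr|^2\,d\sigma_f(t)$,
which needs nothing about the system beyond invariance of $\nu$; oscillation plus dominated convergence against the spectral measure $\sigma_f$ finishes. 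You instead route through Fomin's unique ergodicity and the Li--Tu--Ye discrete spectrum theorem to expand $f$ in a Koopman eigenbasis and compute $\|S_Nf\|^2_{L^2(m)}=\sum_k|a_k|^2|\sigma_N(-t_k)|^2$ explicitly, with the uniform bound $|\sigma_N|\le C$ coming from the growth condition. Both are valid; your computation is really the spectral lemma specialized to the atomic case $\sigma_f=\sum_k|a_k|^2\delta_{e^{2\pi i t_k}}$. What your route buys is a cleaner full-support statement (the unique invariant measure of a minimal system has full support, so no Gottschalk, no weak-$*$ limits, and a direct rather than contradiction-based identification of the uniform limit); what it costs is reliance on the discrete-spectrum theorem, which the paper cites in Section~3 but never actually needs in this proof. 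The paper's spectral-measure formulation is the more economical one and is also what makes Remark~\ref{R1b} (it suffices that the spectrum of ${\bf c}$ avoid $\mathrm{supp}\,\sigma_f$) available, a refinement that is less visible from the eigenbasis expansion.
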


\begin{proof}
By the hypothesis, $X$ has the decomposition in Equality (\ref{Decomp}).
So, we need only prove that
for any minimal subset $K\subset X$, we have
\begin{equation}\label{limsup-K}
      \forall x\in \mbox{\rm Basin}(K), \quad \lim_{N \to \infty} \frac{1}{N}\sum_{n=1}^{N} c_n f(T^n x)= 0.
\end{equation}
For this purpose, we define
$$
S_N f(x) =\frac{1}{N}\sum_{n=1}^{N} c_n f(T^{n}x), \quad N=1, 2, \cdots.
$$
We first prove that the sequence $\mathcal{S}= \{ S_{N} f(x)\}_{N=1}^{\infty}$ converges to $0$ uniformly on $K$.

The sequence $\mathcal{S}$ is uniformly bounded in $C(K)$ by $C \|f\|_{\infty}$ for some constant $C>0$ and is equicontinuous by Proposition~\ref{meaneq}.
So, it is precompact in $C(K)$ by the Arzela-Ascoli theorem. Therefore, every subsequence of $\mathcal{S}$ has a convergent subsequence.
What we have to prove is the claim that every convergent subsequence converges uniformly on $K$ to zero.

 Let us prove this claim by contradiction. Suppose that some subsequence $S_{N_j} f$
 converges uniformly to a continuous function $g \in C(K)$ which is not identically zero. Assume
 $|g(x_0)|>0$ for some $x_0\in K$ and we can actually assume that
 \begin{equation}\label{contradiction2}
 |g(x)|>0, \ \ \ \forall x\in B(x_0, r)=\{x\in K\;|\; d(x, x_{0}) <r\}
  \end{equation}
 for some $r>0$ by the continuity of $g$.
 On the other hand,
we consider the probability measures on $K$:
$$
\nu_{N} =\frac{1}{N}\sum_{n=1}^{N} \delta_{T^n x_0}
$$
where $\delta_y$ denote the Dirac measure concentrated at $y$.
Let $\nu$ be a weak limit of  a subsequence of the sequence $(\nu_{N})$. Then it is $T$-invariant.
Since $K$ is minimal,
 by the Gottschalk theorem (see~\cite[Lemma 4]{Gottschalk1944}), $x_0$ is almost periodic in the sense that for every $\epsilon >0$, the set of $n$ such that
 $d(x_0, T^n x_0)<\epsilon$ is relatively dense.
 This implies that $\nu(B(x_0, \epsilon))>0$ for any $\epsilon >0$ (the support of $\nu$ is actually whole minimal set $K$).

Now consider the measure-preserving dynamical system
$$
(K, \mathcal{B}(K), T, \nu)
$$
where $\mathcal{B}(K)$
is the Borel $\sigma$-field of $K$.
Let $\sigma_f$ be the spectral measure of $f$ with respect to the $T$-invariant probability measure $\nu$ on $K$, $f$ being considered as in $L^2(\nu)$ and $\sigma_f$ being defined on the unite circle
$$
S^{1}=\{ z\in {\mathbb C}\;|\; |z|=1\}.
$$
By the spectral lemma (see \cite[p. 94-95]{Krengel}), we have
$$
\|S_{N}f\|^2_{L^2(\nu)} =\int_{\mathbb T} \Big| \frac{1}{N}\sum_{n=1}^{N} c_n e^{2\pi i n t}\Big|^2 d\sigma_f(t).
$$
Since ${\bf c}=(c_{n})$ is an oscillating sequence, by the Lebesgue dominated convergence theorem, the right hand of the above equation converges to $0$.
This implies that $S_Nf$ converges to zero in $L^2(\nu)$-norm.
 In particular, $S_{N_j}f$ converges to zero in $L^2(\nu)$-norm. Consequently, there is a subsequence $N_j'$
 of $N_j$ such that $S_{N_j'}f$ converges to zero $\nu$-almost everywhere, which contradicts
(\ref{contradiction2}) and the fact that $\nu(B(x_0, r))>0$. The contradiction implies that the sequence $\mathcal{S}$ converges uniformly on $K$ to $0$.

We have just proved the limit (\ref{limsup-K})  for $x\in K$.
Now we are going to prove  (\ref{limsup-K}) for a general point $x \in  \mbox{\rm Basin}(K)$.
Let $\eta >0$ be an arbitrarily small number. By the uniform continuity of $f$, there is a $\epsilon >0$
such that $d(u, v)<\epsilon$ implies $|f(u) - f(v)| <\eta$. We assume that  $2\|f\|_\infty \epsilon \le \eta$. For $x\in \mbox{\rm Basin}(K)$,  by the definition of $\mbox{\rm Basin}(K)$, there exists $z=z_{\epsilon,x}\in K$
such that
   \begin{equation*}\label{MA2}
    \limsup_{N\to\infty} \frac{1}{N} \sum_{n=1}^{N} d(T^n x, T^n z) <\epsilon^2.
\end{equation*}
which implies that  $\overline{D}(E)\le \epsilon$
where $E=\{n\ge 0: d(T^n x, T^n z)\ge \epsilon\}$, because
$$
   \epsilon \sharp (E\cap [1, N])  \le \sum_{n=1}^{N} d(T^n x, T^n z).
$$
Write
$$
     S_Nf(x) = S_Nf(x) - S_Nf(z) +  S_Nf(z).
$$
As $z\in K$, we have proved that $S_Nf(z)$ tends to zero.
On the other hand, following the exact same proof of Proposition~\ref{meaneq}, we show that
\begin{equation*}\label{difference2}
    |S_Nf(x) - S_N f(z)| \leq C (2 \|f\|_\infty \epsilon + \eta)\le 2 C \eta.
\end{equation*}
Since $\eta>0$ is arbitrary, we have thus proved (\ref{limsup-K}) for every $x\in \mbox{\rm Basin}(K)$.
\end{proof}
 
A special case of Theorem~\ref{main1} is that
 
\medskip
\begin{corollary}~\label{scmmammls}
The M\"obius function is linearly disjoint from all {\bf MMA} and {\bf MMLS} flows.
\end{corollary}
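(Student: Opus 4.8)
The plan is to recognize this corollary as an immediate specialization of Theorem~\ref{main1}: it suffices to verify that the M\"obius function $(\mu(n))$ is an oscillating sequence in the sense of Definition~\ref{os} and that it satisfies the growth condition (\ref{growth}) for some $\lambda > 1$. Once both hypotheses are in place, the conclusion---linear disjointness from every {\bf MMA} and {\bf MMLS} flow---follows verbatim from the theorem, with no further dynamical argument required.

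For the oscillating property, I would invoke Davenport's theorem as recorded in Example~\ref{ex4}: for every $0 \le t < 1$ the Cesaro means $\sigma_N(t) = \frac{1}{N}\sum_{n=1}^N \mu(n) e^{-2\pi i n t}$ satisfy $\sigma_N(t) = O(1/\log^h N)$ for any $h>0$, uniformly in $t$, and hence tend to zero as $N \to \infty$. Thus $\mathcal{Z}\big((\mu(n))\big) = [0,1)$, which is precisely the statement that $(\mu(n))$ is an oscillating sequence.

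The growth condition is even easier and requires no arithmetic input. Since $|\mu(n)| \in \{0,1\}$, for any $\lambda > 1$ we have $|\mu(n)|^{\lambda} = |\mu(n)| \le 1$, whence $\sum_{n=1}^{N} |\mu(n)|^{\lambda} \le N = O(N)$. (One could instead appeal to the sharper asymptotic $\sum_{n=1}^{N} |\mu(n)| \sim 6\pi^{-2} N$ counting squarefree integers, but the crude bound already suffices.) With both hypotheses verified, Theorem~\ref{main1} applies directly and delivers the corollary, including the uniformity of the convergence on each minimal subset.

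The honest remark is that there is no genuine obstacle left to overcome at this stage. The substantive work has already been done---on the dynamical side in the proof of Theorem~\ref{main1} (the equicontinuity of the Cesaro averages furnished by Proposition~\ref{meaneq}, the spectral-measure argument on each minimal set, and the extension from a minimal set to its basin via mean attractability), and on the number-theoretic side in Davenport's estimate quoted above. The corollary merely records that the M\"obius function is one concrete input to which this machinery applies; the entire proof reduces to checking the two hypotheses, both of which hold without difficulty.
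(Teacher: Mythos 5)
Your proposal is correct and follows the paper's own proof exactly: the paper likewise derives the corollary by citing Davenport's theorem (Example~\ref{ex4}) to establish the oscillating property and then applying Theorem~\ref{main1}. Your explicit verification of the growth condition (\ref{growth}) via $|\mu(n)|\le 1$ is a small, welcome addition that the paper leaves implicit.
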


\begin{proof}
This is because the  M\"obius function gives an oscillating sequence 
due to Davenport's theorem (see Example~\ref{ex4}).
\end{proof} 
  
Another consequence of Theorem~\ref{main1} is that   
  
\medskip
\begin{corollary}\label{M2}
Any oscillating sequence satisfying the growth condition (\ref{growth}) is linearly disjoint from all equicontinuous flows.
\end{corollary}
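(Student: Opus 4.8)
The plan is to deduce this corollary directly from Theorem~\ref{main1} by showing that every equicontinuous flow is simultaneously {\bf MMA} and {\bf MMLS}. The {\bf MMLS} half is immediate: equicontinuity is stronger than {\bf MLS} (in the definition of {\bf MLS} the exceptional set of upper density $<\epsilon$ may be taken to be empty, since equicontinuity already yields $d(T^nx,T^ny)<\epsilon$ for \emph{all} $n$), and equicontinuity obviously passes to the restriction $(K,T)$ on any closed invariant subset $K$, in particular on any minimal subset. Hence every minimal subflow is {\bf MLS}, which is exactly the {\bf MMLS} condition. The substantive point is therefore the {\bf MMA} property, and the difficulty is that $T$ need not be surjective, so one cannot simply invoke the fact that an equicontinuous homeomorphism has minimal orbit closures.

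To handle this I would first replace $d$ by the compatible metric
\[
\rho(x,y)=\sup_{n\ge 0} d(T^nx,T^ny),
\]
which is a genuine metric generating the same topology precisely because $\{T^n\}$ is equicontinuous, and for which $T$ becomes nonexpansive: $\rho(Tx,Ty)\le\rho(x,y)$. Since $\rho\ge d$ pointwise, any mean-attraction estimate proved for $\rho$ transfers verbatim to $d$, so it suffices to verify the {\bf MMA} condition of Definition~\ref{mma} with respect to $\rho$. Next I would consider the eventual image $X_\infty=\bigcap_{n\ge0}T^n(X)$, a nonempty compact set with $T(X_\infty)=X_\infty$, on which $T$ acts as a nonexpansive surjection; by the classical fact that a nonexpansive surjection of a compact metric space is a surjective isometry, $T|_{X_\infty}$ is an invertible isometry. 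Being an equicontinuous homeomorphism, $T|_{X_\infty}$ decomposes $X_\infty$ into minimal sets, each orbit closure $\overline{\{T^jy\}_{j\ge0}}$ being minimal.

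Finally I would establish mean attraction of an arbitrary $x\in X$ to a minimal set. Choose $y\in\omega(x)$; since $\omega(x)\subseteq X_\infty$, the orbit closure $K$ of $y$ inside $X_\infty$ is minimal and $T|_K$ is an invertible minimal isometry. Given $\epsilon>0$, pick $n_0$ with $\rho(T^{n_0}x,y)<\epsilon$ and set $z=T^{-n_0}y\in K$, so that $T^{n_0}z=y$. Then for every $n\ge n_0$, repeated nonexpansiveness gives
\[
\rho(T^nx,T^nz)=\rho\big(T^{n-n_0}(T^{n_0}x),\,T^{n-n_0}y\big)\le\rho(T^{n_0}x,y)<\epsilon.
\]
Consequently $\limsup_{N\to\infty}\frac{1}{N}\sum_{n=1}^N\rho(T^nx,T^nz)\le\epsilon$, and the same holds with $d$ in place of $\rho$; as $\epsilon>0$ is arbitrary, $x\in\mbox{\rm Basin}(K)$, so the flow is {\bf MMA}. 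With both properties in hand, Theorem~\ref{main1} yields the desired linear disjointness. The only genuinely delicate step is the analysis of the eventual image $X_\infty$ in the non-surjective case, namely identifying $T|_{X_\infty}$ as an invertible isometry so that the limit point $y$ admits a backward iterate $z\in K$ whose forward orbit shadows that of $x$ uniformly; when $T$ is already a homeomorphism this step is unnecessary, since then $x$ lies in its own minimal orbit closure and one may simply take $z=x$.
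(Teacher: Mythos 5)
Your proposal is correct and follows essentially the same route as the paper: both reduce to Theorem~\ref{main1} by showing an equicontinuous flow is {\bf MMA} and {\bf MMLS}, both pass to the eventual image $X_\infty=\bigcap_n T^nX$ where $T$ is a surjective equicontinuous map (hence a homeomorphism decomposing into minimal sets), and both pull a limit point of the orbit of $x$ back to a point $z$ whose forward orbit $\epsilon$-shadows that of $x$ from some time on. Your auxiliary metric $\rho=\sup_n d(T^n\cdot,T^n\cdot)$ and the nonexpansive-surjection-is-an-isometry fact are just a repackaging of the equicontinuity estimate the paper uses directly; the only cosmetic point is that your bound gives $\limsup\le\epsilon$ rather than $<\epsilon$, which is fixed by running the argument with $\epsilon/2$ as the paper does.
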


\begin{proof}
Let $\mathcal{X}=(X, T)$ be an equicontinuous flow.
The sequence of compact sets $\{ T^n X\}_{n=1}^{\infty}$ decrease to a non-empty compact set,
which we denote by $X_\infty$. The sub-flow $T: X_\infty \to X_\infty$ is surjective and equicontinuous and thus a homeomorphism.
The set $X_\infty$ is decomposed into minimal subsets.

We claim that each point $x\in X$ is mean attracted to some minimal set.  In fact, the sequence
$T^nx$ admits a subsequence $T^{n_j} x$ converging to a limit point $x_\infty$,  which must belong to
$X_\infty$. Recall the definition of equicontinuity for $T$: for any $\epsilon >0$ there exists $\eta >0$ such that
$$
    d(u, v)<\eta \Rightarrow d(T^n u, T^n v) <\frac{\epsilon}{2} \ \ \ (n=1, 2, \cdots).
$$
Take $j_0$ sufficiently large such that $d(T^{n_{j_0}} x, x_\infty) < \eta$. As $T: X_\infty \to X_\infty$
is a homeomorphism, there exist $z\in X_\infty$ such that $x_\infty = T^{n_{j_0}} z$ so
$$
d(T^{n_{j_0}} x, T^{n_{j_0}} z) < \eta.
$$
By the equicontinuity, we have
$$
 d(T^{n_{j_0} +n} x, T^{n_{j_0} +n} z) <\frac{\epsilon}{2}  \ \ \ (n=1, 2, \cdots).
$$
It follows that
$$
   \limsup_{N\to \infty} \frac{1}{N}\sum_{n=1}^{N} d(T^n x, T^n z)\le  \frac{\epsilon}{2} <   \epsilon.
$$
Thus we have proved that $x$ is mean attracted to
$$
K:=\overline{O(z)}=\overline{O(x_\infty)},
$$
where $O(\cdot)$ means the forward orbit. So we have proved that this flow is {\bf MMA}. But it is equicontinuous on each minimal subset, so it is also {\bf MMLS}.
\end{proof}

\medskip
\begin{remark}~\label{R1b}
As one can see in the proof of Theorem~\ref{main1}, 
we don't really need that the spectrum of the sequence ${\bf c}=(c_{n})$ is empty. What we really need is that the spectrum of ${\bf c}$ as a subset in the circle $S^1$ is disjoint from the support of $\sigma_f$ in the circle $S^1$.  In other words, the oscillating condition on ${\bf c}$ in Theorem~\ref{main1} can be relaxed  to that 
$$
\mathcal{Z}({\bf c}) \subseteq \hbox{supp}(\sigma_f) .
$$
\end{remark} 

As a direct consequence of the proof of Theorem~\ref{main1} and Remark~\ref{R1b}, we have that 

\medskip
\begin{corollary}~\label{M2}
	Suppose $S^{1} =\{ z\in \mathbb{C}\;|\; |z|=1\}$ is the unit circle. Let $R_\alpha (z) =e^{2\pi i \alpha} z$ 
	be the rigid rotation where $\alpha \in (0, 1)$ is irrational. 
	Any sequence ${\bf c}=(c_{n})$ satisfying the growth condition (\ref{growth}) 
	such that $\alpha \mathbb{Z}\subset \mathcal{Z}({\bf c})$
	is linearly disjoint from the flow $\mathcal{X}=(S^{1}, R_\alpha)$. 
\end{corollary}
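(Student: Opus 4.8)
The plan is to recognize $\mathcal{X}=(S^1,R_\alpha)$ as a minimal, equicontinuous system and then to re-run the spectral argument of Theorem~\ref{main1}, exploiting the relaxation described in Remark~\ref{R1b}. Since $R_\alpha$ is an isometry of $S^1$, it is equicontinuous, hence {\bf MLS}; and because $\alpha$ is irrational it is minimal, so $X=S^1$ is itself the unique minimal set and equals its own basin of attraction. In particular the flow is {\bf MMA} and {\bf MMLS}, and the unique $R_\alpha$-invariant probability measure is the Haar (Lebesgue) measure $m$ on $S^1$.

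First I would fix $f\in C(S^1)$ and set $S_Nf(z)=\frac{1}{N}\sum_{n=1}^N c_n f(R_\alpha^n z)$. By the growth condition (\ref{growth}) and Proposition~\ref{meaneq} (applicable since the flow is {\bf MLS}), the family $\{S_Nf\}$ is equicontinuous and uniformly bounded in $C(S^1)$, hence precompact by Arzel\`a--Ascoli. As in the proof of Theorem~\ref{main1}, it then suffices to show $S_Nf\to 0$ in $L^2(m)$: any subsequential uniform limit $g$ would vanish $m$-almost everywhere, and since $m$ has full support and $g$ is continuous, $g\equiv 0$; precompactness then forces $S_Nf\to 0$ uniformly, which gives (\ref{ld}) for every $z\in S^1$.

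The heart of the matter is the explicit spectral measure. Writing $e_k(z)=z^k$ and $Uf=f\circ R_\alpha$, one has $Ue_k=e^{2\pi i k\alpha}e_k$, so the eigenvalues are $e^{2\pi i k\alpha}$, $k\in\mathbb{Z}$. Expanding $f=\sum_k \hat f(k)e_k$, the spectral measure of $f$ with respect to $(S^1,R_\alpha,m)$ is the purely atomic measure $\sigma_f=\sum_{k\in\mathbb{Z}}|\hat f(k)|^2\,\delta_{k\alpha\,(\mathrm{mod}\,1)}$, concentrated on the countable set $\alpha\mathbb{Z}\,(\mathrm{mod}\,1)$. By the spectral lemma,
$$
\|S_Nf\|_{L^2(m)}^2=\int_{S^1}\Big|\frac{1}{N}\sum_{n=1}^N c_n e^{2\pi i n t}\Big|^2\,d\sigma_f(t),
$$
and $\big|\frac{1}{N}\sum c_n e^{2\pi i nt}\big|=|\sigma_N(-t)|$. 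The growth condition bounds the integrand uniformly by a constant $C^2$ via H\"older's inequality, so it is dominated by a $\sigma_f$-integrable function.

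The final, and only delicate, step is to justify that the integrand tends to $0$ for $\sigma_f$-almost every $t$. Here I would emphasize that one does not need the full oscillation of $\mathbf{c}$: because $\sigma_f$ assigns all its mass to the countable set $\{k\alpha\,(\mathrm{mod}\,1):k\in\mathbb{Z}\}$, it suffices that $|\sigma_N(-k\alpha)|\to 0$ for every $k$. Since $\{-k\alpha\,(\mathrm{mod}\,1):k\in\mathbb{Z}\}=\alpha\mathbb{Z}\,(\mathrm{mod}\,1)$, this is exactly the hypothesis $\alpha\mathbb{Z}\subset\mathcal{Z}(\mathbf{c})$. The Lebesgue dominated convergence theorem then yields $\|S_Nf\|_{L^2(m)}^2\to 0$, completing the argument. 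I expect this last point to be the main conceptual obstacle: although the \emph{support} of $\sigma_f$ (the closure of its atoms) is all of $[0,1)$ when $\alpha$ is irrational, dominated convergence only requires almost-everywhere convergence, so oscillation on the countable set $\alpha\mathbb{Z}$---rather than on all of $[0,1)$---is precisely what the hypothesis supplies.
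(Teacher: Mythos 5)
Your proof is correct and follows essentially the same route as the paper: identify the unique invariant measure as Lebesgue measure, compute the spectral measure $\sigma_f=\sum_{k}|\hat f(k)|^2\delta_{k\alpha}$ explicitly, and run the spectral-lemma/dominated-convergence argument of Theorem~\ref{main1} using only oscillation at the atoms $\alpha\mathbb{Z}$. Your closing observation---that what matters is $\sigma_f$-almost-everywhere convergence on the countable set of atoms rather than on the topological support (which can be all of $[0,1)$)---is a worthwhile sharpening of the paper's Remark~\ref{R1b}, but it does not change the method.
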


\begin{proof} We use the notation in the proof of Theorem~\ref{main1}.  The rotation $R_\alpha$ admits the Lebesgue measure as the  unique invariant measure and the support of $\sigma_f$ is
	contained in $\alpha \mathbb{Z}$. Actually it is easy to check that
	$$
	\sigma_f = \sum_{n\in \mathbb{Z}} |\widehat{f}(n)|^2 \delta_{e^{2\pi i\alpha n}}.
	$$ 
\end{proof}

\medskip

\begin{remark}\label{R1}
The idea of using  the spectral lemma in the proof of Theorem~\ref{main1} comes from \cite{FS1} where the ergodic Hilbert transforms
associated to irrational rotations are studied. 
\end{remark}
 
\medskip

We will apply Theorem~\ref{main1} to several different flows. In Section 5, we apply to all $p$-adic polynomial flows and $p$-adic rational flows. In Section 6, we apply to all automorphisms and some affine maps of $2$-torus with zero topological entropy. In Section 7, we apply to all 
Feigenbaum zero topological entropy flows. In Section 8, we apply to all orientation-preserving irrational circle homeomorphisms.

\section{\bf $p$-Adic Polynomial Flows and $p$-Adic Rational Flows}

We give here some examples of equicontinuous flows in the fields of
$p$-adic numbers. These flows share a very nice minimal decomposition (\cite{FL11}, \cite{FFLW}). We can compare this decomposition with Auslander's star closed decomposition \cite{Auslander}.

Let $p\ge 2$ be a prime number and let $\mathbb{Z}_p$ be the ring of $p$-adic integers.
Let $\mathbb{Z}_p[x]$ be the ring of all polynomials with coefficients in $\mathbb{Z}_p$.
Then every polynomial $P\in \mathbb{Z}_p[x]$ defines a flow $\mathcal{X}=(\mathbb{Z}_p, P)$. It is trivial that
 $P$ is $1$-Lipschitz function, i.e.
$$
|P(x) - P(y)|_p\le |x -  y|_{p}  \quad (\forall x, y\in \mathbb{Z}_p)
$$
where $|\cdot|_p$ is the $p$-adic norm on $\mathbb{Z}_p$. So, the flow $\mathcal{X}=(\mathbb{Z}_p, P)$
is equicontinuous. In particular, the adding machine $add (x)=x +1 \in \mathbb{Z}_p[x]$ is a special case
and it is actually an equicontinuous homeomorphism.

Corollary~\ref{M2} gives us the following result.

\medskip
\begin{corollary}~\label{pp}
Any  oscillating sequence ${\bf c} =(c_{n})$ satisfying the growth condition (\ref{growth})
is linearly disjoint from all $p$-adic polynomial flows $\mathcal{X}=(\mathbb{Z}_p, P)$ for $P\in \mathbb{Z}_p[x]$.\end{corollary}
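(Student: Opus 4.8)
The plan is to reduce the statement entirely to the equicontinuous case already settled in Corollary~\ref{M2}. The whole content is that the flow $\mathcal{X}=(\Zp,P)$ is equicontinuous; once that is in hand, Corollary~\ref{M2} yields the linear disjointness immediately, since an oscillating sequence satisfying the growth condition (\ref{growth}) is linearly disjoint from every equicontinuous flow. So the corollary is, strictly speaking, a direct specialization of the general equicontinuous result, and the only thing to verify is equicontinuity.

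First I would record the basic framework: $\Zp$, equipped with the $p$-adic metric $d(x,y)=|x-y|_p$, is a compact metric space, and every $P\in\Zp[x]$ is a continuous self-map, so that $(\Zp,P)$ is genuinely a flow in the sense of the paper. The key algebraic fact is that $P$ is $1$-Lipschitz. Writing $P(x)=\sum_i a_i x^i$ with $a_i\in\Zp$ and using $x^i-y^i=(x-y)(x^{i-1}+x^{i-2}y+\cdots+y^{i-1})$, one gets a factorization $P(x)-P(y)=(x-y)\,Q(x,y)$, where $Q$ is a two-variable polynomial with coefficients in $\Zp$. For $x,y\in\Zp$ the value $Q(x,y)$ lies in $\Zp$, hence $|Q(x,y)|_p\le 1$, and multiplicativity of the $p$-adic norm gives $|P(x)-P(y)|_p\le|x-y|_p$. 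This is precisely the inequality already displayed in the text preceding the corollary.

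Next I would pass this estimate to the iterates. Since a composition of $1$-Lipschitz maps is again $1$-Lipschitz, one has $|P^n(x)-P^n(y)|_p\le|x-y|_p$ for every $n\ge 0$. Equicontinuity of the family $\{P^n\}_{n\ge 0}$ is then immediate: given $\epsilon>0$, the choice $\delta=\epsilon$ works uniformly in $n$. Hence $\mathcal{X}=(\Zp,P)$ is equicontinuous, and invoking Corollary~\ref{M2} finishes the proof.

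There is no real obstacle here. The single point requiring (very minor) care is the non-archimedean $1$-Lipschitz estimate, and even that is purely formal once one observes that $\Zp$ is a ring every element of which has norm at most $1$, so that $Q(x,y)\in\Zp$ forces $|Q(x,y)|_p\le 1$. In contrast to later sections, where equicontinuity fails and one must work through the full \textbf{MMA}/\textbf{MMLS} machinery of Theorem~\ref{main1}, the $p$-adic polynomial case collapses to the isometry-like behavior of $1$-Lipschitz maps and therefore follows from the equicontinuous corollary alone.
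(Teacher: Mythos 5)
Your proposal is correct and follows exactly the paper's route: the text preceding the corollary notes that every $P\in\Zp[x]$ is $1$-Lipschitz for the $p$-adic metric, hence $(\Zp,P)$ is equicontinuous, and the corollary is then read off from Corollary~\ref{M2}. Your only addition is to spell out the (elementary) verification of the $1$-Lipschitz bound, which the paper asserts as trivial.
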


It was proved in \cite{FL11} that for a polynomial flow $\mathcal{X}=(\mathbb{Z}_p, P)$ as in Corollary~\ref{pp}, we have the minimal
decomposition
$$
    \mathbb{Z}_p = \mathcal{P} \sqcup\mathcal{M}\sqcup \mathcal{B}
$$
where $ \mathcal{P} $ is the finite set consisting of all periodic points of
$P$, $ \mathcal{M} = \bigsqcup_i \mathcal{M} _i$ is the union of all (at most countably
many) clopen invariant sets such that each $\mathcal{M}_i$ is a finite union
of balls and each subsystem $P: \mathcal{M}_i \to \mathcal{M}_i$ is minimal, and
points in $\mathcal{B}$ lie in the attracting basin of a periodic orbit or of
a minimal sub-flow. As we proved in Corollary~\ref{M2}, points in $\mathcal{B}$
are mean attracted to a periodic cycle or a minimal set $M_i$.
\medskip

More generally, we can consider $p$-adic rational flow in $p$-adic field. Let $\mathbb{Q}_p$ be the field of $p$-adic numbers.
 Any point in the projective line $\P$ may be given in homogeneous coordinates by a pair
$[x_1 : x_2]$
of points in $\Qp$ which are not both zero. Two such pairs $[x_1 : x_2]$ and $[\lambda x_1 : \lambda x_2]$
with nonzero factor $\lambda \in \Q_p^*$ are identified.
The field $\Qp$ may be identified with the subset of $\P$ given by
$$\left\{[x : 1] \in \mathbb P^1(\Qp) \mid x \in \Qp\right\}.$$
This subset covers all points  in $\P$ except one: the point  of infinity, which may be given as
$\infty = [1 : 0].$
The spherical metric defined on $\P$ is analogous to the standard spherical metric on the
Riemann sphere. If $u=[x_1,y_1]$ and $v=[x_2,y_2]$ are two points in $\P$, we define
 $$\rho(u,v)=\frac{|x_1y_2-x_2y_1|_p}{\max\{|x_1|_{p},|y_1|_{p}\}\max\{|x_2|_{p},|y_{2}|_{p}\}}$$
 or, viewing $\mathbb{P}^{1}(\Qp)$ as $\Qp\cup\{\infty\}$, for $z_1,z_2 \in \Qp\cup \{\infty\}$
 we define
 $$\rho(z_1,z_2)=\frac{|z_1-z_2|_{p}}{\max\{|z_1|_{p},1\}\max\{|z_2|_{p},1\}}  \qquad\mbox{if~}z_{1},z_{2}\in \Qp,$$
 and
 $$\rho(z,\infty)=\left\{
                    \begin{array}{ll}
                      1, & \mbox{if $|z|_{p}\leq 1$;} \\
                      1/|z|_{p}, & \mbox{if $|z|_{p}> 1$.}
                    \end{array}
                  \right.
 $$
Remark that the restriction of the spherical metric on the ring $\Zp:=\{x\in \Q_p, |x|\leq 1\}$
of $p$-adic integers is same to the metric induced by the absolute value $|\cdot|_p$.

A rational map $R\in \Qp(z)$ induces a map  on  $\P$, which we still denote as $R$.
Rational maps  are  always Lipschitz continuous on $\P$
with respect to the spherical metric (\cite[Theorem 2.14]{SilvermanGTM241}).
Rational maps  with  good reduction  are  $1$-Lipschitz  continuous
(\cite[p. 59]{SilvermanGTM241}) in the sense that
$$
\rho(R(u),R(v))\leq \rho(u,v), \quad \forall u,v \in \P.
$$

\medskip
\begin{corollary}~\label{rp}
Any oscillating sequence ${\bf c}=(c_{n})$ satisfying the growth condition (\ref{growth}) is linearly disjoint from all $p$-adic rational flows $\mathcal{X}=(\P, R)$, where $R \in \mathbb{Q}_p(x)$ is a rational map  with good reduction.
\end{corollary}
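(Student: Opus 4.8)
The plan is to reduce the statement to the already-established corollary on equicontinuous flows (Corollary~\ref{M2}) by showing that a good-reduction rational map generates an equicontinuous flow on $\P$. The entire input is the non-expanding estimate recalled immediately before the statement: because $R$ has good reduction, it is $1$-Lipschitz for the spherical metric, so $\rho(R(u),R(v))\leq\rho(u,v)$ for all $u,v\in\P$.

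First I would pass this bound to every iterate. Since a composition of $1$-Lipschitz maps is again $1$-Lipschitz, a one-line induction on $n$ gives $\rho(R^n(u),R^n(v))\leq\rho(u,v)$ for all $u,v\in\P$ and every $n\geq 0$. Equicontinuity then follows formally from Definition~\ref{eqcn}: given $\epsilon>0$, the choice $\delta=\epsilon$ forces $\rho(R^n u,R^n v)\leq\rho(u,v)<\epsilon$ simultaneously for all $n$ whenever $\rho(u,v)<\delta$. As $\P$ is compact for $\rho$ and $R$ is continuous, $\mathcal{X}=(\P,R)$ is a genuine flow, and it is equicontinuous.

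With equicontinuity in hand, the corollary on equicontinuous flows applies directly: any oscillating sequence ${\bf c}=(c_n)$ obeying the growth condition (\ref{growth}) is linearly disjoint from every equicontinuous flow, hence from $\mathcal{X}=(\P,R)$, which is exactly the assertion. Equivalently, one could bypass the corollary and observe that equicontinuity makes the flow both {\bf MMA} and {\bf MMLS}, exactly as in its proof, and then quote Theorem~\ref{main1}.

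I do not expect a genuine obstacle here; once the $1$-Lipschitz property is available the argument is purely formal, equicontinuity of a non-expanding map being immediate. The only point deserving attention is that the estimate must hold globally on all of $\P$ rather than merely on a single reduction-invariant ball, and this is precisely what good reduction secures through the cited bound of Silverman. This is where the hypothesis is genuinely used: without good reduction $R$ is only Lipschitz with a constant that may exceed $1$, the iterates $R^n$ need no longer be uniformly equicontinuous, and the reduction to the equicontinuous case would break down.
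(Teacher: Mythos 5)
Your proposal is correct and follows exactly the route the paper intends: the good-reduction hypothesis gives the $1$-Lipschitz bound for the spherical metric, iterates remain $1$-Lipschitz, so the flow is equicontinuous and Corollary~\ref{M2} (equivalently, {\bf MMA} plus {\bf MMLS} and Theorem~\ref{main1}) applies, just as in the parallel polynomial case of Corollary~\ref{pp}. No gap.
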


It was proved in~\cite{FFLW} that for a rational function with good reduction of order at least $2$,  we have a minimal
decomposition,
$$
    \P = \mathcal{P} \sqcup\mathcal{M}\sqcup \mathcal{B}.
$$
which is similar to the case of $p$-adic polynomial flow.

One special case of Corollary~\ref{rp} is that 

\medskip
\begin{corollary}~\label{psarnak}
The M\"obius function is linearly disjoint from all $p$-adic polynomial flows $(\mathbb{Z}_p, P)$ and all $p$-adic rational flow  with good reduction $(\mathbb{P}^1(\mathbb{Q}_p), R)$.
\end{corollary}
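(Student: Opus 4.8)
The plan is to recognize Corollary~\ref{psarnak} as the specialization of the two preceding corollaries, Corollary~\ref{pp} and Corollary~\ref{rp}, to the single oscillating sequence furnished by the M\"obius function. Nothing genuinely new has to be proved; the entire content reduces to checking that $\mu$ satisfies the two standing hypotheses of those corollaries, namely the oscillating condition (Definition~\ref{os}) and the growth condition (\ref{growth}).

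First I would verify that $\big(\mu(n)\big)$ is an oscillating sequence. This is precisely the content recorded in Example~\ref{ex4}, which is Davenport's theorem: the Cesaro means
$$
\sigma_N(t) = \frac{1}{N}\sum_{n=1}^{N} \mu(n) e^{-2\pi i n t}
$$
tend to $0$ as $N\to\infty$, in fact uniformly in $t\in[0,1)$ with the quantitative bound $O(1/\log^h N)$ for every $h>0$. Thus $\mathcal{Z}(\{\mu(n)\}) = [0,1)$, so $\mu$ is oscillating in the sense of Definition~\ref{os}.

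Next I would confirm the growth condition. Since $|\mu(n)|\le 1$ for every $n$, we have $\sum_{n=1}^{N}|\mu(n)|^{\lambda}\le N$ for any $\lambda>1$, so (\ref{growth}) holds (one may take any $\lambda>1$, for instance $\lambda=2$). Hence $\mu$ is an oscillating sequence satisfying the growth condition, and the hypotheses of both Corollary~\ref{pp} and Corollary~\ref{rp} are met.

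With these two verifications in hand the conclusion is immediate: Corollary~\ref{pp} gives linear disjointness of $\mu$ from every $p$-adic polynomial flow $(\Zp, P)$ with $P\in\Zp[x]$, and Corollary~\ref{rp} gives linear disjointness of $\mu$ from every $p$-adic rational flow $(\P, R)$ with $R\in\Qp(x)$ of good reduction. There is no real obstacle here; the substantive work was already carried out upstream, in Theorem~\ref{main1} (via the spectral-lemma argument) and in the observation that polynomials over $\Zp$ and good-reduction rational maps are $1$-Lipschitz, hence equicontinuous on each minimal set, so that these flows are {\bf MMA} and {\bf MMLS}. This corollary merely assembles those facts for the arithmetically distinguished sequence $\mu$.
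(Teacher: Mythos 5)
Your proposal is correct and matches the paper's (implicit) argument exactly: the paper states this corollary as a special case of Corollaries~\ref{pp} and~\ref{rp}, justified precisely by Davenport's theorem (Example~\ref{ex4}) for the oscillating condition and the trivial bound $|\mu(n)|\le 1$ for the growth condition. Nothing further is needed.
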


\section{\bf Automorphisms on $2$-Torus with Zero Topological Entropy}

Let
$$
\mathbb{T}^2=\mathbb{R}^2/\mathbb{Z}^2
$$
be the $2$-torus. Let $SL(\mathbb{Z},2)$ be the space of all matrices $A$ of integral entries such that $\det A=\pm 1$.
Then all $A{\bf x}$ of $\mathbb{T}^2$ for $A\in SL(\mathbb{Z},2)$ represent all automorphisms of $\mathbb{T}^{2}$.
Affine maps on $\mathbb{T}^{2}$ are of the form
$$
T({\bf x}) = A{\bf x} + {\bf b}
$$
where $A$ is an automorphism of $\mathbb{T}^2$ and ${\bf b}$ is an element in $\mathbb{T}^2$.

Considered $T$ as a flow on $\mathbb{T}^2$,  the entropy is equal to the logarithm
of the sum of the modulus of all eigenvalues of $A$ of modulus strictly larger than $1$.
Thus $T$ is of zero topological entropy if and only if all eigenvalues of $A$ are of modulus $1$.

Let $\|\cdot\|_{\mathbb{R}^2}$ be the Euclidean norm on $\mathbb{R}^2$. Recall that
the induced metric on  $\mathbb{T}^2$ is defined by
$$
    \|{\bf x}-{\bf y}\|_{\mathbb{T}^2} = \inf_{{\bf n}\in \mathbb{Z}^2} \|{\bf x}-{\bf y}-{\bf n}\|_{\mathbb{R}^2}
$$
Also recall that the Hermitian norm on $\|\cdot\|_{\mathbb{C}^2}$ on $\mathbb{C}^2$ is defined by
$$
    \|{\bf z}\|_{\mathbb{C}^{2}} = z_1 \overline{z}_1 + z_2 \overline{z}_2, \quad {\bf z} = (z_1, z_2)\in \mathbb{C}^2.
$$
The space $\mathbb{R}^2$ is considered as a subspace of $\mathbb{R}^2$. It is clear that
for ${\bf x}\in \mathbb{R}^2$, we have
$$
  \|{\bf x}\|_{\mathbb{T}^2}\le \|{\bf x}\|_{\mathbb{R}^2}= \|{\bf x}\|_{\mathbb{C}^2}.
$$
The following proposition actually hods in higher dimension, but we just state in the dimension $2$.

\begin{proposition}\label{diag} Let $A\in SL(\mathbb{Z}, 2)$. Suppose that $A$ is diagonalizable in the field of complex numbers and have all its eigenvalues of modulus $1$. Then there exists a constant $C$ such that for all ${\bf n}\in \mathbb{Z}$
and all ${\bf x}\in \mathbb{T}^2$ we have
$$
  \|A^n {\bf x}\|_{\mathbb{T}^2}\le C \|{\bf x}\|_{\mathbb{T}^2}.
$$
Thus the flow $\mathcal{X}= (\mathbb{T}^{2}, x \mapsto A{\bf x} +{\bf b})$ for every ${\bf b}\in \mathbb{T}^{2}$ is equicontinuous.
\end{proposition}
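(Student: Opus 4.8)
The plan is to reduce the entire statement to a single uniform bound on the operator norms of the powers $A^n$ acting on $\mathbb{R}^2$, and then to transfer that bound to the torus using only the integrality of $A$. First I would diagonalize $A$ over $\mathbb{C}$: by hypothesis there is an invertible complex matrix $P$ with $A = P D P^{-1}$, where $D = \mathrm{diag}(\lambda_1, \lambda_2)$ and $|\lambda_1| = |\lambda_2| = 1$. Then $A^n = P D^n P^{-1}$ for every $n \in \mathbb{Z}$, and since $D^n = \mathrm{diag}(\lambda_1^n, \lambda_2^n)$ has all diagonal entries of modulus one, $D^n$ preserves the Hermitian norm $\|\cdot\|_{\mathbb{C}^2}$. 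Consequently, for every $\mathbf{z} \in \mathbb{C}^2$,
$$
\|A^n \mathbf{z}\|_{\mathbb{C}^2} = \|P D^n P^{-1}\mathbf{z}\|_{\mathbb{C}^2} \le \|P\|_{\mathrm{op}}\,\|D^n P^{-1}\mathbf{z}\|_{\mathbb{C}^2} = \|P\|_{\mathrm{op}}\,\|P^{-1}\mathbf{z}\|_{\mathbb{C}^2} \le \|P\|_{\mathrm{op}}\|P^{-1}\|_{\mathrm{op}}\,\|\mathbf{z}\|_{\mathbb{C}^2}.
$$
Setting $C := \|P\|_{\mathrm{op}}\|P^{-1}\|_{\mathrm{op}}$, the condition number of $P$, which is independent of $n$, I obtain $\|A^n \mathbf{z}\|_{\mathbb{C}^2} \le C\|\mathbf{z}\|_{\mathbb{C}^2}$ for all $n \in \mathbb{Z}$.

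Next I would push this bound down to the torus. Given $\mathbf{x} \in \mathbb{T}^2$, I pick the representative $\tilde{\mathbf{x}} \in \mathbb{R}^2$ realizing the distance to the lattice, so that $\|\tilde{\mathbf{x}}\|_{\mathbb{R}^2} = \|\mathbf{x}\|_{\mathbb{T}^2}$. Because $A$ has integer entries, $A^n$ maps $\mathbb{Z}^2$ into $\mathbb{Z}^2$, and hence $A^n\tilde{\mathbf{x}}$ is a lift of $A^n\mathbf{x}$. Since the torus norm is an infimum over lattice representatives and $\|\cdot\|_{\mathbb{R}^2} = \|\cdot\|_{\mathbb{C}^2}$ as recorded above, this gives
$$
\|A^n\mathbf{x}\|_{\mathbb{T}^2} \le \|A^n\tilde{\mathbf{x}}\|_{\mathbb{R}^2} = \|A^n\tilde{\mathbf{x}}\|_{\mathbb{C}^2} \le C\|\tilde{\mathbf{x}}\|_{\mathbb{C}^2} = C\|\mathbf{x}\|_{\mathbb{T}^2},
$$
which is exactly the claimed inequality.

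Finally, equicontinuity of the affine map $T(\mathbf{x}) = A\mathbf{x} + \mathbf{b}$ will follow immediately. Iterating yields $T^n(\mathbf{x}) = A^n\mathbf{x} + \sum_{k=0}^{n-1}A^k\mathbf{b}$, so the additive term cancels in differences and $T^n\mathbf{x} - T^n\mathbf{y} = A^n(\mathbf{x} - \mathbf{y})$ in $\mathbb{T}^2$. Applying the norm bound to $\mathbf{x} - \mathbf{y}$ gives $\|T^n\mathbf{x} - T^n\mathbf{y}\|_{\mathbb{T}^2} \le C\|\mathbf{x}-\mathbf{y}\|_{\mathbb{T}^2}$ uniformly in $n$; given $\epsilon > 0$, the choice $\delta = \epsilon/C$ then certifies equicontinuity.

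The single genuine point is the uniform bound on $\|A^n\|_{\mathrm{op}}$, and this is precisely where diagonalizability is indispensable: for a non-diagonalizable $A$ with a repeated eigenvalue of modulus one---such as the shear underlying $T_t$ in Remark~\ref{neq}---the powers $A^n$ grow linearly and no such constant $C$ can exist, so the flow fails to be equicontinuous. Everything else is bookkeeping: the transfer to the torus uses only that $A$ preserves the integer lattice, and the passage from the linear to the affine case is automatic.
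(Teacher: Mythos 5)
Your proposal is correct and follows essentially the same route as the paper: diagonalize $A=PDP^{-1}$, observe that $D^n$ is a Hermitian isometry because the eigenvalues lie on the unit circle, take $C=\|P\|\,\|P^{-1}\|$, and descend to $\mathbb{T}^2$ by choosing a lattice-optimal lift and using that $A^n$ preserves $\mathbb{Z}^2$. Your explicit derivation of equicontinuity for the affine map (cancellation of the translation part in differences) and the closing remark on why diagonalizability is essential are details the paper leaves implicit, but they do not change the argument.
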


\begin{proof} Let $A=PDP^{-1}$ where $D$ is the diagonal matrix with the  eigenvalues $\lambda_1, \lambda_{2}$ of $A$
on the diagonal and $P$ is an invertible complex matrix. Then
$$
  \|A^n {\bf x}\|_{\mathbb{R}^2}=  \|P D^n P^{-1}{\bf x}\|_{\mathbb{R}^2}
  = \|P D^n P^{-1}{\bf x}\|_{\mathbb{C}^2}
  \le \|P\|_{\mathbb{C}^2} \|D^n {\bf z}\|_{\mathbb{C}^2}
  $$
  where ${\bf z}= P^{-1}{\bf x}=(z_{1},z_{2})$. However
$$
   \|D^n {\bf z}\|_{\mathbb{C}^2}^2= \sum_{j=1}^{2} |\lambda_j^n z_j|^2 = \sum_{j=1}^{2} |z_j|^2
   = \|P^{-1} {\bf x}\|_{\mathbb{C}^2}^2.
$$
Thus
$$
    \|A^n {\bf x}\|_{\mathbb{R}^2} \le \|P\|_{\mathbb{C}^2} \|P^{-1}\|_{\mathbb{C}^2} \|{\bf x}\|_{\mathbb{R}^2}.
$$
Let ${\bf n}\in \mathbb{Z}^{2}$ such that $\|{\bf x}\|_{\mathbb{T}^2} = \|{\bf x}-{\bf n}\|_{\mathbb{R}^2}$.
Replace ${\bf x}$ by ${\bf x} -{\bf n}$ in the above
inequality, we get
$$
    \|A^n ({\bf x} -{\bf n})\|_{\mathbb{R}^2} \le \|P\|_{\mathbb{C}^2} \|P^{-1}\|_{\mathbb{C}^2} \|{\bf x}\|_{\mathbb{T}^2}.
$$
Since $A^n {\bf n} \in \mathbb{Z}^2$ for all $n\geq 1$, finally we get for $C= \|P\|_{\mathbb{C}^2} \|P^{-1}\|_{\mathbb{C}^2}$,
$$
 \|A^n {\bf x}\|_{\mathbb{T}^2} \le C \|{\bf x}\|_{\mathbb{T}^2}.
$$
\end{proof}

Combining this proposition and Corollary~\ref{M2}, we have that

\medskip
\begin{corollary}~\label{diagcoro}
Suppose $A\in SL(\mathbb{Z},2)$ is diagonalizable in complex field. 
And all its eigenvalues have modulus $1$. Then any oscillating sequence ${\bf c}=(c_{n})$ satisfying the growth condition (\ref{growth})
 is linearly disjoint from the flow
$$
\mathcal{X} = (\mathbb{T}^{2}, T{\bf x}: =A{\bf x}+{\bf b})
$$
for every ${\bf b}\in \mathbb{T}^{2}$.
\end{corollary}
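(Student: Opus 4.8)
The plan is to deduce this statement directly by combining Proposition~\ref{diag} with the equicontinuous case already settled in Corollary~\ref{M2}; no new machinery is needed, and the only genuinely new observation concerns the passage from the linear automorphism $A$ to the affine map $T({\bf x})=A{\bf x}+{\bf b}$.

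First I would record that equicontinuity is insensitive to the translation part ${\bf b}$. Writing out the iterates, one has $T^n({\bf x}) = A^n{\bf x} + \big(A^{n-1}+\cdots+A+I\big){\bf b}$, so that for any two points ${\bf x},{\bf y}\in\mathbb{T}^2$ the translation contribution is identical and cancels in the difference:
$$
T^n({\bf x}) - T^n({\bf y}) = A^n({\bf x}-{\bf y}).
$$
Consequently $\|T^n{\bf x} - T^n{\bf y}\|_{\mathbb{T}^2} = \|A^n({\bf x}-{\bf y})\|_{\mathbb{T}^2}$. By hypothesis $A\in SL(\mathbb{Z},2)$ is diagonalizable with all eigenvalues of modulus $1$, so Proposition~\ref{diag} supplies a constant $C$, depending only on $A$, with $\|A^n{\bf z}\|_{\mathbb{T}^2}\le C\|{\bf z}\|_{\mathbb{T}^2}$ for every $n$ and every ${\bf z}\in\mathbb{T}^2$. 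Applying this with ${\bf z}={\bf x}-{\bf y}$ gives the uniform bound $\|T^n{\bf x}-T^n{\bf y}\|_{\mathbb{T}^2}\le C\|{\bf x}-{\bf y}\|_{\mathbb{T}^2}$, valid for all $n\ge 0$ and all ${\bf b}$. Hence, given $\epsilon>0$, the choice $\delta=\epsilon/C$ shows that $\{T^n\}$ is equicontinuous; this is exactly the conclusion already asserted at the end of Proposition~\ref{diag}.

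With equicontinuity in hand, the corollary follows at once from Corollary~\ref{M2}: any oscillating sequence satisfying the growth condition (\ref{growth}) is linearly disjoint from every equicontinuous flow. Thus ${\bf c}=(c_n)$ is linearly disjoint from $\mathcal{X}=(\mathbb{T}^2,T)$ for each fixed ${\bf b}\in\mathbb{T}^2$, which is the assertion.

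Since the two ingredients are already available, there is no real obstacle at the level of this corollary; all the analytic weight sits upstream. The substantive points are (i) Proposition~\ref{diag}, where diagonalizability together with the modulus-one spectrum is used to control $\|A^n{\bf z}\|_{\mathbb{T}^2}$ uniformly in $n$ --- this is precisely where the zero-entropy hypothesis enters --- and (ii) Corollary~\ref{M2}, whose proof reduces equicontinuous flows to the \textbf{MMA} and \textbf{MMLS} setting and then invokes the spectral argument of Theorem~\ref{main1}. The only thing one must be careful about here is the affine cancellation above, namely that adjoining the translation ${\bf b}$ does not destroy equicontinuity; once that is noted, the conclusion is immediate.
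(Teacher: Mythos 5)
Your proposal is correct and follows exactly the paper's route: the paper likewise obtains the corollary by combining Proposition~\ref{diag} (which already asserts equicontinuity of ${\bf x}\mapsto A{\bf x}+{\bf b}$ for every ${\bf b}$) with Corollary~\ref{M2}. Your explicit cancellation $T^n({\bf x})-T^n({\bf y})=A^n({\bf x}-{\bf y})$ is a harmless elaboration of a step the paper leaves implicit.
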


Now suppose $A$ is not diagonalizable. In this case,  $A$ has a double eigenvalue $1$ or $-1$ and $\det A=1$.
% We prove a more general result.
A $2\times 2$-square matrix is called a {\em module matrix} if all entries are integers and its determinant is $1$.

\medskip
\begin{proposition} \label{nondiag} Let $M$ be a modular matrix with $\pm 1$ as a double eigenvalue. There exists a modular
matrix $P$ such that
$$
P^{-1} M P=\pm T_{t}
$$
where $T_{t}$ is a modular matrix of the form
$$
     T_{t} = \left(
           \begin{array}{cc}
             1 & t \\
             0 & 1 \\
           \end{array}
         \right), \quad t \in \mathbb{Z}.
$$
\end{proposition}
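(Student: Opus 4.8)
The plan is to reduce everything to the case of the double eigenvalue $+1$ and then manufacture the desired integral basis of $\mathbb{Z}^2$ by a lattice argument. First I would dispose of the sign: if $M$ has $-1$ as a double eigenvalue, then $-M$ has $+1$ as a double eigenvalue and still $\det(-M)=1$, so it suffices to treat the $+1$ case. Indeed, once a modular $P$ is found with $P^{-1}(-M)P = T_t$, one gets $P^{-1}MP = -T_t$, which is of the required form $\pm T_t$. So assume from now on that $M$ has $+1$ as a double eigenvalue, i.e. $\operatorname{tr} M = 2$ and $\det M = 1$; then the Cayley--Hamilton theorem gives $(M-I)^2 = 0$. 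Set $N = M - I$. If $N = 0$ then $M = I = T_0$ and there is nothing to prove, so assume $N \neq 0$. Being a nonzero integer matrix with $N^2 = 0$, the matrix $N$ has rank exactly one, and from $N^2=0$ its image is contained in its kernel.

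Next I would run the lattice step, which is the heart of the argument. Let $L = \ker N \cap \mathbb{Z}^2$. This is a saturated (primitive) rank-one sublattice of $\mathbb{Z}^2$, hence generated by a single primitive vector $v_1$. Because $v_1$ is primitive, it extends to a $\mathbb{Z}$-basis $\{v_1, v_2\}$ of $\mathbb{Z}^2$; replacing $v_2$ by $-v_2$ if necessary, I may arrange that $\det[\,v_1 \mid v_2\,] = 1$, so that $P := [\,v_1 \mid v_2\,]$ is modular. Now $M v_1 = v_1$ since $v_1 \in \ker N$. Moreover $N v_2$ is an integer vector lying in the image of $N$, hence in $\ker N \cap \mathbb{Z}^2 = \mathbb{Z} v_1$, so $N v_2 = t\, v_1$ for some integer $t$, that is $M v_2 = v_2 + t\, v_1$. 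In the basis $\{v_1, v_2\}$ the matrix of $M$ is therefore exactly $T_t$, which means $P^{-1} M P = T_t$. Combining this with the reduction above yields $P^{-1} M P = \pm T_t$, as claimed.

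The only genuine content beyond bookkeeping is this lattice step, and the point I expect to require care is the saturation claim: that the rational line $\ker N$ meets $\mathbb{Z}^2$ in a rank-one lattice admitting a primitive generator, and that this generator completes to a determinant-one integral basis. Both are standard facts about free $\mathbb{Z}$-modules---a primitive vector of $\mathbb{Z}^2$ is part of a basis, and the intersection of a subspace with the lattice is saturated---but they are precisely where integrality is used, rather than mere conjugacy over $\mathbb{Q}$, and so they deserve to be spelled out. Everything else (the Cayley--Hamilton reduction, the rank-one nilpotency of $N$, and the final change of basis) is routine, so I would keep those steps brief and concentrate the exposition on the integral basis construction.
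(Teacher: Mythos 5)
Your proof is correct, and it takes a genuinely cleaner route than the paper's. The paper works entirely in coordinates: it writes $M=\left(\begin{smallmatrix} a & b\\ c & 2-a\end{smallmatrix}\right)$, splits off the cases $b=0$ and $c=0$, exhibits the explicit eigenvector $\mathbf{x}=\bigl(b/d,\,-(a-1)/d\bigr)$ with $d=\gcd(a-1,b)$, and then must prove the divisibility $b\mid d^{2}$ by comparing prime factorizations of $a-1$ and $b$ in order to see that $t=d^{2}/b$ is an integer, before invoking B\'ezout to produce the generalized eigenvector and checking $\det P=1$ by hand. Your argument replaces all of this with two standard facts about $\mathbb{Z}^{2}$: the sublattice $\ker N\cap\mathbb{Z}^{2}$ is saturated and hence generated by a primitive vector $v_{1}$, and a primitive vector completes to a determinant-one basis. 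The integrality of $t$, which is the only delicate point in the paper's computation, then comes for free from the containment $Nv_{2}\in\operatorname{im}N\cap\mathbb{Z}^{2}\subseteq\ker N\cap\mathbb{Z}^{2}=\mathbb{Z}v_{1}$, with no case analysis and no factorization argument. The two proofs produce the same object in the end (the paper's $\mathbf{x}$ is exactly your primitive generator, and its $t=d^{2}/b$ is your $t$ up to the choice of complement), but your version isolates where integrality actually enters and generalizes without change to unipotent elements of $SL(n,\mathbb{Z})$ of the analogous type, whereas the paper's computation is tied to the $2\times 2$ entries. You are right that the saturation claim and the completion of a primitive vector to a basis are the steps to spell out; both follow from B\'ezout, which is also the engine of the paper's argument, just applied once at the right level of abstraction instead of twice in coordinates.
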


\begin{proof} If the double eigenvalue of $M$ is $-1$. Then $-M$ has a double eigenvalue $1$.
So we only need to prove the case that the double eigenvalue of $M$ is $1$. In this case the trace of $M$ is $2$. So we can write
$$
  M = \left(
           \begin{array}{cc}
             a & b \\
             c & 2-a \\
           \end{array}
         \right),
         \quad
  M -I = \left(
           \begin{array}{cc}
             a-1 & b \\
             c & 1-a \\
           \end{array}
         \right).
$$
The fact $\det M =1$ means
\begin{equation}\label{det}
-bc = (a-1)^2.
\end{equation}

If $b=0$, we must have $a=1$ by (\ref{det}). So
$$
    M = \left(
           \begin{array}{cc}
             1 & 0 \\
             c & 1 \\
           \end{array}
         \right) =  P \left(
           \begin{array}{cc}
             1 & - c \\
             0 & 1 \\
           \end{array}
         \right) P^{-1}, \quad \mbox{\rm with} \  \ P=   \left(
           \begin{array}{cc}
             0 & 1 \\
             -1 & 0 \\
           \end{array}
         \right)
$$
Thus we are done. If $c=0$, we must have $a=1$ and there is nothing to do.

Assume  $b\not=0$ and $c\not=0$. By (\ref{det}), we have $a\not=1$ and  $c = -\frac{(a-1)^2}{b}$.
We are looking for 
$$
{\bf x} =
\left(
  \begin{array}{c}
    x_1 \\
    x_2 \\
  \end{array}
  \right)\in \mathbb{Z}^2
  $$
  which is an  eigenvector associated to $1$, i.e.
$(M-I){\bf x}=0$. This equation is equivalent to
$$
     (a-1) x_1 + b x_2 = 0\quad \hbox{and} \quad c x_{1}+ (1-a) x_{2}=0.
$$
Let $d$ be the gcd of $(a-1)$ and $b$. The above two equations become one equation
\begin{equation}\label{EE1}
    \frac{a-1}{d} x_1 + \frac{b}{d} x_2=0  
\end{equation}
We choose an integral solution to the first equation of (\ref{EE1}):
$$
{\bf x} =
\left(
  \begin{array}{c}
    x_1 \\
    x_2 \\
  \end{array}
\right) =
\left(
\begin{array}{c}
    \frac{b}{d} \\
    -\frac{a-1}{d} \\
  \end{array}
\right).
$$
Then we are looking for a generalized eigenvector 
$$
{\bf y} =
\left(
  \begin{array}{c}
    y_1 \\
    y_2 \\
  \end{array}\right)\in \mathbb{Z}^2
  $$
such that $(M-I) {\bf y} = t {\bf x}$ with $t$ an integer to be determined. The first line of the last equation is equivalent to
\begin{equation}\label{EE2}
   (a-1) y_1 + b y_2 = t \frac{b}{d}.
\end{equation}

We claim that $b|d^2$. In fact, $b|(a-1)^2$. Let us factorize $a-1$ into primes:
$$
a-1 = \pm p_1^{\alpha_1} p_2^{\alpha_2} \cdots p_r^{\alpha_r}.
$$
Then we can write
$$
    b = \pm p_1^{\beta_1} p_2^{\beta_2} \cdots p_r^{\beta_r},
    \quad (0\le \beta_j \le 2\alpha_j, 1\le j \le r).
$$
So
$$
     d = p_1^{\alpha_1 \wedge \beta } p_2^{\alpha_2\wedge \beta_2} \cdots p_r^{\alpha_r\wedge \beta_r},
$$
where $\alpha\wedge \beta=\min \{ \alpha, \beta\}$.
Then $b|d^2$ is equivalent to say that $\beta_j \le 2 (\alpha_j \wedge \beta_j)$ for all
$1\le j \le r$, which is true because  $0\le \beta_j \le 2\alpha_j$.

Take $t = b^{-1} d^2 \in \mathbb{Z}$. Then the equation (\ref{EE2}) becomes
\begin{equation}\label{EE3}
    \frac{a-1}{d} y_1 + \frac{b}{d} y_2 = 1.
\end{equation}
By the B\'ezout theorem, the equation (\ref{EE3}) admits a solution $(y_1, y_2)$ in $\mathbb{Z}^2$ because $\big( (a-1)/d, b/d\big)=1$.
Let $P=({\bf x}, {\bf y})$ be the $2\times 2$ matrix with columns ${\bf x}$ and ${\bf y}$. Let
${\bf u}=(x_1, y_1)$ and ${\bf v} =(x_2, y_2)$ be the two rows of $P$. The first equation of (\ref{EE1}) and  (\ref{EE3})
can be rewritten as
\begin{equation}\label{EE4}
     {\bf e}_{2}:= \frac{a-1}{d} {\bf u} + \frac{b}{d}{\bf v} = (0,1)
\end{equation}
Then
  $$
    \det P = \det ({\bf u}, {\bf v}) = \frac{d}{b} \det ({\bf u}, d^{-1} b {\bf v})
    = \frac{d}{b}\det ({\bf u}, {\bf e}_2)= \frac{d}{b} \cdot \frac{b}{d}=1
  $$
  where the relation (\ref{EE4}) is used for the third equality. This implies that $P$ (as well as $P^{-1}$) is a positive modular matrix and
  $$
  P^{-1} MP = T_{t}
  $$
  for some $t=b^{-1}d^{2} \in \mathbb{Z}$.
\end{proof}

Following Proposition~\ref{diag}, Proposition~\ref{nondiag}, Example~\ref{2ex} and Theorem~\ref{main1}, we have that

\medskip
\begin{theorem}~\label{nondiagthm}
Suppose $A\in SL(\mathbb{Z},2)$ and suppose all its eigenvalues have modulus $1$.
Then the flow $\mathcal{X} = (\mathbb{T}^{2}, T{\bf x}=A{\bf x})$ is {\bf MMA} and {\bf MMLS}.
 \end{theorem}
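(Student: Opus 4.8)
The plan is to split according to whether $A$ is diagonalizable over $\mathbb{C}$ and to reduce the non-diagonalizable case to the normal forms furnished by Proposition~\ref{nondiag}, exploiting that both {\bf MMA} and {\bf MMLS} are invariants of topological conjugacy. The underlying observation, which I would record first as a short lemma, is that if $h:X\to Y$ is a homeomorphism of compact metric spaces conjugating $(X,S)$ to $(Y,T)$ (that is, $h\circ S=T\circ h$), then $(Y,T)$ is {\bf MMA} (resp. {\bf MMLS}) whenever $(X,S)$ is. Indeed $h$ carries minimal subsets of $X$ bijectively onto minimal subsets of $Y$, and since $h$ and $h^{-1}$ are uniformly continuous, a point $x$ mean attracted to a minimal set $K$ forces $h(x)$ to be mean attracted to $h(K)$: taking $w=h(z)$ for the attracting point $z\in K$ and splitting the Ces\`aro average $\frac1N\sum_{n=1}^N d_Y(T^n h(x),T^n w)=\frac1N\sum_{n=1}^N d_Y(hS^nx,hS^nz)$ into the indices where $d_X(S^nx,S^nz)$ is small or large, exactly as in the proof of Proposition~\ref{meaneq}, gives the conclusion. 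The same splitting transports mean-equicontinuity, hence {\bf MLS}, from each minimal subset of $X$ to its image.

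When $A$ is diagonalizable with all eigenvalues of modulus $1$, Proposition~\ref{diag} shows that $(\mathbb{T}^2,A)$ is equicontinuous, and the argument already given in the proof of Corollary~\ref{M2} shows that an equicontinuous flow is {\bf MMA} and {\bf MMLS}. This disposes of the diagonalizable case directly.

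When $A$ is not diagonalizable it has a double eigenvalue $\pm 1$ and, by Proposition~\ref{nondiag}, there is a modular matrix $P$ with $P^{-1}AP=\pm T_t$ for some $t\in\mathbb{Z}$. Since $P$ is modular, $\mathbf{x}\mapsto P^{-1}\mathbf{x}$ is an automorphism of $\mathbb{T}^2$ conjugating $(\mathbb{T}^2,A)$ to $(\mathbb{T}^2,\pm T_t)$, so by the conjugacy-invariance lemma it suffices to treat the two flows $(\mathbb{T}^2,T_t)$ and $(\mathbb{T}^2,-T_t)$. The first is exactly Example~\ref{2ex}. For the second I would carry out the analogous fiberwise analysis: writing $S=-T_t$, one has $S(x,y)=(-x-ty,-y)$ and $S^2=T_{2t}$, so $S$ interchanges the circles $\mathbb{T}_y$ and $\mathbb{T}_{-y}$ while $S^2$ rotates each by $\pm 2ty$. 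This yields the minimal decomposition: finite minimal sets (fixed points and $2$-cycles) inside the invariant circles $\mathbb{T}_0$ and $\mathbb{T}_{1/2}$, on which $S$ acts as a reflection; the minimal set $\mathbb{T}_y\cup\mathbb{T}_{-y}$ when $2ty$ is irrational; and periodic orbits when $y\neq 0,\tfrac12$ and $2ty$ is rational. In every case each point lies in a minimal set, so $(\mathbb{T}^2,S)$ is trivially {\bf MMA}, and on each minimal set $\{S^n\}$ is equicontinuous: for $\delta$ smaller than the separation between $\mathbb{T}_y$ and $\mathbb{T}_{-y}$, two $\delta$-close points of a minimal set lie on a common fibre and all iterates $S^n$ preserve their circle distance, being either rotations of that fibre or reflections onto the partner fibre. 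Hence $(\mathbb{T}^2,S)$ is {\bf MMLS}. Combining the three cases with the lemma proves the theorem; Theorem~\ref{main1} is then what yields the disjointness corollary that follows.

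The main obstacle I anticipate is the $-T_t$ case, the one point not covered by Example~\ref{2ex}: the sign reversal of the second coordinate pairs fibres and makes $S$ fail to be an isometry of the whole torus, so one must check carefully that $\{S^n\}$ is nonetheless equicontinuous on each minimal set, which is precisely what restricting $\delta$ below the inter-circle gap secures. A secondary, more routine point is the clean statement and proof of the conjugacy-invariance lemma, which needs the uniform-continuity splitting of Ces\`aro averages rather than a naive pointwise modulus-of-continuity estimate.
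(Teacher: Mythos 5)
Your proof is correct and follows the same route as the paper: Proposition~\ref{diag} (plus the argument of Corollary~\ref{M2}) for the diagonalizable case, and Proposition~\ref{nondiag} together with Example~\ref{2ex} for the non-diagonalizable case. The two points where you go beyond what is written are exactly the points the paper leaves implicit, and your treatment of them is sound. First, the paper's Example~\ref{2ex} asserts that every zero-entropy automorphism is linearly conjugate to some $T_t$, silently absorbing the sign, whereas Proposition~\ref{nondiag} only yields $P^{-1}AP=\pm T_t$; your fiberwise analysis of $S=-T_t$ (namely $S(x,y)=(-x-ty,-y)$ swaps $\mathbb{T}_y$ and $\mathbb{T}_{-y}$, $S^2=T_{2t}$ rotates each fibre, each minimal set is either finite or a pair of circles on which $S^n$ acts fibrewise isometrically, and $\delta$ below the inter-circle gap forces $\delta$-close points of a minimal set onto a common fibre) is a correct completion of that step. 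Second, the conjugacy-invariance of {\bf MMA} and {\bf MMLS} is indeed used but never stated in the paper; your lemma is right, though for {\bf MLS} the transfer is even easier than your Ces\`aro splitting (uniform continuity of $h$ and $h^{-1}$ lets one pass the exceptional set of indices across unchanged), and here the conjugacy is linear, hence bi-Lipschitz, so the mean-attraction averages transfer up to a multiplicative constant. In short: same approach, with the paper's two small gaps correctly filled.
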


A consequence of Theorem~\ref{main1} and Theorem~\ref{nondiagthm} is the corollary.

\medskip
\begin{corollary}~\label{nondiagcoro}
Suppose $A\in SL(\mathbb{Z},2)$ and suppose all its eigenvalues have modulus $1$.
Any oscillating sequence ${\bf c}=(c_{n})$ satisfying the growth condition (\ref{growth})
is linearly disjoint from the flow $\mathcal{X}=(\mathbb{T}^{2}, T{\bf x}=A{\bf x})$.
\end{corollary}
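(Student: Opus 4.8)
The plan is to obtain this corollary by simply combining the two theorems that immediately precede it, since its hypotheses are identical to those of Theorem~\ref{nondiagthm}. First I would invoke Theorem~\ref{nondiagthm}: because $A\in SL(\mathbb{Z},2)$ has all of its eigenvalues of modulus $1$ --- equivalently, as observed at the beginning of Section~6, because the flow $\mathcal{X}=(\mathbb{T}^{2}, T{\bf x}=A{\bf x})$ has zero topological entropy --- that theorem guarantees that $\mathcal{X}$ is both \textbf{MMA} and \textbf{MMLS}.

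Having established that $\mathcal{X}$ belongs to the class of \textbf{MMA} and \textbf{MMLS} flows, I would then apply Theorem~\ref{main1} to this particular flow. That theorem states that every oscillating sequence ${\bf c}=(c_{n})$ satisfying the growth condition (\ref{growth}) is linearly disjoint from all such flows. Specializing it to $\mathcal{X}$ yields, for every $f\in C(\mathbb{T}^{2})$ and every ${\bf x}\in \mathbb{T}^{2}$,
$$
\lim_{N\to\infty} \frac{1}{N}\sum_{n=1}^{N} c_{n}\, f(T^{n}{\bf x}) = 0,
$$
and by the definition of linear disjointness in (\ref{ld}) this is exactly the desired conclusion.

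I do not anticipate any genuine obstacle here, as the entire content of the corollary has been pushed into the two cited theorems; the only point to verify is that the modulus-one eigenvalue condition is precisely what Theorem~\ref{nondiagthm} requires, and it is. The substantive work lies upstream: Theorem~\ref{nondiagthm} itself depends on reducing $A$ to the normal form $\pm T_{t}$ through Proposition~\ref{nondiag}, on handling the diagonalizable case via Proposition~\ref{diag}, and on using the explicit minimal decomposition of $(\mathbb{T}^{2}, T_{t})$ recorded in Example~\ref{2ex}, while Theorem~\ref{main1} carries the disjointness through its spectral-measure argument. If one wished to avoid citing Theorem~\ref{nondiagthm} as a black box, the alternative would be to unwind its proof and verify directly that each minimal set of $\mathcal{X}$ (a rotated circle $\mathbb{T}_{y}$ or a periodic orbit) is isometric --- so that the flow is \textbf{MMLS} --- and that every point is mean attracted to such a set --- so that the flow is \textbf{MMA} --- but this merely reproduces the content already supplied by the earlier results.
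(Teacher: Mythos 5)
Your proposal is correct and matches the paper's own argument exactly: the paper derives Corollary~\ref{nondiagcoro} precisely by combining Theorem~\ref{nondiagthm} (which gives that $\mathcal{X}=(\mathbb{T}^{2}, T{\bf x}=A{\bf x})$ is \textbf{MMA} and \textbf{MMLS} when all eigenvalues of $A$ have modulus $1$) with Theorem~\ref{main1}. Nothing further is needed.
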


The disjointness of the M\"{o}bius function from the flow in Corollary~\ref{nondiagcoro} was proved by Liu and Sarnak  in~\cite{LS}.

Let us make a remark about the number $t\in \mathbb{Z}$ in Proposition~\ref{nondiag}. It may not be unique but can not also arbitrary.
Suppose we have another $t'\in \mathbb{Z}$ such that Proposition~\ref{nondiag} holds. Then we have a modular matrix $P$ such that
$$
T_{t}P=PT_{t'}.
$$
Let $(x,y)$ be the first row of $P$ and $(u, v)$ be the second row of $P$. Then we have that
$$
tu=0, \;\; tv=t'x,\;\; t'u=0.
$$
This implies that $t=0$ if and only if $t'=0$. If $t\not=0$ and $t'\not=0$, then $v=(t'/t)x$ and $u=0$. Since $det P=1$, this implies that $x^{2} =t/t'$.
That is $t/t'$ must be the square of an integer. This implies that $T_{t}$ and $T_{t'}$ are (linearly) conjugate dynamical systems on $\mathbb{T}^{2}$ if and only if $t/t'$ is the square of an integer. For example,
$$
 \left(
             \begin{array}{cc}
               1 & 2 \\
               0 & 1 \\
             \end{array}
           \right)
      \quad      \hbox{and}\quad
           \left(
              \begin{array}{cc}
                1 & 1 \\
                0 & 1 \\
              \end{array}
            \right).
$$
are not conjugate flows on $\mathbb{T}^{2}$.

\medskip
Furthermore, let us give an example to which Proposition~\ref{nondiag} applies:
Consider the modular matrix
$$
   M= \left(
             \begin{array}{cc}
               -5 & 6 \\
               -6 & 7 \\
             \end{array}
           \right),
           \quad
           P= \left(
             \begin{array}{cc}
               1 & 0 \\
               1 & 1 \\
             \end{array}
           \right),
           \quad T_{6}= \left(
             \begin{array}{cc}
               1 & 6 \\
               0 & 1 \\
             \end{array}
           \right),
$$
Then we have
$$
P^{-1}MP =T_{6}.
$$

\medskip
When $A$ is not diagonalizable, we show in the following example that there is an oscillating sequence and an affine map 
${\bf x}:\to A{\bf x} +{\bf b}$ on $2$-tours such that this oscillating sequence is not linearly disjoint from the flow defined by this affine map.
 
\medskip
\begin{example}
 Let
$$
 A = \left(
             \begin{array}{cc}
               1 & 0 \\
               1 & 1 \\
             \end{array}
           \right)
, \qquad
{\bf b}=\left(
             \begin{array}{c}
               \alpha \\
               0 \\
             \end{array}
           \right)
$$
where $0<\alpha<1$ is an irrational number. 
Define 
$$
T_{A,b}({\bf x}) = A{\bf x} +{\bf b}: {\mathbb T}^{2}\to {\mathbb T}^{2}.
$$
The sequence ${\bf c}=(e^{- \pi i n^2 \alpha})$ is an oscillating sequence (refer to Example 3) satisfying the growth condition (\ref{growth}). 
But it is not linearly disjoint 
from the flow ${\mathcal X}=({\mathbb T}^{2}, T_{A,b})$ (whose topological entropy is zero).
\end{example}

\begin{proof}
Let 
$ {\bf x}= (x, y)^{t} \in  {\mathbb T}^{2}$.
Observe that
$$
T^n {\bf x} = \left(
             \begin{array}{c}
              x + n \alpha \\
              \frac{1}{2}n(n-1) \alpha + n x + y \\
             \end{array}
           \right).
$$
If we take the continuous function $f(x, y)=e^{2\pi i y}$, then 
we have 
$$
   \frac{1}{N}\sum_{n=1}^N e^{- \pi i n^2 \alpha} f(T^n(\alpha/2, 0))
   = \frac{1}{N}\sum_{n=1}^N e^{- \pi i n^2 \alpha} e^{ \pi i n^2 \alpha} =1
$$
which does not  tend to zero.
\end{proof}

\medskip
\begin{remark}
As a skew product of an irrational rotation, the  map $T_{A,b}$ is uniquely ergodic with the
Lebesque measure as the invariant measure having full support. This implies that $T_{A,b}$ is minimal, so {\bf MMA}.
But it is not {\bf MMLS} (equivalently {\bf MLS}). Otherwise, Theorem \ref{main1} would imply that the sequence ${\bf c}$ defined by $c_{n}=e^{- \pi i n^2 \alpha}$ is linearly disjoint from the flow
${\mathcal X} =({\mathbb T}^{2}, T_{A,b})$.
\end{remark}

\medskip
\begin{remark}~\label{higher}
In~\cite{LS}, Liu and Sarnak has proved that the M\"{o}bius function $\mu (n)$ is linearly disjoint from all affine flows 
$\mathcal{X} =( \mathbb{T}^{2}, T{\bf x} =A{\bf x} +{\bf b})$ whose topological entropy is zero, where 
$A\in SL(\mathbb{Z},2)$ and ${\bf b}\in \mathbb{T}^{2}$ (the result holds in high dimensions). This result and the above example indicate that in order for
an oscillating sequence ${\bf c}=(c_{n})$ to be linearly disjoint from all zero entropy flows, it may not be oscillating only in the first order but also oscillating in any higher order, that is, 
$$
    \lim_{N \to \infty} \frac{1}{N} \sum_{n=1}^N c_n e^{2\pi i P(n)}=0
$$
for all real polynomials $P$. The M\"obius function and the von Mangoldt function, which is defined as $\Lambda (n)= \log p$ if $n=p^{k}$ for some prime number and integer $k\geq 1$ and $0$ otherwise, share the above oscillating property for any high order as shown in~\cite{Hua}. 
\end{remark}

\section{\bf Feigenbaum Zero Topological Entropy Flows}

A famous zero topological entropy flow has been studied extensively for last fourty years is the so-called Feigenbaum quadratic-like map.
It is a non-linear dynamical system and does not have the mean-equicontinuous property on the whole space.
In this section, we will prove that a Feigenbaum
zero topological entropy flow is minimally mean attractable ({\bf MMA}) and minimally mean-L-stable ({\bf MMLS}).  
Thus  Sarnak's conjecture is true for such a dynamical system.
We use $ent (T)$ to denote the topological entropy of a dynamical system $T: X\to X$. For the definition of the topological entropy, the reader can refer to~\cite{Bo,Si,Pa}.

Let $I=[-1,1]$.
Suppose
$$
T_{t} (x) =t-(1+t) x^{2}: I\to I.
$$
It is a family of quadratic polynomials with parameter $-1/2 \leq t \leq 1$.
Then $-1$ is a fixed point of $T_{t}$ and $T_{t}$ maps $1$ to $-1$. The map $T_{t}$ has a unique critical point $0$
and it is non-degenerate, that is, $T_{t}'(0)=0$ and $T_{t}''(0)\not=0$.

When $t_{0}=-1/2$, $T_{t_{0}}$ has $-1$ as a parabolic fixed point, that is, $T_{t_{0}}'(-1)=1$ (see Figure 1). And $T_{t_{0}}$
has no other fixed point. One can see easily that
$$
T_{t_{0}}^{n}  (x) \to -1 \quad \hbox{ as $n\to \infty$}, \quad \forall \; x\in I .
$$
So $ent (T_{t_{0}}) =0$ and in this case,
$K_{0}=\{-1\}$ is the only minimal set and $\hbox{\rm Basin}(K_{0})=[-1,1]$. Thus the flow $\mathcal{X}=([-1,1], T_{t_{0}})$ is {\bf MMA} and  {\bf MMLS}.

\begin{figure}[htp]~\label{fig1}
\centering{
\includegraphics[scale=0.6]{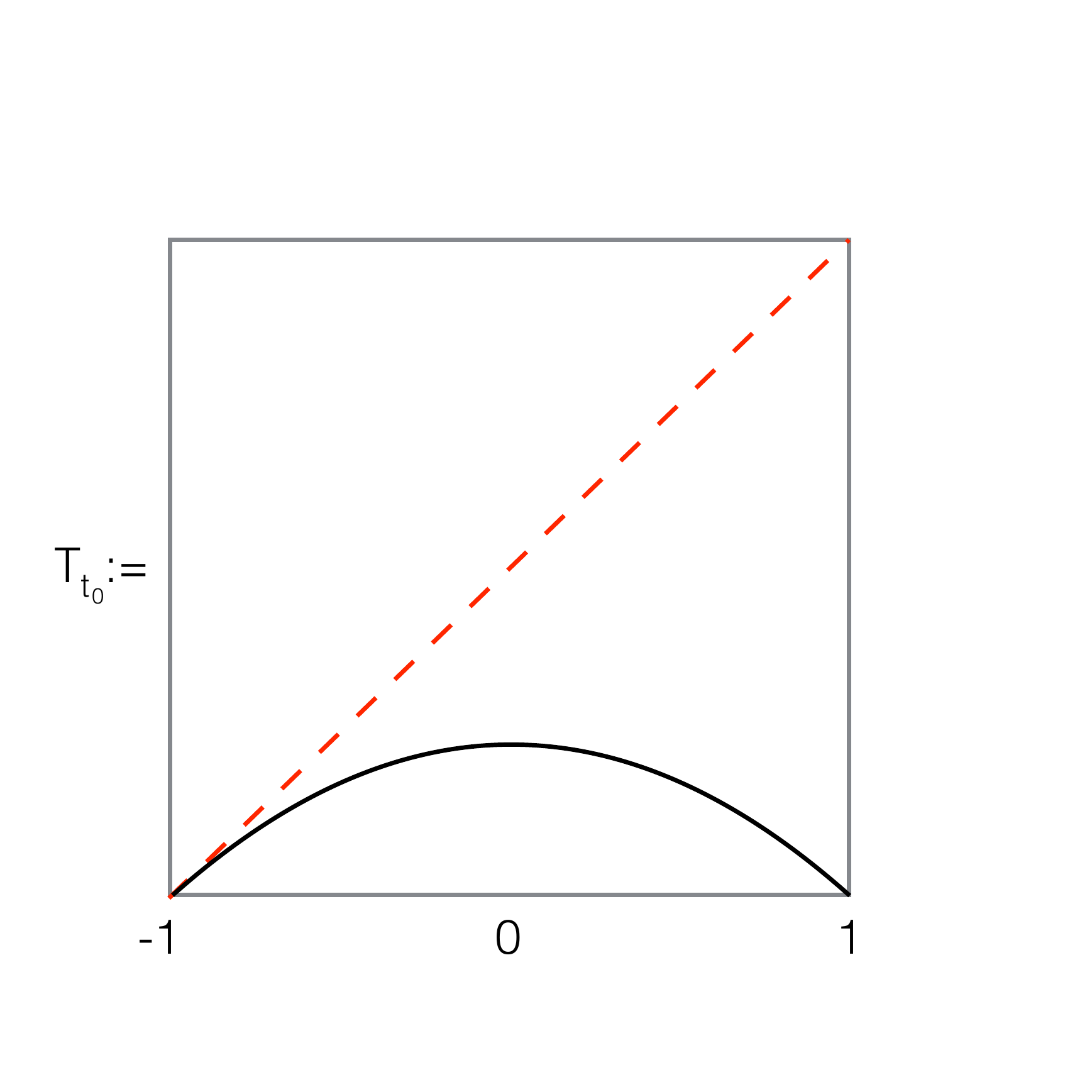}}
\vspace*{-40pt}
\caption{}
\end{figure}

When  $t>t_{0}$ but close to $t_{0}$, $-1$ becomes a repelling fixed point of $T_{t}$, that is, $T_{t}' (-1)>1$,
but $T_{t}$ has another fixed point $p_{t}$ which is an attracting fixed point, that is, $T_{t} (p_{t}) =p_{t}$ and $|T_{t}' (p_{t})| <1$ (see Figure 2).
Furthermore, we have that
$$
T_{t}^{n}  (x) \to p_{t} \quad \hbox{as $n\to \infty$}, \;\;\forall \; x\in (-1,1) .
$$
\begin{figure}[htp]~\label{fig2}
\centering{
\includegraphics[scale=0.6]{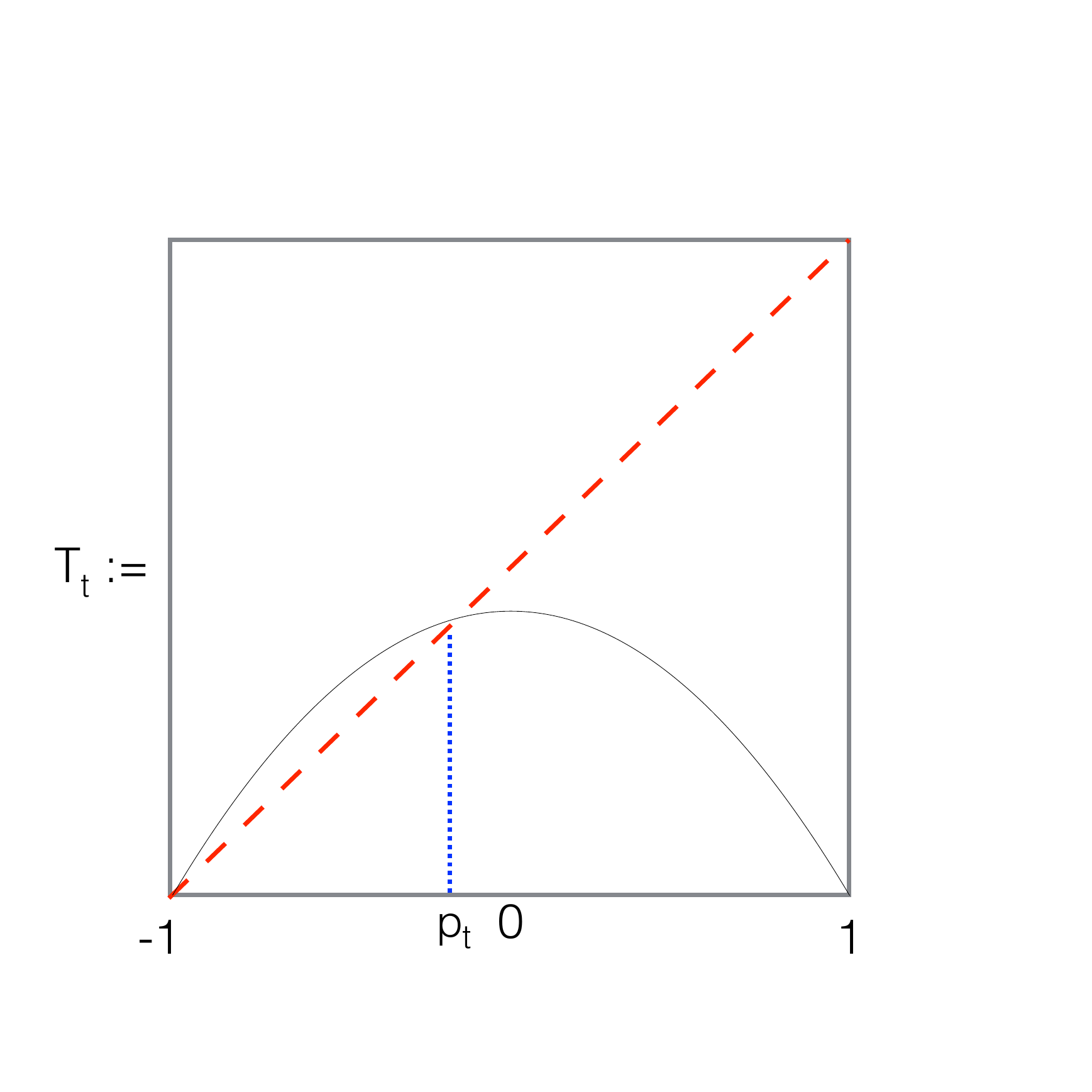}}
\vspace*{-10pt}
\caption{}
\end{figure}

This phenomenon keeps until $p_{t}$ becomes a parabolic fixed point at $t_{1}$, that is, $T_{t_{1}} '(p_{t_{1}}) =-1$. So, for $t_{0}< t\leq t_{1}$,  we have that
$ent (T_{t}) =0$ and
$T_{t}$ has two minimal subsets $K_{0}=\{-1\}$ and $K_{1} =\{ p_{t}\}$ and
$\hbox{\rm Basin}(K_{0})=\{-1,1\}$ and $\hbox{\rm Basin}(K_{1})=(-1,1)$.
This proves that the flow $\mathcal{X}=([-1,1], T_{t})$ is {\bf MMA} and {\bf MMLS}.

When  $t>t_{1}$ but close to $t_{1}$, $p_{t}$ becomes a repelling fixed point of $T_{t}$, that is, $T_{t} (p_{t}) =p_{t}$ and $|T_{t} '(p_{t})|>1$,
but there is a periodic cycle of period $2$ appearing near $p_{t}$, which we denote as $\{ p_{t,2,0}, p_{t,2,1}\}$.
This cycle is attractive, that is,
$$
T_{t} (p_{t,2.0})= p_{t, 2,1}, \quad T_{t} (p_{t,2.1})= p_{t, 2,0}
$$
and
$$
|(T_{t}^{2})' (p_{t,2.0})| = |(T_{t}^{2})' (p_{t,2.1}) |<1
$$
(see Figure 2).
Moreover,
$$
T_{t}^{n}  (x) \to \{ p_{t,2,0}, p_{t,2,1}\}\;\; \hbox{ as $n\to \infty$}, \;\;\; \forall x\in I\setminus \big( \cup_{m=1}^{\infty} T_{t}^{-m} (p_{t}) \cup \{-1,1\}\big).
$$
This phenomenon keeps until the cycle $\{ p_{t,2,0}, p_{t,2,1}\}$ becomes a parabolic periodic cycle of period $2$ at $t_{2}$, that is,
$$
|(T_{t_{1}}^{2})' (p_{t_{2},2,0})| =|(T_{t_{1}}^{2})' (p_{t_{2},2,1})|=1.
$$
So, for $t_{1}< t\leq t_{2}$,  we have that
$ent (T_{t}) =0$ 
and $T_{t}$ has three minimal subsets
$K_{0}=\{-1\}$ and $K_{1}=\{p_{t}\}$ and $K_{2} =\{ p_{t,2,0}, p_{t,2,1}\}$ with
$$
\hbox{\rm Basin}(K_{0})=\{-1,1\},  \;\;\; \hbox{\rm Basin}(K_{1})=\cup_{n=0}^{\infty} T^{-n} (p_{t}),
$$
and
$$
\hbox{\rm Basin}(K_{2})=(-1,1)\setminus \hbox{\rm Basin}(K_1).
$$
We have proved that the flow $\mathcal{X}=([-1,1], T_{t})$ is {\bf MMA} and {\bf MMLS} for $t_{1}<t\leq t_{2}$.

%\vspace*{-100pt}

\begin{figure}[htp]~\label{fig3}
\centering{
\includegraphics[scale=0.8]{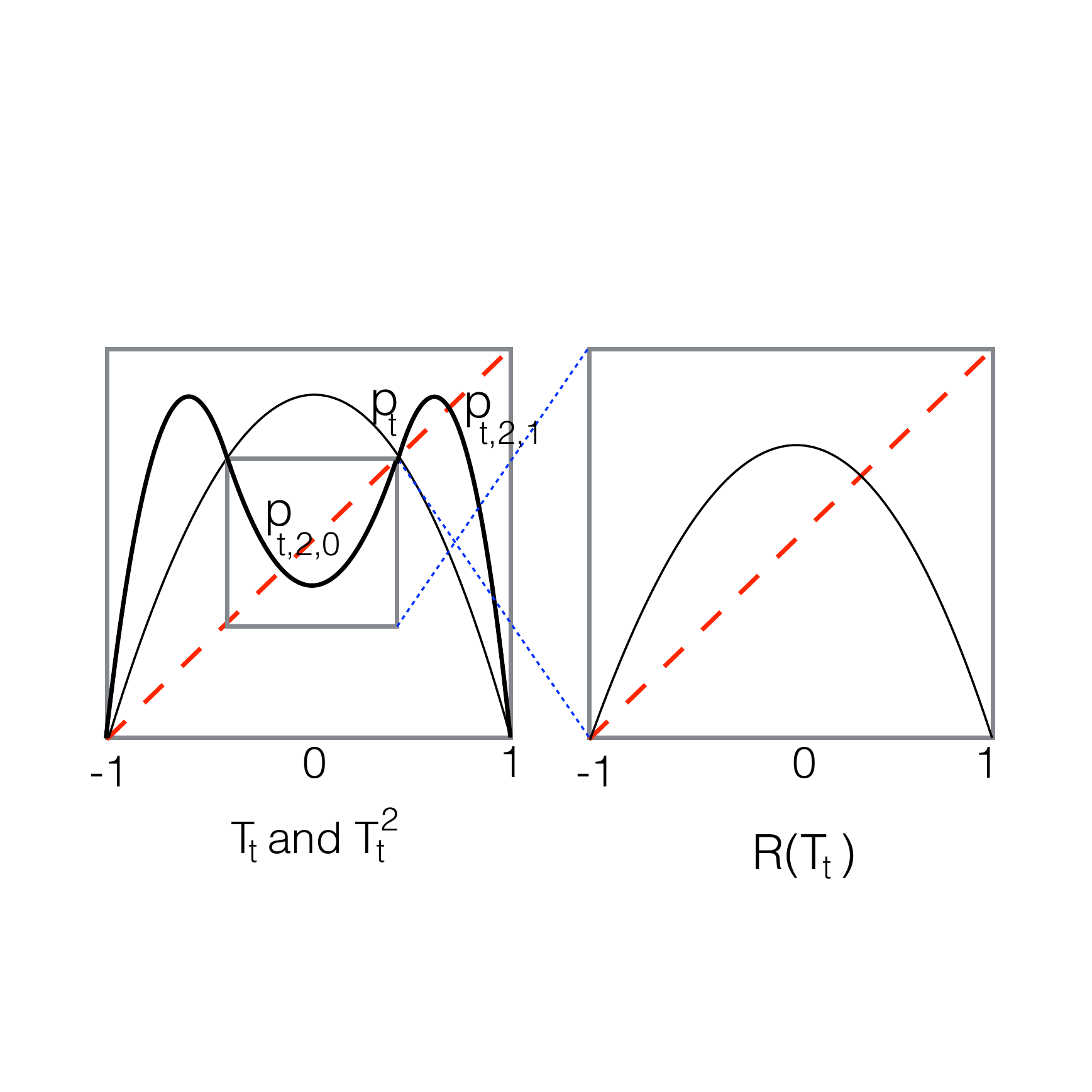}}
\vspace*{-100pt}
\caption{}
\end{figure}

To see how the periodic cycle of period $2$ was born, we can use the period doubling operator ${\mathcal R}$ as follows. When $t_{1}\leq t\leq t_{2}$,
let $\beta = p_{t}>0$. Consider the map
$$
{\mathcal R} (T_{t}) (x) = -\beta^{-1} T_{t}^{2} (-\beta x): I\to I.
$$
It is not a quadratic polynomial but has a graph similar to $T_{t}$ for $t_{0} \leq t\leq t_{1}$.
So when $t=t_{1}$, ${\mathcal R} (T_{t})$ has a parabolic fixed point $-1$ and
when $t_{1}<t<t_{2}$, ${\mathcal R} (T_{t})$ has $-1$ as a repelling fixed point and there is an attractive fixed point $q_{t}$.
When $t=t_{2}$,  ${\mathcal R} (T_{t})$ has $-1$ as a repelling fixed point and $q_{t}$ becomes a parabolic fixed point.
Note that
$$
p_{t,2,0} =-\beta q_{t} \quad \hbox{and}\quad p_{t,2,1}=T_{t} (p_{t,2,0})
$$
become a cycle of periodic points of period $2$ for $T_{t}$.

In general, repeat the above process, we have the following dichotomy.
There is an increasing sequence
$$
t_{0} < t_{1}< \cdots < t_{n} < t_{n+1} <\cdots <1,
$$
for any $t_{n-1}<t \leq t_{n}$, $T_{t}$ has a repelling periodic cycle
$$
Orb (i) =\{ p_{t, 2^{i}, k}\}_{k=0}^{2^{i-1}}
$$
of period $2^{i}$ for any $0\leq i<n$ and an attractive (or parabolic but semi-attractive) periodic cycle
$$
Orb (n) =\{ p_{t, 2^{n}, k}\}_{k=0}^{2^{n-1}}
$$
such that
$$
T_{t}^{n}  (x) \to Orb (n)\;\; \hbox{ as $n\to \infty$}, \;\;\; \forall x\in I\setminus P_{t}
$$
where $P_{t}= \cup_{m=0}^{\infty} T^{-m}_{t} (\cup_{i=0}^{n-1} Orb (i))$ is a countable subset of $I$.
So we have that $ent (T_{t}) =0$ for $t_{0}\leq t<t_{\infty}$,
where
$$
t_{\infty}=\lim_{n\to \infty} t_{n}.
$$
In the case $t_{n-1}<t\leq t_{n}$, $T_{t}$ has $n+1$ minimal subset sets, $K_{0} =\{-1\}$, $K_{1} =\{p_{t}\}$, and
$$
K_{i+1} =\{ p_{t,2^i, k}\}_{k=0}^{2^{i}-1}, \quad i=1, 2, \cdots, n-1.
$$
with $\hbox{\rm Basin}(K_{0})=\{-1,1\}$ and
$$
\hbox{\rm Basin}(K_{i})=\cup_{n=0}^{\infty} T^{-n} (K_{i}), \quad i=1, 2, \cdots, n-1
$$
and
$$
\hbox{\rm Basin}(K_{n})=[-1,1]\setminus \cup_{i=0}^{\infty} \hbox{\rm Basin}(K_{i}).
$$
Thus we have proved that the flow $\mathcal{X}=([-1,1], T_{t})$ is {\bf MMA} and {\bf MMLS}.

Concluding from the above, we have that

\medskip
\begin{theorem}~\label{easyone}
For any $-1/2\leq t<t_{\infty}$, the flow 
$$
\mathcal{X} =(I, T_{t} (x) =t-(1+t)x^{2})
$$ 
is {\bf MMA} and {\bf MMLS}.
\end{theorem}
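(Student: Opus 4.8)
The plan is to read both properties off the bifurcation picture assembled in the discussion preceding the statement, using the earlier proposition on periodic $\omega$-limit sets to upgrade topological attraction to mean attraction. Fix $t$ with $-1/2 \le t < t_\infty$ and let $n$ be the index with $t_{n-1} < t \le t_n$. The preceding analysis identifies the minimal subsets of $(I,T_t)$ as exactly the finitely many periodic cycles $K_0 = \{-1\}$, $K_1 = \{p_t\}$, $K_2, \dots, K_n$, where $K_n$ is the unique attracting (or, at $t=t_n$, parabolic semi-attracting) cycle of period $2^{n-1}$ and the others are repelling; it also records the dichotomy that $T_t^m(x) \to K_n$ for every $x \notin P_t$ while $T_t^m(x)$ reaches one of the repelling cycles in finite time for the countably many $x \in P_t$.

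Granting this structure, MMLS is immediate: each minimal set $K_i$ is a finite set on which $T_t$ acts as a cyclic permutation, so the subflow $(K_i,T_t)$ is trivially equicontinuous — take $\delta$ below the least distance between distinct points of $K_i$ — hence mean-equicontinuous and therefore MLS by Proposition~\ref{eq}. Since this holds for every minimal subset, $(I,T_t)$ is MMLS.

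For MMA I would show each $x \in I$ is mean attracted to one of the $K_i$, so that $I = \bigcup_i \mathrm{Basin}(K_i)$. By the earlier proposition on periodic $\omega$-limit sets, whenever $\omega(x)$ is a periodic cycle $K$ there is a time $\tau$ and a point $z \in K$ with $d(T_t^{\tau+m}x, T_t^{\tau+m}z) < \epsilon$ for all $m \ge 0$; averaging gives $\limsup_N \frac1N\sum_{n=1}^N d(T_t^n x, T_t^n z) \le \epsilon$, so $x \in \mathrm{Basin}(K)$. Thus it suffices that $\omega(x)$ be one of the cycles $K_0,\dots,K_n$ for every $x$, which is exactly the content of the convergence dichotomy above: points of $P_t$ land on a repelling cycle $K_i$ (so $\omega(x)=K_i$), and all other points satisfy $\omega(x)=K_n$. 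In effect this is Example~\ref{1ex} repeated at every level of the cascade.

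The genuine work — and the step I expect to be the main obstacle in a fully self-contained proof — is justifying the convergence dichotomy itself, namely that below $t_\infty$ every orbit converges to a periodic cycle and no further minimal sets exist. I would argue by induction on the renormalization level $n$. The base case $-1/2 \le t \le t_1$ is a direct reading of the graph of $T_t$ (at $t_0=-1/2$ the parabolic fixed point $-1$ attracts all of $I$; for $t_0 < t \le t_1$ there is in addition the attracting fixed point $p_t$, drawing every $x \in (-1,1)$). The inductive step uses the period-doubling operator $\mathcal{R}$: on the invariant interval about the critical point the second iterate $T_t^2$ is affinely conjugate to $\mathcal{R}(T_t)$, a unimodal map of the same type sitting at a base-case parameter, so the inductive hypothesis forces every orbit there to converge under $T_t^2$ to the top cycle $K_n$. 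Controlling the orbits that linger away from this renormalization interval, and ruling out wandering intervals, is where the negative Schwarzian derivative of $T_t$ (and Singer's theorem, giving a unique attracting cycle that absorbs the critical orbit) carries the load; once this is in hand, the passage to mean attraction and the verification of MMLS are routine.
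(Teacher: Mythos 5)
Your proposal is correct and follows essentially the same route as the paper: identify the minimal sets as the finitely many periodic cycles of the period-doubling cascade for $t_{n-1}<t\le t_n$ (so {\bf MMLS} is immediate, each cycle being a finite equicontinuous subsystem), and upgrade the topological convergence of every orbit to one of these cycles into mean attraction via the Section~3 proposition on periodic $\omega$-limit sets. You are in fact slightly more explicit than the paper on two points it leaves implicit — the invocation of that proposition to pass from attraction to mean attraction, and the fact that the convergence dichotomy itself rests on the standard negative-Schwarzian/renormalization theory rather than being proved from scratch — but the underlying argument is the same.
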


\medskip
\begin{corollary}~\label{prefeig}
Any oscillating sequence ${\bf c}=(c_{n})$ satisfying the growth condition (\ref{growth})
 is linearly disjoint from 
 $$
 \mathcal{X}= (I, T_{t} (x) =t-(1+t)x^{2})
 $$
 for any $-1/2\leq t<t_{\infty}$
 \end{corollary}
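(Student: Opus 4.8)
The plan is to obtain Corollary~\ref{prefeig} as an immediate application of Theorem~\ref{main1}, which already asserts that every oscillating sequence satisfying the growth condition (\ref{growth}) is linearly disjoint from every flow that is simultaneously {\bf MMA} and {\bf MMLS}. Thus the entire task reduces to verifying that the flow $\mathcal{X} = (I, T_t(x) = t-(1+t)x^2)$ meets these two hypotheses for each parameter $t$ in the range $-1/2 \le t < t_\infty$. First I would fix such a $t$ and invoke Theorem~\ref{easyone}, which states precisely that $\mathcal{X}$ is {\bf MMA} and {\bf MMLS} on this whole parameter interval; with that in hand, the conclusion of Theorem~\ref{main1} applies verbatim to give
$$
\lim_{N\to\infty} \frac{1}{N}\sum_{n=1}^{N} c_n f(T_t^n x) = 0
$$
for every $f \in C(I)$ and every $x \in I$, which is exactly the linear disjointness asserted.

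Since every observable in $\mathcal{X}$ has the form $\xi(n)=f(T_t^n x)$ for some continuous $f$ and some $x\in I$, no further reduction is needed: the disjointness identity (\ref{ld}) coincides with the displayed limit above. I would also remark that Theorem~\ref{main1} delivers more than bare disjointness, namely uniformity of the convergence on each minimal subset, which is automatically inherited here.

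The genuine content of the statement is therefore carried entirely by Theorem~\ref{easyone}, and the main obstacle lies there rather than in the corollary itself: one must run through the period-doubling cascade and show, at each stage $t_{n-1}<t\le t_n$, that $I$ decomposes as the union of the basins of the finitely many periodic cycles $K_0,\dots,K_n$ together with the irrational-rotation pieces, establishing the minimal-mean-attractability decomposition (\ref{Decomp}), and that the restriction to each minimal set (a finite periodic cycle or an isometric rotation) is {\bf MLS}. Once that bifurcation analysis is granted, as it is in Theorem~\ref{easyone}, the proof of Corollary~\ref{prefeig} is a one-line citation of Theorem~\ref{main1} and requires no additional estimates.
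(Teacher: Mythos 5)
Your proposal is correct and is exactly the paper's route: Corollary~\ref{prefeig} is obtained by citing Theorem~\ref{easyone} (the flow $(I,T_t)$ is {\bf MMA} and {\bf MMLS} for $-1/2\le t<t_\infty$) and then applying Theorem~\ref{main1} verbatim. One small inaccuracy in your closing remark: for $t<t_\infty$ all minimal sets in the cascade are finite periodic cycles, so there are no ``irrational-rotation pieces'' in the decomposition (the adding-machine/Cantor attractor only appears at $t=t_\infty$, which is excluded here); this does not affect the validity of your argument, since you only cite Theorem~\ref{easyone}.
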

 
 A special case in Corollary~\ref{prefeig} is the M\"obius function. 

The quadratic polynomial $T_{t_{\infty}}$ is called the {\em Feigenbaum quadratic polynomial}.
This polynomial has been studied for the past 40 years since it was discovered by Feigenbaum~\cite{Fe1,Fe2} and, independently, by Coullet and Tresser~\cite{CT}, in 1970s. 
The following theorem can be found in the literature~\cite{CEBook,JBook,MBook,MVBook}.

\medskip
\begin{theorem}~\label{fei}
The quadratic polynomial $T_{t_{\infty}}$ on $I$ as a dynamical system has two repelling fixed points $-1$ and $p_{t_{\infty}}$ and a repelling periodic cycle
$$
Orb (n) =\{ p_{t, n, k}\}_{k=0}^{2^{n-1}}
$$
of period $2^{n}$ for every $n\geq 1$.
There is a Cantor set $\Lambda$ such that
$$
T_{t_{\infty}}^{n} (x) \to \Lambda \;\; \hbox{ as $n\to \infty$},\;\;\; \forall \; x\in I\setminus P_{t_{\infty}},
$$
where
$$
P_{t_{\infty}}=\cup_{m=0}^{\infty} T^{-m} \Big( \{ -1,  p_{t_{\infty}}\} \cup \big(\cup_{n=0}^{\infty} Orb (n)\big) \Big)
$$
is a countable subset of $I$.
Moreover, the topological entropy
$$
ent (T_{t_{\infty}})=0.
$$
\end{theorem}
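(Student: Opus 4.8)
The plan is to recognize this statement as the collection of classical facts about the infinitely renormalizable map sitting at the accumulation of the period-doubling cascade, and to establish them through the renormalization structure already signaled by the period-doubling operator $\mathcal{R}$ introduced above. First I would note that the bifurcation values $t_0 < t_1 < \cdots$ form an increasing sequence bounded above by $1$, so the limit $t_\infty = \lim_n t_n$ exists; this is the parameter to analyze. The map $T_{t_\infty}$ is then \emph{infinitely renormalizable} in the period-doubling sense, and this single structural property is what drives every assertion of the theorem.

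For the periodic data I would argue by persistence of cycles. For each $n\ge 1$ the period-$2^n$ cycle $Orb(n)$ is born before $t_n$, is parabolic (multiplier $-1$) exactly at $t_n$, and becomes repelling for $t>t_n$. Since $t_\infty > t_n$ for every $n$, the implicit function theorem continues each cycle and its multiplier continuously in $t$ up to $t=t_\infty$, where the multiplier has modulus strictly greater than $1$; hence every $Orb(n)$, together with the fixed points $-1$ and $p_{t_\infty}$, is repelling at $t_\infty$.

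The heart of the proof is the Cantor attractor $\Lambda$. Here I would use infinite renormalizability directly: at $t_\infty$ the map admits, for every level $n$, a cycle of $2^n$ disjoint closed intervals $J_n^0,\dots,J_n^{2^n-1}$ that are cyclically permuted by $T_{t_\infty}$, satisfy $T_{t_\infty}^{2^n}(J_n^k)\subseteq J_n^k$, and refine from one level to the next, the two level-$(n+1)$ intervals sitting inside each $J_n^k$. Setting $\Lambda = \bigcap_n \bigcup_k J_n^k$ produces a nonempty, compact, $T_{t_\infty}$-invariant set. To see that $\Lambda$ is actually a Cantor set I would invoke the a priori real bounds for the Feigenbaum combinatorics, which force the interval lengths to shrink to $0$ while keeping the gaps open. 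This is precisely the deep analytic input supplied by renormalization theory (Lanford, Sullivan, McMullen, Lyubich), and it is the step I expect to be the main obstacle: the combinatorics is elementary, but the geometric contraction that makes $\Lambda$ a genuine Cantor set is not.

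With $\Lambda$ in hand, attraction and the exceptional set follow. For countability of $P_{t_\infty}$, each of the countably many points of $\{-1,p_{t_\infty}\}\cup\bigcup_n Orb(n)$ has at most $2^m$ preimages under $T_{t_\infty}^m$, so its full backward orbit is a countable union of finite sets; taking the union over all base points keeps $P_{t_\infty}$ countable. For attraction, I would show that any $x\in I\setminus P_{t_\infty}$ eventually enters and then remains in $\bigcup_k J_n^k$ for each $n$: at $t_\infty$ there is no attracting cycle to trap the orbit, and the repelling cycles are avoided exactly by excluding $P_{t_\infty}$, so the orbit is squeezed into deeper and deeper levels and $d(T_{t_\infty}^m x,\Lambda)\to 0$. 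Finally, zero entropy follows from Milnor--Thurston kneading theory: the map $t\mapsto ent(T_t)$ is monotone and continuous, and it vanishes for every $t<t_\infty$ by the explicit basin decomposition established above, whence $ent(T_{t_\infty})=0$ by continuity; equivalently, the Feigenbaum kneading sequence has subexponential lap-number growth.
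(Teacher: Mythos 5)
The paper does not prove this theorem at all: it is stated as a quoted classical result, with the sentence ``The following theorem can be found in the literature'' and citations to Collet--Eckmann, Jiang, McMullen and de Melo--van Strien. So there is no in-paper argument to compare against; your proposal is a reconstruction of the standard proof that lives in those references, and as an outline it is the right one. You correctly isolate the genuinely deep input (the real a priori bounds that make the nested cycles of intervals shrink, so that $\Lambda=\bigcap_n\bigcup_k J_n^k$ is a Cantor set rather than a union of intervals), and the countability of $P_{t_\infty}$ and the continuity-of-entropy argument for $ent(T_{t_\infty})=0$ are fine (continuity and monotonicity of $t\mapsto ent(T_t)$ for the quadratic family is itself a Milnor--Thurston/Misiurewicz-level theorem, but it is in the cited literature).

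Two steps in your sketch are under-justified as written. First, the implicit function theorem only continues each cycle $Orb(n)$ locally past $t_n$; to conclude that its multiplier still has modulus strictly greater than $1$ at $t=t_\infty$ you need to rule out the multiplier drifting back into $[-1,1]$ on $(t_n,t_\infty]$. The standard tool is Singer's theorem: since $S(T_t)<0$, the map has at most one non-repelling cycle and its immediate basin must contain the critical point, which at $t_\infty$ is already committed to the level-$(n+1)$ renormalization intervals, so $Orb(n)$ cannot become non-repelling again. Second, ``no attracting cycle to trap the orbit'' does not by itself force every $x\notin P_{t_\infty}$ into $\bigcup_k J_n^k$: you must also exclude orbits that wander forever in the gaps between the $J_n^k$ without ever hitting a preimage of a periodic point. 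This is the non-existence of homtervals/wandering intervals, which again comes from the negative Schwarzian derivative (or from Guckenheimer/de Melo--van Strien). With those two ingredients named, the proposal matches the proof the cited references actually give.
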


In the rest of this section, we will give a rigorous proof of that $T_{t_{\infty}}$ on $I\setminus P_{t_{\infty}}$ is {\bf MLS} (that is, mean-equicontinuous).
Actually, we will prove all Feigenbaum zero topological entropy dynamical systems are {\bf MLS} (that is, mean-equicontinuous) on their corresponding set.

Our strategy in the proof is as follows. First we will give a rigorous proof of that the restriction of
a Feigenbaum zero topological entropy flow to its Cantor set attractor is topologically conjugate
to the adding machine on the $2$-adic symbolic flow (refer to Section 5).
This fact was first observed by Feigenbaum in~\cite{Fe1,Fe2} when he proposed his famous universality theory
in chaos (and, independently, by Coullet and Tresser~\cite{CT}).  Then it was emphasized by Sullivan in his many 
lectures attended by the second author in 1986-1989 and in our paper and book~\cite{JMS,JBook}.  
Our rigorous proof is based on two facts. The first one is the existence of the fixed point of the periodic doubling operator 
which was hot topic during the past 40 years in the research of
complex dynamics and chaos. The existence of the fixed point was first observed by Feigenbaum in~\cite{Fe1,Fe2}
and used to explain his university theory in chaos. Then was proved by Lanford in~\cite{La} with computer-assistant
(but still a legitimate mathematical proof). Furthermore, a conceptual understanding and more rigorous mathematical
proof by using Teichm\"uller theory and following idea of Moscow rigidity theorem was given by very recent work 
of many mathematicians (refer to~\cite{Su,MBook2,JBook,Ly}. The other is the connection between Cantor set attractor of the Feigenbaum fixed point  
and the non-escaping set of an induced expanding flow as studied in our previous paper~\cite{JMS}.

To state our main result, we first define the space of all Feigenbaum quadratic-like maps.
Suppose $T: I\to I$ is a $C^{3}$-map. We call it a {\em quadratic-like map} if
\begin{enumerate}
\item $T(-1) =T(1)=-1$;
\item $T'(0)=0$ and $T''(0)\not=0$;
\item $T'(x) >0$ for $x\in [-1, 0)$ and $T' (x) <0$ for $x\in (0, 1]$;
\item the Schwarzian derivative
$$
S(T) (x):=\frac{T'''(x)}{T'(x)} -\frac{3}{2} \Big(\frac{T''(x)}{T'(x)}\Big)^{2}<0, \quad \forall x\in I.
$$
\end{enumerate}
Let ${\mathcal Q}$ be the space of all quadratic-like maps. Then the topological entropy
$$
ent (T): {\mathcal Q}\to {\mathbb R}^{+}=\{ x\geq 0\}
$$
defines a functional.
Using the theory of kneading sequences of Milnor-Thurston~\cite{MT,CEBook}, the space ${\mathcal Q}$ is divided into two parts,
$$
{\mathcal Q}_{0} =\{T\in {\mathcal Q} \;|\; ent (T) =0\}
$$
and
$$
{\mathcal Q}_{+} =\{ T\in {\mathcal Q} \;|\; ent (T) >0\}
$$
The boundary of both spaces
$$
{\mathcal F}_{\infty} =\partial {\mathcal Q}_{0}=\partial {\mathcal Q}_{+}
$$
is called the {\em space of Feigenbaum quadratic-like maps}. 

\medskip
\begin{remark}
The space of Feigenbaum quadratic-like maps can be defined much more general. For examples, we can assume that the type of critical point is $|x|^{\alpha}$ for some $\alpha >1$ and that the smoothness is $C^{1+Z}$ where $Z$ means the Zygmund modulus of continuity.  See~\cite{JBook} for a more general definition of Feigenbaum folding maps.
To avoid to lost the reader from our main purpose, we use an easy definition of a Feigenbaum map.  What we will prove in Theorem~\ref{main2} works for any Feigenbaum folding map but the argument will be more complicate. 
\end{remark}

We would like to note that the space
${\mathcal F}_{\infty}$ is all quadratic-like maps with the same Feigenbaum kneading sequence which is the kneading sequence of $T_{t_{\infty}}$.
Moreover, we have the following result from the literature~\cite{JBook,MBook,MVBook}.

\medskip
\begin{theorem}~\label{fei1}
Any $T\in {\mathcal F}_{\infty}$ has two fixed points $-1$ and $\beta$ and a repelling periodic cycle
$$
Orb (n) =\{ p_{T, 2^n, k}\}_{k=0}^{2^{n-1}}
$$
of period $2^{n}$ for every $n\geq 1$.
There is a Cantor set $\Lambda_{T}$ such that
$$
T^{n} (x) \to \Lambda_{T}\;\;  \hbox{ as $n\to \infty$}, \;\;\; \forall x\in I\setminus P_{T}
$$
where
$$
P_{T}=\cup_{m=0}^{\infty} T^{-m} \Big(\{-1,\beta\}\cup \big(\cup_{n=1}^{\infty} Orb (n)\big)\Big)
$$
is a countable subset of $I$.
Furthermore, there is a homeomorphism $h$ of $I$ such that
$$
h\circ T=T_{t_{\infty}} \circ h \quad \hbox{on $I$}.
$$
Thus the topological entropy
$$
ent(T)=0.
$$
\end{theorem}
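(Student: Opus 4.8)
The plan is to treat $\mathcal{F}_\infty$ exactly as it was defined — the set of all quadratic-like maps carrying the Feigenbaum kneading sequence, i.e.\ the common kneading sequence of $T_{t_\infty}$ — and to deduce every assertion by transporting the already-established structure of the model map $T_{t_\infty}$ (Theorem~\ref{fei}) through a topological conjugacy. Thus the real content is the construction of a homeomorphism $h$ of $I$ with $h\circ T = T_{t_\infty}\circ h$; once $h$ is in hand, the fixed points, the periodic cycles of period $2^n$, the Cantor attractor, the countable exceptional set, and the vanishing of the entropy all follow by pushing forward the corresponding objects for $T_{t_\infty}$, since topological entropy and the attracting/repelling nature of orbits are conjugacy invariants.

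First I would record the purely combinatorial consequences of the defining conditions (1)--(4). Condition (1) gives $T(-1)=-1$, so $-1$ is automatically a fixed point; the unimodal shape in (2)--(3) together with the fact that $T$ carries the Feigenbaum kneading sequence forces a second, orientation-reversing fixed point $\beta$ and, because that kneading sequence encodes infinitely many period-doublings, a periodic orbit $Orb(n)$ of period $2^n$ for each $n\ge 1$. To see that these cycles are repelling I would use the negative Schwarzian derivative~(4): a classical consequence of Singer's theorem (see~\cite{MVBook}) is that the immediate basin of any attracting or neutral periodic orbit must contain the critical point $0$, so there is at most one non-repelling cycle and its basin would swallow the critical orbit. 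But for a map in $\mathcal{F}_\infty$ the critical orbit accumulates on the Cantor set rather than on a periodic orbit, so no $Orb(n)$ can be attracting or parabolic, and each is repelling.

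For the conjugacy I would invoke the rigidity package available in the cited literature. The key ingredient is the no-wandering-interval theorem for $C^3$ (indeed $C^2$ with negative Schwarzian) unimodal maps~\cite{MVBook}: $T$ has no wandering intervals. Combined with the Milnor--Thurston theorem~\cite{MT} that, in the absence of wandering intervals and of non-repelling cycles, the kneading sequence is a complete topological invariant, the equality of the kneading sequences of $T$ and $T_{t_\infty}$ yields a topological conjugacy $h\colon I\to I$. I would then set $\Lambda_T=h^{-1}(\Lambda)$ and $P_T=h^{-1}(P_{t_\infty})$; the first is a Cantor set and the second is countable because $h$ is a homeomorphism, and the attraction $T^n(x)\to\Lambda_T$ for $x\notin P_T$ is the image under $h^{-1}$ of the corresponding statement in Theorem~\ref{fei}. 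Finally $ent(T)=ent(T_{t_\infty})=0$ by conjugacy invariance. An equivalent and more self-contained route, matching the renormalization discussion of this section, is to use that $T$ is infinitely renormalizable of period-doubling type, so its renormalizations $\mathcal{R}^n(T)$ converge to the unique hyperbolic fixed point of $\mathcal{R}$ (\cite{La}; and via Teichm\"uller theory in~\cite{Su,Ly}), and to build $h$ on the nested renormalization intervals, obtaining $\Lambda_T$ as the intersection $\bigcap_n \bigcup_k I_{n,k}$ of these intervals.

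The main obstacle is the construction of $h$ on the \emph{whole} interval, not merely on the attractor. On the Cantor set $\Lambda_T$ the conjugacy is soft: both $T|_{\Lambda_T}$ and $T_{t_\infty}|_{\Lambda}$ are conjugate to the dyadic adding machine through the period-doubling combinatorics, hence to each other. Extending the conjugacy across the complementary gaps and down through all the preimages in $P_T$ is what genuinely requires the no-wandering-interval theorem and the absence of attracting or neutral cycles established above; these are the deep analytic inputs. Since the statement is explicitly quoted from~\cite{JBook,MBook,MVBook}, I would cite these results rather than reprove them, and spend the written argument assembling them and verifying — routinely — that the pushed-forward objects have precisely the form asserted.
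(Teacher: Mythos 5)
The paper offers no proof of Theorem~\ref{fei1} at all --- it is quoted from the literature (\cite{JBook,MBook,MVBook}) --- and your outline is precisely the standard argument those references contain (Singer's theorem from the negative Schwarzian to rule out non-repelling cycles, the no-wandering-interval theorem plus Milnor--Thurston kneading rigidity to build the conjugacy $h$ with $T_{t_\infty}$, and then transport of the fixed points, cycles, Cantor attractor and entropy), so it is fully consistent with the paper's treatment. The one phrase worth tightening is your justification that no cycle $Orb(n)$ is attracting or neutral: saying the critical orbit ``accumulates on the Cantor set'' presupposes the conclusion, whereas the clean argument is that the Feigenbaum kneading sequence is not eventually periodic, so the critical point cannot lie in the immediate basin of a periodic orbit, and Singer's theorem then forces every cycle to be repelling.
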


We call ${\mathcal X} =(I, T)$ for $T\in {\mathcal F}_{\infty}$ a Feigenbaum zero topological entropy flow.

\medskip
\begin{theorem}~\label{main2}
Any Feigenbaum zero topological entropy flow $\mathcal{X}$ is {\bf MMA} and {\bf MMLS}.
\end{theorem}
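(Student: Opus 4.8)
The plan is to determine all minimal subsets of $\mathcal{X}=(I,T)$ and then to verify the two defining conditions separately: that each minimal subset carries an {\bf MLS} sub-flow, and that every point of $I$ is mean attracted to one of them. By Theorem~\ref{fei1} the non-wandering set of $T$ is exhausted by the fixed points $\{-1\}$ and $\{\beta\}$, the repelling cycles $Orb(n)$, and the Cantor attractor $\Lambda_T$. Since any minimal set lies in the non-wandering set and cannot properly contain a periodic orbit or a point of $\Lambda_T$ without coinciding with it (using that $T|_{\Lambda_T}$ is minimal), I would first record that the minimal subsets are exactly $\{-1\}$, $\{\beta\}$, the cycles $Orb(n)$ for $n\ge 1$, and $\Lambda_T$.

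For {\bf MMLS} I would argue that each of these minimal sets is in fact equicontinuous, which is stronger than {\bf MLS}, the exceptional set being empty. The singletons and the finite cycles $Orb(n)$ are trivially equicontinuous. For $\Lambda_T$ the key input is the conjugacy announced in the strategy: $T|_{\Lambda_T}$ is topologically conjugate to the dyadic adding machine on the $2$-adic symbolic flow. The adding machine is an isometry of the $2$-adic metric, hence equicontinuous; since equicontinuity of a flow on a compact metric space is a topological-conjugacy invariant and is independent of the choice of compatible metric, it follows that $(\Lambda_T, T|_{\Lambda_T})$ is equicontinuous, hence {\bf MLS}. Thus every minimal sub-flow is {\bf MLS} and $\mathcal{X}$ is {\bf MMLS}.

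For {\bf MMA} I would use the renormalization structure to produce, for each $n\ge 1$, a cycle of $2^n$ pairwise disjoint closed intervals $J_{n,0},\dots,J_{n,2^n-1}$ that are cyclically permuted, $T(J_{n,k})\subseteq J_{n,k+1}$ with indices read modulo $2^n$, that are nested in $n$, and with $\bigcap_n\bigcup_k J_{n,k}=\Lambda_T$. Because $\Lambda_T$ is a Cantor set, the mesh $\delta_n=\max_k\mathrm{diam}(J_{n,k})$ tends to $0$. Points of $P_T$ are eventually periodic, so their $\omega$-limit set is a cycle and the proposition on attraction to periodic cycles gives mean attraction to it. For $x\in I\setminus P_T$ we have $T^m x\to\Lambda_T$, so for each fixed $n$ the orbit is eventually trapped in $\bigcup_k J_{n,k}$ at some time $M$, after which it follows the cyclic itinerary exactly. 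Given $\epsilon>0$, choose $n$ with $\delta_n<\epsilon$, let $J_{n,k_0}$ be the interval containing $T^M x$, and choose $z\in\Lambda_T$ with $T^M z\in J_{n,k_0}$ (possible since $T^M(\Lambda_T)=\Lambda_T$ meets $J_{n,k_0}$); then $T^m x$ and $T^m z$ lie in the same level-$n$ interval for all $m\ge M$, so $d(T^m x,T^m z)\le\delta_n$, the finitely many early terms being harmless. Hence $\limsup_N \tfrac1N\sum_{m=1}^N d(T^m x,T^m z)\le\delta_n<\epsilon$, so $x$ is mean attracted to $\Lambda_T$. This yields the decomposition $I=\bigcup_K \mathrm{Basin}(K)$ and therefore {\bf MMA}.

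The main obstacle is the dynamical input underlying both parts: establishing rigorously the cyclic, nested, mesh-shrinking family of intervals together with the adding-machine conjugacy on $\Lambda_T$, and showing that a non-escaping point eventually realizes the interval itinerary exactly. This is precisely where the existence and hyperbolicity of the period-doubling fixed point and the analysis of the non-escaping set of the induced expanding map from~\cite{JMS} are needed; once those are in hand, the mean-attraction and equicontinuity estimates above are routine.
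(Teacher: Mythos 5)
Your proposal is correct and follows essentially the same route as the paper: identify the minimal sets as the fixed points, the period-$2^n$ cycles, and the Cantor attractor; get {\bf MLS} on $\Lambda_T$ from equicontinuity via the conjugacy to the $2$-adic adding machine (an isometry); and get mean attraction to $\Lambda_T$ from the nested, cyclically permuted level-$n$ intervals with shrinking mesh, choosing $z\in\Lambda_T$ in the same level-$n$ piece as a late iterate of $x$. The paper supplies the technical input you flagged exactly as you anticipated, namely the period-doubling fixed point $G$ and the induced expanding map of \cite{JMS} whose pulled-back partitions $\eta_n$ realize your intervals $J_{n,k}$, with the general case reduced to $G$ by the conjugacy of Theorem~\ref{fei1}.
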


\begin{proof}
We first define the period doubling operator on ${\mathcal F}_{\infty}$ as follows.
Any $T\in {\mathcal F}_{\infty}$  has two fixed points $\alpha =-1$ and $\beta \in (0,1)$. Define
$$
{\mathcal R} (T) = -\beta^{-1} T^{2} (-\beta x), \quad x\in I.
$$
Then ${\mathcal R} (T) \in {\mathcal F}_{\infty}$. Thus
$$
{\mathcal R} : {\mathcal F}_{\infty}\to  {\mathcal F}_{\infty}
$$
defines an operator which is called the period doubling operator.

As we have remarked in the paragraph after Theorem~\ref{fei},
the period doubling operator has a unique fixed point $G\in {\mathcal F}_{\infty}$, that is,
$$
{\mathcal R} (G) =G.
$$
With the existence of this fixed point, we continue our proof as follows.
Consider the critical value $c_{1}= G(0)$ and $c_{2} =G^{2} (0)$, $c_{3} =G^{3}(0)$, $c_{4}=G^{4} (0)$. Then we have that
$$
c_{2} < 0 <c_{4} <c_{3} <c_{1}.
$$
Let $J=[c_{2}, c_{1}]$ and let $J_{0} = [c_{2}, c_{4}]$ and $J_{1}=[c_{3}, c_{1}]$.

\begin{figure}[htp]~\label{fig4}
\centering{
\includegraphics[scale=0.6]{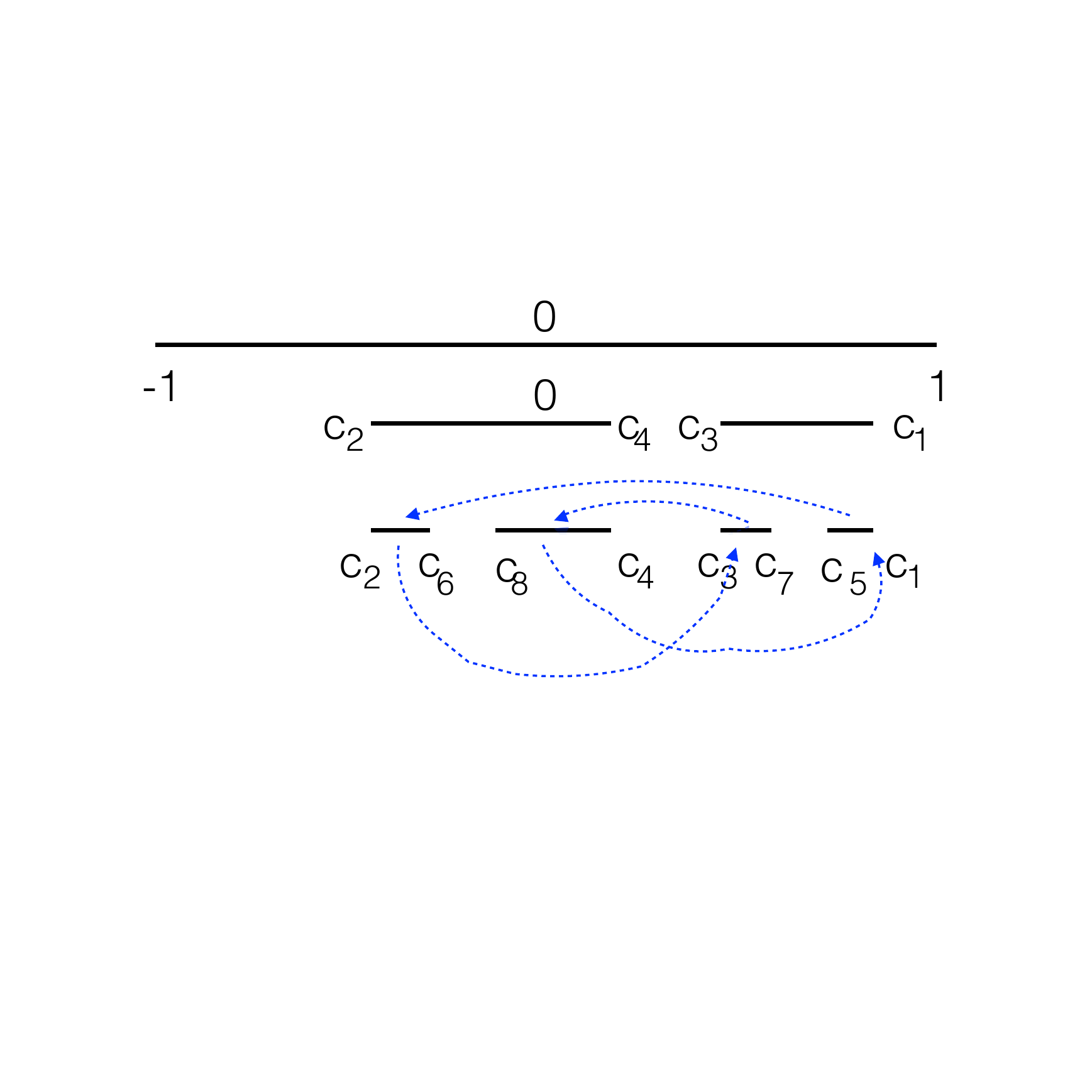}}
\vspace*{-100pt}
\caption{}
\end{figure}
We now define a $C^{3}$ expanding map $E(x)$. Let $\beta \in (0,1)$ be the fixed point of $G$. Define
$$
E(x) =\left\{ \begin{array}{ll}
          -\beta^{-1} x, &x\in J_{0};\cr
          -\beta^{-1} g(x), & x\in J_{1}
                   \end{array}\right.
$$
It has been shown in~\cite{JMS} that $E(x)$ is expanding, that is, $|E'(x)| >1$.
Therefore, the non-escaping set
$$
NE = \cap_{n=0}^{\infty} E^{-n} (J)
$$
is a Cantor set (see~\cite[Chapter 1]{JBook} for the detailed proof of this fact).

Let
$$
PC= \{ c_{n} =G^{n} (0)\}_{n=1}^{\infty}
$$
be the set of all post-critical points for $G$.
Then the attracting Cantor set for $G$ satisfies that
$$
\Lambda_{G} =\overline{PC}.
$$

Since ${\mathcal R}(G)=G$, we can see that endpoints of all intervals in $\cap_{n=0}^{N} E^{-n} (J)$ are contained in $PC$. This implies that
$$
NE = \Lambda_{G}.
$$

Since $E(x)$ is an expanding map of degree $2$, we can conjugate $E|NE$ to the shift map $\sigma$ of the symbolic space (the $2$-adic space in Section 5)
$$
{\mathbb Z}_{2} = \prod_{n=0}^{\infty} \{0, 1\}=\{ w=i_{0}i_{1}\cdots i_{n-2}i_{n-1}\cdots\;|\; i_{n-1} \in \{0, 1\}, \; n=1, 2, \cdots\}
$$
as follows.
Consider the initial partition
$$
\eta_{0} = \{ J_{0}, J_{1}\}.
$$
We can pull back it to get the $n^{th}$-partition
\begin{equation}~\label{part}
\eta_{n} = E^{-n} (\eta_{0}) =\{ J_{w_{n}}\}
\end{equation}
where $w_{n}=i_{0}i_{1}\cdots i_{n-2}i_{n-1}$.
We label intervals in $\eta_{n}$ inductively as that
$$
E(J_{w_{n}}) = J_{\sigma (w_{n})}\in \eta_{n-1},
$$
where $\sigma (w_{n}) = i_{1}\cdots i_{n-2}i_{n-1}$. From this labelling, we have that
$$
\cdots \subset J_{i_{0}i_{1}\cdots i_{n-2}i_{n-1}}\subset J_{i_{0} i_{1}\cdots i_{n-2}}\subset \cdots \subset J_{i_{0}}.
$$
Thus for any $w=i_{0}i_{1}\cdots i_{n-2}i_{n-1} \cdots \in {\mathbb Z}_{2}$, we have that
the set
$$
\cap_{n=1}^{\infty} J_{i_{0}i_{1}\cdots i_{n-1}} =\{ x_{w}\}
$$
contains one point. Set $\pi (w) =x_{w}$.

Suppose the Cantor set $NE$ is a metric space with the metric induced from the Lebesgue metric on $I$
and suppose ${\mathbb Z}_{2}$ is a metric space with the metric defined as
$$
d (w, w') = \frac{1}{2^{n}}
$$
if $w=i_{0}i_{1}\cdots i_{n-1} i_{n}\cdots$ and $w'=i_{0}i_{1}\cdots i_{n-1} i_{n}'\cdots$ and if $i_{n}\not= i_{n}'$.
Then we have that
$$
\pi : {\mathbb Z}_{2} \to NE
$$
is a homeomorphism such that
$$
\pi \circ E = \sigma \circ \pi.
$$

Now we define the add machining $add(w)=w+1$ on ${\mathbb Z}_{2}$ as follows.
For any $w=i_{0}i_{1}\cdots i_{n-1}\cdots$, if $i_{0}=0$, then $add (w) =1i_{1}\cdots i_{n-1}\cdots$ and if $i_{0}=1$, then the first digit of $add (w)$ is $0$ and consider $i_{1} +1$, etc. From the construction of $NE=\Lambda_{G}$, we have that
$$
G(x_{w}) =x_{add (w)}.
$$
This implies that
$$
G (x) = \pi \circ add\circ \pi^{-1} (x)
$$
and, furthermore,
$
G^{n} (x) =\pi \circ add^{n} \circ \pi^{-1} (x)
$
for $x\in NE$.
Since the adding machine $add$ is isometric then equicontinuous and $\pi$ is a conjugacy, we have that $\mathcal{G}=(NE, G)$ is equicontinuous.

Now for any $T\in {\mathcal F}_{\infty}$, we have a homeomorphism $h: I\to I$ such that
$$
T^{n} =h\circ G^{n}\circ h^{-1} \quad \hbox{on $I$}.
$$
Both $h|\Lambda_{G}$ and $h^{-1}|\Lambda_{T}$ are uniformly continuous.
So we have that $\mathcal{T}=(\Lambda_{T}, T)$ is equicontinuous.

Now what we have proved is that $T$ has countably many minimal subsets 
$K_{0}=\{-1\}$, $K_{1}=\{p\}$, $K_{n+1}=Orb(n)$ for all $n\geq 1$, and $K_{\infty}=\Lambda_{T}$.

Since $K_{0}$ or $K_{1}$ contains only one fixed point, it is easy to see that  
$$
\hbox{\rm Basin} (K_{0}) =\{-1,1\}, \quad \hbox{\rm Basin} (K_{1}) =\cup_{m=0}^{\infty} T^{-n} (K_{1}).
$$

Similarly, since $K_{n+1}$ contains only one periodic cycle of period $2^{n}$ for $n\geq 1$, 
$$
\hbox{\rm Basin} (K_{n+1}) =\cup_{m=0}^{\infty} T^{-n} (K_{n+1}), \quad n=1, 2, \cdots.
$$
Now we claim that 
$$
\hbox{\rm Basin} (K_{\infty}) =[-1,1]\setminus \cup_{n=0}^{\infty} \hbox{\rm Basin} (K_{n}).
$$

To prove this claim, we consider $G$ and the induced expanding map $E$. 
Let $\eta_{n}$ be the $n^{th}$-partition in (\ref{part}). Since $E$ is expanding,
we have a constant $0<\tau<1$ such that
$$
\tau_{n} = \max_{J\in \eta_{n}} |J| \leq \tau^{n}, \quad \forall n\geq 1.
$$ 
The map $G$ permutes intervals in $\eta_{n}$ (see Fig. 4).
For any 
$$
x\in [-1,1]\setminus \cup_{n=0}^{\infty} \hbox{\rm Basin}_{G} (K_{n}),
$$ 
we know that $G^{n}(x)\to NE=\Lambda_{G}$ as $n\to \infty$. Note that 
$$
\Lambda_{G}=NE=\cap_{n=1}^{\infty} \cup_{J\in \eta_{n}} J.
$$
Therefore, for any $\epsilon >0$, we have an integer $N'>0$ such that $\tau_{N'}\leq \tau^{N'}<\epsilon$. We also have an integer $N''\geq N'$ such that 
$G^{N''} (x)$ is in some interval $J\in \eta_{N'}$. Take a point $z_{0}\in J\cap \Lambda_{G}$. Then we have that  
$$
|G^{N''}(x) - z_{0}| <\epsilon.
$$
Since $G:\Lambda_{G}\to \Lambda_{G}$ is a homeomorphism, we have a point $z\in J'\cap \Lambda_{G}$ for $J'\in \eta_{N'}$ such that $z_{0}=G^{N''} (z)$ (refer to Fig. 4).
This implies that 
$$
|G^{N''}(x)-G^{N''} (z)|<\epsilon.
$$ 
Since $G$ permutes intervals in $\eta_{N}'$, for any $n\geq N''$, we have that $G^{n}(x)$ and $G^{n}(z)$ are in the same interval in $\eta_{N'}$, 
this further implies that 
$$
|G^{n}(x)-G^{n}(z)| <\epsilon, \quad \forall n\geq N''.
$$ 
Thus we have that
$$
   \limsup_{N\to\infty} \frac{1}{N} \sum_{n=1}^{N} |G^n (x)-G^n (z)| <\epsilon.
$$ 
This implies that 
$$
\hbox{\rm Basin}_{G} (K_{\infty}) =[-1,1]\setminus \cup_{n=0}^{\infty} \hbox{\rm Basin}_{G} (K_{n}).
$$
We proved the claim for $G$.
Since $T^{n} =h\circ G^{n}\circ h^{-1}$ and since both $h$ and $h^{-1}$ are uniformly continuous on $I$,  we proved also the claim for $T$.
 
Now we knew that $T: K_{\infty}\to K_{\infty}$ is minimal and equicontinuous. For each $n\geq 0$,
the map $T: K_{n}\to K_{n}$ is minimal and equicontinuous because it is just a periodic cycle. 
Thus we finally proved that ${\mathcal X}=(I,T)$ is {\bf MMA} and {\bf MMLS}.
 \end{proof}

One consequence of Theorem~\ref{main1} and Theorem~\ref{main2} is the following corollary with the M\"obius function as a special example. 

\medskip
\begin{corollary}~\label{scfeig}
	Any oscillating sequence is linearly disjoint from 
	 all Feigenbaum zero topological entropy flows.
 \end{corollary}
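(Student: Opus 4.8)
The plan is to deduce the statement directly by chaining together the two structural results already established, since the corollary is a formal consequence of Theorem~\ref{main1} and Theorem~\ref{main2}. The key observation is that linear disjointness has already been reduced, in Theorem~\ref{main1}, to membership in the class of {\bf MMA} and {\bf MMLS} flows; so all that remains is to certify that every Feigenbaum zero topological entropy flow belongs to that class, which is precisely the content of Theorem~\ref{main2}.

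Concretely, I would first fix an arbitrary oscillating sequence ${\bf c}=(c_n)$. By the convention recorded in the Remark following Definition~\ref{os}, such a sequence automatically satisfies the growth condition (\ref{growth}), so the hypotheses of Theorem~\ref{main1} are met without any additional assumption. Next, given any Feigenbaum zero topological entropy flow $\mathcal{X}=(I,T)$ with $T\in{\mathcal F}_{\infty}$, Theorem~\ref{main2} guarantees that $\mathcal{X}$ is both {\bf MMA} and {\bf MMLS}. Applying Theorem~\ref{main1} to the pair $({\bf c},\mathcal{X})$ then yields $\lim_{N\to\infty}\frac{1}{N}\sum_{n=1}^{N}c_n f(T^n x)=0$ for every $f\in C(I)$ and every $x\in I$. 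Since every observable in $\mathcal{X}$ is of the form $\xi(n)=f(T^n x)$, this is exactly the linear disjointness (\ref{ld}) of ${\bf c}$ from $\mathcal{X}$, completing the argument.

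There is essentially no obstacle at the level of this corollary: all of the genuine work has already been carried out upstream, namely the oscillation estimate via the spectral lemma in the proof of Theorem~\ref{main1}, and the verification in Theorem~\ref{main2} of the minimal decomposition $X=\bigcup_K \mbox{\rm Basin}(K)$ together with equicontinuity of $T$ on the Cantor attractor $\Lambda_T$ (obtained by conjugating $E|_{NE}$ to the adding machine). The only point I would flag as worth a single sentence is to record that the family $\mathcal{F}_{\infty}$ is nonempty and is the intended target class, which is furnished by Theorem~\ref{fei1}; the M\"obius function is then an explicit special case, since it is an oscillating sequence by Example~\ref{ex4}.
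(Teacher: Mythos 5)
Your proposal is correct and matches the paper exactly: the paper presents Corollary~\ref{scfeig} as an immediate consequence of Theorem~\ref{main1} and Theorem~\ref{main2}, which is precisely the chaining you carry out, including the observation that the growth condition (\ref{growth}) is assumed by convention for all oscillating sequences in the paper. No further commentary is needed.
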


 \medskip
 \begin{remark} 
 	After this paper was completed, we learnt that Karagulyan \cite{Kar} has proved that the M\"obius function is linearly disjoint from  all continuous interval maps with zero topological entropy. It can be proved that any oscillating sequence is linear disjoint from all continuous interval maps with zero topological entropy and the special property of M\"{o}bius function is not needed in  Karagulyan's proof, but only the oscillating property.
 We expect that all continuous interval maps with zero topological entropy are {\bf MMA} and {\bf MMLS}.
 \end{remark} 

 \section{\bf Circle Homeomorphisms}

In this section, we discuss flows of zero topological entropy  on the unit circle which are {\bf MMA} and {\bf MMLS} but not equicontinuous on its minimal set.

Recall that we use $S^{1}=\{ z\in {\mathbb C} \;|\; |z|=1\}$ to denote the unit circle.
Suppose $T: S^1\to S^1$ is a continuous map. Then from~\cite{CMY, Va}, we know that if
${\mathcal X} =(S^{1}, T)$ is equicontinuous, then either
\begin{itemize}
\item[(1)] $\deg (T)=0$ and for every $x\in S^1$, $T^{n}x\to E$ as $n\to \infty$, where $E$ is the fixed point set of $T^{2}$,
\item[(2)] $\deg(T)=1$ and $T$ is conjugate to a rigid rotation $R_{\rho}(z) =e^{2\pi i \rho} z$ for some $0\leq \rho <1$, or
\item[(3)] $\deg(T)=-1$ and $T^{2}$ is the identity.
\end{itemize}
The above three cases are simple and Theorem~\ref{main1} applies.
Moreover, the flow defined by an orientation-preserving circle homeomorphism is equicontinuous if and only if it is conjugate to a rigid rotation.
Therefore, we know that all oscillating sequences are linearly disjoint from all orientation-preserving rational circle homeomorphisms and 
all orientation-preserving irrational circle homeomorphisms which are not Denjoy counter-example.  

We are now concerning that if Theorem~\ref{main1} applies to all Denjoy counter-examples of orientation-preserving irrational circle homeomorphisms.
A Denjoy counter-example is an orientation-preserving circle homeomorphism which is only semi-conjugate to a rigid rotation $R_{\rho}(z)=e^{2\pi i \rho}z$ for an irrational rotation number $0<\rho <1$. That is, we have a continuous onto (but not 1-1) map $h: S^1\to S^1$ such that
\begin{equation}~\label{conj}
h\circ T =R_{\rho}\circ h
\end{equation}
on $S^1$.
Thus  ${\mathcal X} =(S^{1}, T)$ is not equicontinuous according to~\cite{CMY, Va}. Let us first state the Poincar\'e theorem on 
orientation-preserving circle homeomorphisms with irrational rotation numbers.

\medskip
\begin{theorem}[Poincar\'e Theorem]~\label{pt}
Suppose $T: S^1\to S^1$ is an orientation-preserving circle homeomorphism with irrational rotation number $\rho$.
Then there is a continuous onto map $h: S^1\to S^1$ satisfying (\ref{conj}).
Moreover, the $\omega$-limit set $\omega (z)$ for a point $z$ in the circle is either a Cantor set $\Lambda$, which is independent of $z$, or the whole circle $S^{1}$.
In the case $\omega (z)=S^{1}$, $h$ is a homeomorphism. In the case $\omega (z)=\Lambda$, a Cantor set, $S^{1}\setminus \Lambda =\cup J$ is the union of all wandering intervals.
Here $\Lambda$ is also called the non-wandering set for $T$. 
\end{theorem}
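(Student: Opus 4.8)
The plan is to reduce everything to the classical Poincar\'e construction on the universal cover and then read off the dynamical dichotomy from the resulting semiconjugacy. First I would lift $T$ to an orientation-preserving homeomorphism $F:\mathbb{R}\to\mathbb{R}$ commuting with the unit translation, $F(x+1)=F(x)+1$, and define the rotation number $\rho=\lim_{n\to\infty}(F^n(x)-x)/n$; a standard subadditivity argument shows this limit exists and is independent of $x$, and that $\rho$ is well defined modulo $1$. The irrationality of $\rho$ enters at the very start to exclude periodic points: if $F^q(x)=x+p$ for integers $p,q$ with $q\ge 1$, then $\rho=p/q$, a contradiction.

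The combinatorial heart of the argument, and the step I expect to be the main obstacle, is the orbit-ordering lemma: for all integers $m_1,n_1,m_2,n_2$ one has $F^{m_1}(x)+n_1<F^{m_2}(x)+n_2$ if and only if $m_1\rho+n_1<m_2\rho+n_2$, so that the orbit of any point sits on the line in exactly the same order as the orbit of $x\mapsto x+\rho$. I would prove this by reducing it to the single inequality $F^q(x)<x+p \iff q\rho<p$, argued by contradiction: the function $G(x)=F^q(x)-x-p$ is continuous and $1$-periodic, so if $G$ changed sign the intermediate value theorem would produce a periodic point (forcing $\rho$ rational), whereas if $G$ kept a constant sign, iterating $F^q$ would pin $\rho$ on the wrong side of $p/q$. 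Keeping the bookkeeping of this rigidity exact, with all strict inequalities justified by the absence of periodic points, is the delicate part.

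With the ordering lemma in hand I would build the semiconjugacy. Fixing a base point $x_0$, I define a map $H$ on the countable set $\{F^n(x_0)+k:\ n,k\in\mathbb{Z}\}$ by $H(F^n(x_0)+k)=x_0+n\rho+k$; the lemma makes $H$ order-preserving, and by construction $H\circ F=H+\rho$. I then extend $H$ by monotone continuity to the closure of this set and make it locally constant on each complementary gap, producing a continuous nondecreasing degree-one map of the line that intertwines $F$ with translation by $\rho$. This $H$ descends to the circle and yields the continuous surjection $h$ with $h\circ T=R_\rho\circ h$, which is (\ref{conj}).

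Finally I would extract the $\omega$-limit dichotomy from minimality of $R_\rho$. Since $R_\rho$ is minimal for irrational $\rho$, the closure of every $T$-orbit maps onto $S^1$ under $h$, and the restriction of $h$ to any minimal set is a surjection onto $S^1$; from this one deduces that $T$ has a unique minimal set $\Lambda$, which is either all of $S^1$ or a Cantor set (closed, $T$-invariant, perfect and nowhere dense). If $\Lambda=S^1$, then $h$ collapses no arc, hence is injective, so $h$ is a homeomorphism and $\omega(z)=S^1$ for every $z$. Otherwise $S^1\setminus\Lambda$ is a countable union of open arcs, each collapsed by $h$ to a single point; since $h$ intertwines $T$ with the fixed-point-free $R_\rho$, no nonzero iterate of such an arc can meet itself, so each arc is wandering. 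One then checks $\omega(z)=\Lambda$ for every $z$: orbits starting in a gap accumulate only on $\Lambda$, while orbits in $\Lambda$ are dense there by minimality, so $\omega(z)$ is independent of $z$.
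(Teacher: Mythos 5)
The paper does not prove this statement: it is quoted as the classical Poincar\'e classification theorem, so there is no in-paper argument to compare against. Your proposal is the standard textbook proof (lift to the universal cover, orbit-ordering lemma, monotone extension of the map $F^n(x_0)+k\mapsto n\rho+k$ using density of $\{n\rho+k\}$, then the dichotomy via minimality of $R_\rho$), and it is correct in outline; the only places you wave your hands --- uniqueness of the minimal set, and the fact that $h$ collapses every complementary gap of a Cantor minimal set --- both follow from your semiconjugacy by the standard observation that only countably many fibers of $h$ can be nondegenerate arcs, so the elisions are harmless.
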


Denjoy constucted an example $T$ of an orientation-preserving circle homeomorphism with irrational rotation number such that for any $z \in S^{1}$, 
$\omega (z)=\Lambda$ is a Cantor set. In this case, $T: \Lambda \to \Lambda$ is minimal. We call such a map a Denjoy counter-example. 

Associated to a Denjoy counter-example, there are two flows ${\mathcal X}=(S^{1}, T)$ and ${\mathcal A}=(\Lambda, T)$. We have that the following theorem.

\medskip
\begin{theorem}~\label{ceq}
Suppose $T: S^{1}\to S^{1}$ is a Denjoy counter-example. The flow $\mathcal{X}= (S^{1}, T)$ is {\bf MMA} but the flow $\mathcal{A} =(\Lambda, T)$ is not equicontinuous.
However, the flow $\mathcal{A} =(\Lambda, T)$ is {\bf MLS}, thus, the flow $\mathcal{X}= (S^{1}, T)$ is {\bf MMA} and {\bf MMLS}.
 \end{theorem}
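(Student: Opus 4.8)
The plan is to prove the three assertions in turn, the only substantial one being that $\mathcal A=(\Lambda,T)$ is {\bf MLS}; the other two are comparatively soft. I would first record that $\Lambda$ is the \emph{unique} minimal subset of $(S^1,T)$: by Theorem~\ref{pt} every point $x$ has $\omega(x)=\Lambda$ with $T|_\Lambda$ minimal, so any minimal set $K$ contains some $\omega(x)=\Lambda$ and hence equals $\Lambda$. To see that $\mathcal X=(S^1,T)$ is {\bf MMA} it then suffices to show every point is mean attracted to $\Lambda$. For $x\in\Lambda$ take $z=x$; for $x$ lying in a wandering interval (gap) $J$, let $z$ be an endpoint of $J$, which lies in $\Lambda$. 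The gaps $\{T^nJ\}_{n\in\mathbb{Z}}$ being pairwise disjoint, $\sum_n|T^nJ|\le 1$, so $|T^nJ|\to0$; since $T^nx$ and $T^nz$ lie in the same closed arc $\overline{T^nJ}$ we get $d(T^nx,T^nz)\le|T^nJ|\to0$, whence $\frac1N\sum_{n=1}^N d(T^nx,T^nz)\to0$. Thus $S^1=\mathrm{Basin}(\Lambda)$ and $\mathcal X$ is {\bf MMA}.

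For the non-equicontinuity of $\mathcal A=(\Lambda,T)$ I would argue by contradiction. If $(\Lambda,T)$ were equicontinuous then, being minimal, it would be conjugate to a minimal rotation and act isometrically for a compatible metric; in particular $\{T^n\}_{n\in\mathbb{Z}}$ would be equicontinuous in both directions. Fix a gap $J_0=(a,b)$ with $a,b\in\Lambda$ and length $\ell_0>0$, and let $\delta$ be the modulus from equicontinuity for $\epsilon=\ell_0/2$. Since $|T^mJ_0|\to0$, choose $m$ with $d(T^ma,T^mb)<\delta$; then $u=T^ma$ and $v=T^mb$ lie in $\Lambda$ with $d(u,v)<\delta$, whereas $d(T^{-m}u,T^{-m}v)=d(a,b)=\ell_0>\epsilon$, a contradiction.

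The heart is {\bf MLS} for $(\Lambda,T)$, which I would reach in two stages. Stage one is a reduction: every Denjoy counter-example is topologically conjugate to one whose non-wandering set has zero Lebesgue measure. Writing $s=\sum_J|J|\le1$ for the total gap length, define $\phi$ by $\phi(x)-\phi(0)=\tfrac1s\,\mathrm{Leb}\big(\text{gaps in }[0,x]\big)$. Because $\Lambda$ is nowhere dense every arc contains a gap, so $\phi$ is a strictly increasing homeomorphism of $S^1$; it rescales the gaps to total length $1$, forcing $\widetilde\Lambda=\phi(\Lambda)$ to have Lebesgue measure zero, and $\widetilde T=\phi T\phi^{-1}$ is again a Denjoy counter-example. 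Since {\bf MLS} is equivalent to mean-equicontinuity (Proposition~\ref{eq}), which is a topological conjugacy invariant on a compact space — the uniform continuity of $\phi^{\pm1}$ transfers the averaged estimate (\ref{MEC}) after splitting the Birkhoff sum into the few indices where $d(T^kx,T^ky)$ is large and the rest — it suffices to prove {\bf MLS} for $(\widetilde\Lambda,\widetilde T)$.

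Stage two treats the measure-zero case and is where I expect the real work. Assuming $\mathrm{Leb}(\Lambda)=0$, for $x,y\in\Lambda$ the arc $[x,y]$ meets $\Lambda$ in a null set, so its length equals the total length of the gaps it contains; writing $A$ for the union of those gaps and using that $T^k$ carries gaps to gaps, $d(T^kx,T^ky)=\mathrm{Leb}(T^kA)$. Hence $\frac1n\sum_{k=0}^{n-1}d(T^kx,T^ky)=\int_{S^1}\frac1n\sum_{k=0}^{n-1}\mathbf 1_{A}(T^{-k}z)\,dz$. Now $(S^1,T)$ is uniquely ergodic with unique invariant measure $\mu$ supported on $\Lambda$, and $A$ is an open $\mu$-continuity set with $\mu(A)=0$ (its boundary lies in the countable, non-atomic set of gap endpoints). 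Unique ergodicity of $T$ (equivalently of $T^{-1}$) then forces the Birkhoff averages of $\mathbf 1_A$ to converge uniformly to $\mu(A)=0$, so by bounded convergence $\frac1n\sum_{k=0}^{n-1}d(T^kx,T^ky)\to0$ for \emph{every} pair $x,y\in\Lambda$; this is far stronger than required, so any $\delta$ works and $(\Lambda,T)$ is mean-equicontinuous, i.e.\ {\bf MLS}. Together with {\bf MMA} and the uniqueness of the minimal set, this yields that $\mathcal X$ is {\bf MMA} and {\bf MMLS}. The main obstacle is precisely Stage two: the identity $d(T^kx,T^ky)=\mathrm{Leb}(T^kA)$ genuinely needs $\mathrm{Leb}(\Lambda)=0$ (which is why the conjugacy reduction is indispensable), and the passage from unique ergodicity to \emph{uniform} convergence of the Birkhoff averages of the discontinuous indicator $\mathbf 1_A$ must be carried out by sandwiching $\mathbf 1_A$ between continuous functions close in $L^1(\mu)$.
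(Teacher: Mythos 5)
Your arguments for {\bf MMA}, for the non-equicontinuity of $(\Lambda,T)$, and your reduction to the case $\mathrm{Leb}(\Lambda)=0$ (which is more explicit than the paper's) are all sound. The genuine gap is in Stage two. You assert that $A$, the union of the gaps contained in $(x,y)$, is a $\mu$-continuity set because ``its boundary lies in the countable set of gap endpoints.'' This is false: since $\Lambda$ is nowhere dense, the gaps contained in $(x,y)$ are dense in $[x,y]$, so $\overline{A}=[x,y]$ and $\partial A=[x,y]\cap\Lambda$, which has $\mu$-measure $|h(x)-h(y)|>0$ (here $h$ is the semiconjugacy to $R_\rho$ and $\mu=h^{*}{\rm Leb}$). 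Unique ergodicity therefore does \emph{not} give uniform convergence of the Birkhoff averages of $\mathbf{1}_A$ to $\mu(A)=0$: any continuous majorant of $\mathbf{1}_A$ is $\ge 1$ on $\overline{A}$, so the sandwiching argument only yields the upper bound $\mu(\overline{A})=|h(x)-h(y)|$. Indeed your conclusion that $\frac1n\sum_{k=0}^{n-1}d(T^kx,T^ky)\to 0$ for \emph{every} pair $x,y\in\Lambda$ is not just unproved but false: summing gap by gap and using equidistribution of each collapsed gap orbit under $R_\rho$ gives $\lim_n\frac1n\sum_{k=0}^{n-1}\mathrm{Leb}(T^kA)=|h(x)-h(y)|$, and one checks (e.g.\ by noting that a fixed gap $I_{0,0}$ lies in $[T^kx,T^ky]$ for a set of $k$ of density $|h(x)-h(y)|>0$ while, when $|h(x)-h(y)|$ is small, the complementary arc always contains $I_{0,0}$ or $I_{0,1}$) that the averages of $d(T^kx,T^ky)$ itself stay bounded away from $0$ whenever $h(x)\ne h(y)$. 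This is exactly the mechanism behind the non-equicontinuity computation you and the paper both carry out, so the two halves of your proof are in tension.

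What is true, and what mean-equicontinuity actually requires, is that the limit is \emph{small when $d(x,y)$ is small}, not that it vanishes. The paper's proof makes this quantitative: it fixes finitely many orbit classes and translates of gaps carrying all but $\epsilon$ of the total length, shows that the set $E$ of times $k$ at which $[T^kx,T^ky]$ contains one of these large gaps has upper density a fixed multiple of $|h(x)-h(y)|$ (hence $<\epsilon$ once $d(x,y)<\delta$, by uniform continuity of $h$), and observes that for $k\notin E$ the arc $[T^kx,T^ky]$ contains only small gaps, whence $|T^kx-T^ky|<\epsilon$ because $\mathrm{Leb}(\Lambda)=0$. Your framework can be repaired along the same lines by replacing ``converges uniformly to $\mu(A)=0$'' with ``is eventually bounded above by $\mu(\overline{A})=|h(x)-h(y)|$'' and then running the $\epsilon$--$\delta$ argument; but as written the key step of Stage two is wrong.
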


\begin{proof}
For any $x, y\in \Lambda$ close to each other, let $[x,y]$ be the shortest arc with endpoints $[x, y]$ in the circle. The arc length of $[x,y]$ is $|x-y|$. Assume the total length of $S^{1}$ is $1$. 

Since $T$ is a Denjoy counter-example,  $\Lambda$ is a Cantor set and 
$$
S^{1}\setminus \Lambda=\cup I
$$ 
is the union of all wondering intervals. 
We have that 
$$
\sum_{I} |I| \leq 1.
$$

For any $x\in S^{1}\setminus \Lambda$, it is in a wandering interval $I=(a,b)$, where $a, b\in \Lambda$. Let $I_{n} =T^{n}(I)$, $n=0, 1, \cdots$. Then we have that 
$$
\sum_{n=0}^{\infty} |I_{n}| \leq 1.
$$
Thus $|I_{n}|\to 0$ as $n\to \infty$.
Since $T^{n}(x) \in I_n$ and $T^{n}(a)\in \overline{I}_{n}$, we have that
$$
\lim_{n\to +\infty} |T^{n} (a)- T^{n}(x)| = \lim_{n\to +\infty}
|I_n| = 0,  
$$
%This implies that
%$$
%\limsup_{n\to \infty} \frac{1}{N} \sum_{n=1}^{N} |T^{n} (a)- %T^{n}(x)|=0.
%$$
which implies that $\mathcal{X}=(S^{1}, T)$ is {\bf MMA}.

Consider the set ${\mathcal W}$ of all wandering intervals, which  is countable. We partition it into  full orbits 
$$
{\mathcal W} =\{ I_{m,n}\; |\; I_{m, n} =T^{n} (I_{m, 0}),\;\; 0\leq m< L,\; n\in {\mathbb Z}\},
$$  
where $L$ is the number of full orbits of wandering intervals. Here
$L$ is either a positive integer or $\infty$. Since $\Lambda$ is a Cantor set, we have
$$
S^{1} =\overline{\bigcup_{0\le m < L} \bigcup_{n\in {\mathbb Z}} I_{m,n}}
$$ 

Let $h$ be the semi-conjugacy between $T$ and the rigid rotation $R_{\rho}$. See  (\ref{conj}). 
Then the image $ h(I_{m,n})$ is a singleton $\{x_{m,n}\}$
 and we write $x_{m, n}=h(I_{m, n})$. By (\ref{conj}),
\begin{equation}\label{hh1}
\{ x_{m,n} =h(I_{m, n}) = R^{n}_{\rho} (x_{m,0}) \;|\; n\in {\mathbb Z}\}
\end{equation}  
is the full orbit of $x_{m, 0}$ under iterations of $R_{\rho}$ for any fixed $0\leq m<L$.

Also notice that  $h$ is  monotone increasing. So,  if $x$ and $y$ are two points in the Cantor set $\Lambda$， we have
\begin{equation}\label{hh2}
	 I_{m,n} \subset [x, y]     \quad\mbox{\rm  iff}\quad   x_{m,n} \in [h(x), h(y)]. 
\end{equation}      

The rigid rotation $R_{\rho}$ is uniquely ergodic 
with the Lebesgue measure as the unique invariant measure.
This fact, together with (\ref{hh1}),
 implies that for any $z, w\in S^{1}$, 
\begin{equation}\label{hh3}
\lim_{n\to \infty} \frac{\#(\{ k\;|\; x_{m, k}\in [z, w], \; -n \leq k\leq 0\})}{n} = |z-w|.
\end{equation}
This allows us to prove that ${\mathcal A}=(\Lambda, T)$ is not equicontinuous. In fact, 
given any $x\not= y\in \Lambda$, by (\ref{hh2}) and (\ref{hh3})
we have that 
\begin{eqnarray*}
& & \lim_{n\to \infty} \frac{\#(\{ k\;|\; I_{0, k}\subset [x, y], \; -n \leq k\leq 0\})}{n} \\
&=&
\lim_{n\to \infty} \frac{\#(\{ k\;|\; x_{0,k}\in [h(x), h(y)], \; -n \leq k\leq 0\})}{n} = |h(x)-h(y)| >0.
\end{eqnarray*}
Thus there are infinitely many  negative integers $k$ such that $I_{0,k}\subset [x,y]$. For such $k$'s we have 
$$
I_{0,0} =T^{|k|} (I_{0, k}) \subset [T^{|k|}(x), T^{|k|}(y)]
$$
so that 
$$
|T^{|k|}(x)-T^{|k|}(y)|\geq |I_{0}|.
$$
for infinitely many positive integers $|k|$. Therefore,  ${\mathcal A} =(\Lambda, T)$ is not equicontinuous.

Next we prove that ${\mathcal A} =(\Lambda, T)$ is {\bf MLS}.
We first assume that the non-wandering set $\Lambda$ has zero Lebesgue measure.
Under this assumption, we have that 
$$
\sum_{0\le m< L} \sum_{n\in {\mathbb Z}} |I_{m,n}| =1.
$$
Moreover, for any two points $x\not= y\in \Lambda$,
\begin{equation}~\label{zeq}
|x-y| =\sum_{I\in {\mathcal W}, I\subset [x,y]} |I|.
\end{equation}

For any $N\ge 1$, consider the rectangle
$$
      R_N=\{ (m, n)\in \mathbb{N}\times \mathbb{Z}\; |\;  0\le m \le N \wedge (L-1), |n|\le N\},
$$
where $N\wedge (L-1)= min\{ N, L-1\}$.
For any $\epsilon >0$, we have a large integer $N_{0}\ge 1$ such that 
$$
\sum_{(m, n )\in \mathbb{N}\times \mathbb{Z} \setminus R_{N_0}} |I_{m,n}| < \epsilon.
$$
Since $h$ is uniformly continuous, we can find a $0< \delta\leq \epsilon$ such that for any $x\not= y\in \Lambda$ with $|x-y|<\delta$,
$$
0< |h(x)-h(y)| < \frac{\epsilon}{(N_{0}\wedge (L-1)+1)(2N_{0}+1)}.
$$ 
Let 
$$
E=\bigcup_{(m, l)\in R_{N_0}} E_{m,l}
$$
where
$$
E_{m,l}=\{0\leq k<\infty \;|\; I_{m,-k+l} \subset [x,y]\}.
$$
For any fixed $(m, l)\in R_{N_0}$,  we have
\begin{eqnarray*}
	\lim_{n\to \infty} \frac{\#(E_{m,l}\cap [1,n])}{n} 
	& = &
        \lim_{n\to \infty} \frac{\#(\{ k\;|\; I_{m, k+l}\subset [x, y], \; -n \leq k\leq 0\})}{n} \\
&= &  
        \lim_{n\to \infty} \frac{\#(\{ k\;|\; x_{m,k+l}\in [h(x), h(y)], \; -n \leq k\leq 0\})}{n} 
\\
& = &  |h(x)-h(y)| >0.
\end{eqnarray*}
Therefore
the  density of $E$ exists and  is equal to  
$$
\overline{D} (E)= \limsup_{n\to \infty} \frac{\#(E\cap [1,n])}{n} = (N_{0}\wedge(L-1)+1) (2N_{0}+1) |h(x)-h(y)| < \epsilon
$$
For any $n\not\in E$, the interval $[T^{n}(x), T^{n}(y)]$ contains only wandering intervals from
$$
{\mathcal W}_{N_{0}} =\{ I_{m,n} =T^{n} (I_{m, 0})\; |\;  (m,n)\in \mathbb{N}\times \mathbb{Z} \setminus R_{N_0}\}.
$$ 
From (\ref{zeq}),  we have that
$$
|T^{n}(x)-T^{n}(y)| \leq \sum_{(m,n) \in \mathbb{N}\times \mathbb{Z} \setminus R_{N_0}} |I_{m,n}| <\epsilon.
$$
This implies that ${\mathcal A}=(\Lambda, T)$ is {\bf MLS}. Thus it is mean-equicontinuous.
Since the flow ${\mathcal X} =(S^{1}, T)$ has only one mean minimal attractor $\Lambda$, so it is {\bf MMLS}. 

When the non-wandering set $\Lambda$ has a positive Lebesgue measure, there is another Denjoy counter-example $\widetilde{T}$ whose non-wandering set $\widetilde{\Lambda}$ has zero Lebesgue measure such that both of them are semi-conjugate to the same rigidity rotation $R_{\rho}$. That is, we have two continuous onto maps $h, \widetilde{h}: {\mathbb T}\to {\mathbb T}$ such that 
$$
h\circ T= R_{\rho}\circ h \quad \hbox{and}\quad \widetilde{h}\circ \widetilde{T}= R_{\rho} \circ \widetilde{h}.
$$
Now we can define
$$
\widehat{h}  : {\mathbb T}\to {\mathbb T}
$$  
which formally can be written as $\widetilde{h}^{-1} \circ h$. It maps every wandering interval to the corresponding wandering interval bijectively and preserves orders of wandering intervals. And it maps $\Lambda$ to $\widetilde{\Lambda}$ bijectively. Thus it is a homeomorphism of the circle such that
$$
\widehat{h} \circ T = \widetilde{T} \circ \widehat{h}.
$$ 
Since $\widetilde{T}$ is {\bf MMLS} and since $T$ is topologically conjugate to $\widetilde{T}$, we get that $T$ is {\bf MMLS}.
\end{proof}

One consequence of Theorem~\ref{main1} and Theorem~\ref{main2} and Theorem~\ref{ceq} is the following corollary with the M\"obius function as a special example. 

\medskip
\begin{corollary}~\label{scdenjoy}
Any oscillating sequence satisfying the growth condition (\ref{growth}) is linearly disjoint from any Denjoy counter-example. 
%such that its non-wandering set $\Lambda$ is a Cantor set with %zero Lebesgue measure. 
\end{corollary}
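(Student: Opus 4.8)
The plan is to obtain this corollary as an immediate consequence of combining the general disjointness result, Theorem~\ref{main1}, with the structural result for Denjoy counter-examples, Theorem~\ref{ceq}. Since all the substantive work has already been carried out in proving those two theorems, the argument here is purely a matter of assembling them, and I expect no genuine obstacle at this stage.

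First I would fix a Denjoy counter-example $T: S^{1}\to S^{1}$ and consider the associated flow $\mathcal{X}=(S^{1}, T)$, where $S^{1}$ is compact and $T$ is a homeomorphism, so that $\mathcal{X}$ is indeed a flow in the sense used throughout the paper. By Theorem~\ref{ceq}, this flow is both {\bf MMA} and {\bf MMLS}: the {\bf MMA} property comes from the fact that every point in a wandering interval is mean attracted to the unique minimal set $\Lambda$, the lengths of the forward iterates $I_{n}$ of a wandering interval being summable and hence tending to zero; while the {\bf MMLS} property is established first under the hypothesis that $\Lambda$ has zero Lebesgue measure and then reduced to that case in general by conjugating, via an order-preserving circle homeomorphism $\widehat{h}$, to a Denjoy model whose non-wandering set is Lebesgue-null.

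Next I would invoke Theorem~\ref{main1}, which asserts that any oscillating sequence ${\bf c}=(c_{n})$ satisfying the growth condition (\ref{growth}) is linearly disjoint from every {\bf MMA} and {\bf MMLS} flow. Applying this to $\mathcal{X}=(S^{1}, T)$ yields that for any continuous $f\in C(S^{1})$ and any $x\in S^{1}$,
$$
\lim_{N\to\infty} \frac{1}{N} \sum_{n=1}^{N} c_{n} f(T^{n} x) = 0,
$$
which is precisely the statement that ${\bf c}$ is linearly disjoint from the flow defined by the Denjoy counter-example. This completes the proof.

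Since the result is a direct corollary, the only point deserving emphasis is that the genuine difficulty was hidden in Theorem~\ref{ceq}, specifically in verifying the {\bf MMLS} (equivalently {\bf MLS}) condition on the minimal Cantor set $\Lambda$; the reduction to the zero-measure case and the transfer of the {\bf MLS} property along the conjugacy $\widehat{h}$ between the two Denjoy examples is the technical heart of the matter, whereas the passage from the {\bf MMA}/{\bf MMLS} structure to linear disjointness is then automatic via Theorem~\ref{main1}. In particular, taking ${\bf c}=(\mu(n))$ recovers Karagulyan's theorem that the M\"obius function is linearly disjoint from all Denjoy counter-examples as the special case noted in Corollary~\ref{Kar}.
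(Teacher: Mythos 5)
Your proposal is correct and follows exactly the route the paper intends: Theorem~\ref{ceq} shows the Denjoy flow $(S^{1},T)$ is \textbf{MMA} and \textbf{MMLS}, and Theorem~\ref{main1} then gives linear disjointness for any oscillating sequence satisfying the growth condition (\ref{growth}). Nothing further is needed.
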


In particular, we have 
\medskip
\begin{corollary}[Karagulyan~\cite{Kar}]~\label{Kar}
The M\"obius function is linearly disjoint  from all orientation-preserving circle homeomorphisms.  
\end{corollary}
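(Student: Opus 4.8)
The plan is to deduce the corollary from Theorem~\ref{main1} by showing that \emph{every} orientation-preserving circle homeomorphism $T$ generates a flow that is both {\bf MMA} and {\bf MMLS}, and then to recall that the M\"obius function is an oscillating sequence (Example~\ref{ex4}) satisfying the growth condition (\ref{growth})---indeed $\sum_{n\le N}|\mu(n)|^{\lambda}\le N$ for every $\lambda>1$ since $|\mu(n)|\le 1$. The argument splits according to the Poincar\'e rotation number $\rho$ of $T$, and the resulting cases are exhaustive: $\rho$ is either rational or irrational, and in the irrational case Poincar\'e's theorem (Theorem~\ref{pt}) forces $\omega(z)$ to be either all of $S^1$ or a Cantor set.

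First I would dispose of the irrational cases, which are essentially already in hand. If $\omega(z)=S^1$, then the semi-conjugacy $h$ of Theorem~\ref{pt} is a homeomorphism, so $T=h^{-1}R_\rho h$; since conjugacy by a homeomorphism of the compact circle preserves equicontinuity, the flow is equicontinuous and Corollary~\ref{M2} shows it is {\bf MMA} and {\bf MMLS}. If instead $\omega(z)=\Lambda$ is a Cantor set, then $T$ is a Denjoy counter-example, and Theorem~\ref{ceq} already establishes that the flow is {\bf MMA} and {\bf MMLS} (equivalently, Corollary~\ref{scdenjoy} applies directly to give the disjointness).

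The remaining case $\rho=p/q$ rational is where the substantive work lies, since such a $T$ need not be equicontinuous. Here $T$ possesses periodic points, all of period $q$, and the minimal subsets of the flow are exactly these periodic orbits. Each orbit is a finite set on which $T$ acts as a cyclic permutation, hence is trivially equicontinuous and therefore {\bf MLS}, which yields the {\bf MMLS} property. For {\bf MMA} I would invoke the classical structure of rational circle homeomorphisms: passing to $S=T^{q}$, an orientation-preserving homeomorphism with fixed points, every orbit is monotonically trapped in an arc bounded by two consecutive fixed points and converges to an endpoint, so $\omega(x)$ under $T$ is a single periodic cycle for every $x$. Convergence to a periodic cycle upgrades to mean attraction to that cycle by the earlier proposition on periodic cycles, so $S^1=\bigcup_{K}\mathrm{Basin}(K)$ and the flow is {\bf MMA}. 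Applying Theorem~\ref{main1} in each of the three cases then gives linear disjointness of $\mu$ from $T$.

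I expect the rational-rotation-number case to be the main obstacle, specifically the verification of mean attraction to the periodic orbits. The subtlety is that topological attraction $d(T^{n}x,K)\to 0$ does not in general imply mean attraction in the sense of (\ref{MA}); what rescues us is that here $K$ is a finite cycle, so the proposition on periodic cycles converts convergence into the stronger Ces\`aro estimate. One should also check carefully that the union of basins exhausts $S^1$, i.e.\ that no point escapes attraction to a periodic orbit, which again follows from the monotone trapping of orbits between consecutive periodic points of $T^{q}$.
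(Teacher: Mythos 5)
Your proposal is correct and follows essentially the same route as the paper: the trichotomy by rotation number, with the conjugate-to-rotation case handled by equicontinuity (Corollary~\ref{M2}), the Denjoy case by Theorem~\ref{ceq}, and the growth condition for $\mu$ being trivial since $|\mu(n)|\le 1$. The only difference is that you spell out the rational-rotation-number case (minimal sets are periodic orbits, monotone trapping under $T^q$, and the periodic-cycle proposition upgrading convergence to mean attraction), which the paper merely asserts with ``Theorem~\ref{main1} applies''---a worthwhile elaboration, but not a different argument.
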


\medskip

 \end{document}